\documentclass[12pt]{article}

\usepackage[T1]{fontenc}
\usepackage[english]{babel}
\usepackage{amsfonts}
\usepackage{amssymb}
\usepackage{amsmath}
\usepackage{amsthm}
\usepackage{enumitem}

\usepackage{tikz}
\usetikzlibrary{decorations.pathreplacing,calligraphy}
\usetikzlibrary{decorations.markings}
\usetikzlibrary{patterns}
\usepackage{xfrac}
\usepackage{minitoc}

\usepackage{makecell}
\usepackage{graphicx}
\usepackage{caption} 
\usepackage{subcaption}
\usepackage{wallpaper}

\usepackage{dsfont}

\graphicspath{ {figures/} }

\usepackage{fancyhdr}
\usepackage{hyperref}
\usepackage{nomencl}
\usepackage{url}

\usepackage{cancel}
\usepackage{stmaryrd}

\usepackage{float}
\usepackage{autonum}

\usepackage[margin=2.5cm]{geometry} 

\usepackage{array}

\newlength{\oldparindent}
\theoremstyle{plain}
\newtheorem{theorem}{Theorem}[section]

\newtheorem{lemma}[theorem]{Lemma}
\newtheorem{prop}[theorem]{Proposition}

\theoremstyle{definition}
\newtheorem{definition}[theorem]{Definition}

\newtheorem{remark}[theorem]{Remark}

\newcommand{\sethyptag}[2]{\expandafter\gdef\csname hyptag@#1\endcsname{#2}}
\newcommand{\gethyptag}[1]{\csname hyptag@#1\endcsname}

\newcommand{\Hyp}[2]{%
  \par\vspace{0.7em}%
  \sethyptag{#2}{#1}%
  \hypertarget{hyp:#2}{}%
  \noindent\textbf{Hypothesis} {\normalfont(\gethyptag{#2}).}\hspace{0.5em}%
}

\newcommand{\hypref}[1]{\hyperlink{hyp:#1}{\textup{(\gethyptag{#1})}}} 

\newcommand{\vd}{\,\mathrm{d}}

\newcommand{\RR}{\mathbb{R}}
\newcommand{\PP}{\mathbb{P}}

\newcommand{\ind}[1]{\mathbf{1}_{#1}}

\newcommand{\sgn}{\operatorname{sgn}}

\usepackage[numbers]{natbib}
\title{Existence and uniqueness for \\
singular stochastic differential equations \\
with piecewise well-behaved coefficients} 

\author{  
Sara Mazzonetto\thanks{Université de Lorraine, CNRS, Inria, IECL, F-54000 Nancy, France, sara.mazzonetto@univ-lorraine.fr}
        \and
        Benoît Nieto\thanks{De Vinci Higher Education, De Vinci Research Center, Paris, France.
ESILV, 92916 Paris La Défense, France, benoit.nieto@devinci.fr}}            

\begin{document}

\maketitle

\noindent \textbf{Abstract.} 
We study existence and uniqueness for one-dimensional generalized stochastic differential equations with singular coefficients, including distributional drift and degenerate, possibly discontinuous, diffusion coefficients. 
Such singularities naturally encode changes in the dynamics at thresholds, including reflecting, skew, or sticky interface behavior.

We develop two directions. We provide sufficient conditions for pathwise uniqueness, under weak existence and uniqueness in law, without assuming uniform ellipticity or continuity of the diffusion coefficient. 
We also investigate a pasting approach for generalized stochastic differential equations that transfers strong existence and pathwise uniqueness, as well as weak existence and uniqueness in law, from local component equations to a global solution. To the best of our knowledge, this provides the first explicit pasting theorem yielding pathwise uniqueness in the setting of generalized stochastic differential equations.

As an application, we establish the first existence and uniqueness results for a class of skew sticky threshold Cox-Ingersoll-Ross-type diffusions, including the threshold Chan-Karolyi-Longstaff-Sanders process.

\medskip
\noindent \textbf {Keywords:} Stochastic differential equations; pathwise uniqueness; degenerate diffusion coefficient; local time; singular drift; skew diffusions; sticky diffusions; threshold diffusions; CIR; CKLS.

\medskip

\noindent \textbf {MSC 2020:} 
    primary: 
		60H10 
        60J60; 
    secondary: 
        60J55 
        60J65 

\section{Introduction}

Stochastic differential equations (SDEs) driven by Brownian motion play a central role in many applications. 
A fundamental question is whether solutions exist and, if so, whether uniqueness in law or pathwise uniqueness holds. 
Early foundational work established existence and uniqueness results for various classes of SDEs, and subsequent research has strengthened these results, providing a solid basis for applications in areas such as quantitative finance, mathematical physics, and mathematical biology.

The case of SDEs with singular coefficients or distributional drift is considerably less explored, even in the one-dimensional setting. 
Nevertheless, such equations arise naturally when modeling diffusions that cross interfaces which locally perturb the dynamics by slowing them down, partially reflecting, or by altering them. 
Examples of such SDEs include the regime changing threshold diffusion models such as the threshold Cox-Ingersoll-Ross process (T-CIR)~\cite{decamps2006self,dong_wong_2015,zhang2024bond} (different CIR processes on different intervals), which can be seen as a special case of the threshold Chan-Karolyi-Longstaff-Sanders (T-CKLS) model~\cite{mazzonetto2024parameters}, as well as sticky-skew CIR-type processes~\cite{zhang2024hitting}.
The question of existence and uniqueness for such models is not covered by the classical SDE theory. 

In this paper, we investigate existence and uniqueness for one-dimensional generalized SDEs with distributional drift and measurable diffusion coefficients, allowing for both discontinuities and failure of uniform ellipticity. This class includes the models mentioned above. More precisely, we consider generalized SDEs of the form
\begin{equation}
    X_t
    = X_0
    + \int_{\mathbb{R}} L^{x}_t(X)\,\nu(\!\vd x)
    + \int_0^t b(X_s)\,\vd s
    + \int_0^t \sigma(X_s)\,\vd B_s,
    \qquad t \geq 0,
    \label{eqn:SDE_general}
\end{equation}
where $B$ is a one-dimensional Brownian motion, $b$ and $\sigma$ are measurable functions, $\nu$ is a signed measure singular with respect to the Lebesgue measure, and $L^x_t(X)$ denotes the symmetric local time of the process.
The coefficients $b$ and $\sigma$ are allowed to be \emph{discontinuous}, and the diffusion coefficient $\sigma$ is not assumed to satisfy a uniform \emph{ellipticity condition}, that is, $\sigma$ is not bounded from below by a strictly positive constant, as for CIR-type models.
The distributional drift term includes skew diffusions as a special case, corresponding to $\nu(\!\vd x)=\beta\,\delta_\theta(\!\vd x)$ for some $\theta\in\mathbb{R}$ and $\beta\in(-1,1)$. The case $\beta=1$ or $\beta=-1$ corresponds to positive and negative reflection.

Among the works that have addressed existence and uniqueness questions for SDEs with irregular coefficients, let us mention~\cite{bass_chen_05,bass1987uniqueness,bleiengelbert2014, engel_schm_89_3,le2006one}. These works allow for distributional drift $\nu\not\equiv0$ or possibly discontinuous (finite-jump) diffusion coefficients. 
In all these contributions, except for~\cite{bleiengelbert2014,engel_schm_89_3}, a uniform ellipticity condition on the diffusion coefficient is assumed.
In the latter references the diffusion coefficients may have vanish at some $\theta\in \mathbb{R}$ (i.e.~$\sigma(\theta)=0$), possibly leading to stickiness phenomena. Nevertheless, these results do not apply to the T-CIR model, because they provide existence and uniqueness result up to the first time the process hits $0$.

One of the motivations of our work is to provide a general framework for SDEs whose coefficients are \emph{piecewise well-behaved} but the diffusion coefficient is not uniformly elliptic. 
By \emph{piecewise well-behaved}, we mean that the coefficients coincide locally with those of other SDEs which satisfy existing (for instance, classical) existence and uniqueness results. 
Although pasting or piecing-out (construct a Markov process from killed ones)  techniques for Markov processes have been studied for a long time (see~\cite{ikeda1966construction,kopytko2009problem,kopytko2006analytical,lamberton2013optimal,meyer1975renaissance,mota2014continuous,nagasawa1976note,Werner_pasting}), we are not aware of works explicitly constructing a strong solution via pasting strong solutions of singular and non-singular SDEs and proving pathwise uniqueness through such techniques. 

Note that strong existence and uniqueness for two SDEs do not automatically guarantee global strong existence or pathwise uniqueness for an SDE whose dynamics are given by one equation on a semiaxis and by another on the complementary semiaxis. 
A famous counterexample is given by the sign-SDE ($b\equiv0$ and $\sigma=\mathds{1}_{(0,+\infty)}-\mathds{1}_{(-\infty,0]}$), for which existence and uniqueness in law hold, while pathwise uniqueness fails. On $(0,+\infty)$, the coefficients coincide with the ones of an SDE satisfying strong existence and uniqueness and the solution is a Brownian motion. The same holds on $(-\infty,0)$.
However, it is natural to expect that, if the coefficients are sufficiently well-behaved in a neighborhood of the thresholds, then existence and uniqueness should be transferred.

A first contribution of this paper is to provide sufficient conditions, a mathematical justification, and a ready to use formulation of a pasting approach for (generalized) SDEs, which is particularly well suited to equations with piecewise-defined coefficients. Starting from two auxiliary processes that solve SDEs of the same form as \eqref{eqn:SDE_general} on their respective regimes, and for which existence and uniqueness are known to hold, we construct a global process by pasting these local solutions together. We then provide sufficient conditions under which this pasted process solves the original SDE~\eqref{eqn:SDE_general} and inherits the corresponding well-posedness properties. 

These results are established in Proposition~\ref{prop:weak_pasting} and Theorem~\ref{thm:pasting}, which give natural sufficient compatibility conditions on the coefficients of the auxiliary SDEs for, respectively, weak existence and uniqueness in law, and strong existence and pathwise uniqueness. 
Although the compatibility conditions coincide, the proof of Theorem~\ref{thm:pasting} does not rely on Proposition~\ref{prop:weak_pasting}. 
In the weak setting, the use of pasting arguments is not new in the construction of Markov processes.  
By contrast, the corresponding formulation of the pasting result for strong existence and pathwise uniqueness is new.
Moreover, we show that the compatibility conditions formulated for the strong setting also imply weak existence and uniqueness in law in the context of SDEs and allow for a self-contained proof in the weak setting as well. 

Independently of the pasting construction, we also develop a general criterion for pathwise uniqueness in situations where weak existence and uniqueness in law are known to hold. 
This result is stated in Theorem~\ref{Theorem_uniqueness_thresh}, and leads to a comparison theorem given in Theorem~\ref{Theorem_comparison}.
The statement and the proof are inspired by the seminal work~\cite{le2006one}, but we extend the framework beyond the uniformly elliptic case to include diffusion coefficients that may be neither uniformly elliptic nor continuous. 

To summarize, a key feature of all our results is that we do not require global regularity assumptions on the coefficients of the SDEs. In particular, the diffusion coefficient does not need to be uniformly elliptic or continuous, not even across different regimes. 
Our results therefore apply to SDEs with possibly degenerate and discontinuous diffusion coefficients. In the existing literature, results of this kind are available either for discontinuous (uniformly elliptic) coefficients or for degenerate coefficients, except if the solutions are considered up to some stopping time. 
In conclusion, our results provide flexible and robust tools for establishing strong well-posedness for SDEs with singular coefficients.

As an application to threshold-type SDEs, in Section~\ref{sec:application}, we provide  the first weak and strong existence and uniqueness results for the SDE satisfied by the T-CKLS process,
\begin{equation}
X_t = X_0
+ \int_0^t \bigl(a(X_s)-b(X_s)X_s\bigr)\,\vd s
+ \int_0^t \sigma(X_s)|X_s|^{\gamma(X_s)}\,\vd B_s,
\qquad t\ge0,
\label{eq:TCKLS}
\end{equation}
where $X_0>0$ is independent of the Brownian motion $(B_t)_{t\ge0}$, and where the coefficients $a$, $b$, $\sigma$, and $\gamma$ are piecewise constant and may be discontinuous at finitely or countably many thresholds. We assume that $\gamma(x)\in[1/2,1]$ in a neighborhood of $0$.

The T-CIR process is a special case of this model, with $\gamma\equiv 1/2$, and we provide the first existence and uniqueness result for this model as well. 
See Section~\ref{sec:application} for more details and extensions of this model for which the interfaces do not only exhibit threshold effects (coefficient jumps) but also stickiness and skewness.

\medskip

\textbf{Outline.} 
Section~\ref{sec:main_results} is devoted to existence and uniqueness results via pasting technique or local time related techniques. In a dedicated subsection, we comment on the results and some extensions  in relation to existing literature. 
The proof of the main theorems is presented in Appendix~\ref{sec:app:proofs}, after  Appendix~\ref{sec:app:classics}, where we recall definitions and classical results on existence and uniqueness, drift removal and non-explosion. 
In Section~\ref{sec:application}, we apply the pasting technique to specific frameworks of singular diffusions, and in particular to the T-CKLS model. 
Finally, in Appendix~\ref{sec:app:CKLS}, we illustrate the other approach in the case of T-CKLS while discussing some properties of this process. 

\medskip

\textbf{Notation.}
In this paper, any filtered probability space $(\Omega, \mathcal{F}, (\mathcal{F}_t)_{t\geq 0}, \mathbb{P})$ is supposed to satisfy the usual conditions. 
Moreover, we follow the definitions of weak solution, strong solution, uniqueness in law, pathwise uniqueness provided in~\cite[Definitions 5.2.1, 5.3.1, 5.3.2, and 5.3.4]{ks}. We recall them in Appendix~\ref{sec:app:classics}. 
Note that, without loss of generality, one can consider the initial condition of the SDEs, $X_0$, to be deterministic. For extensions to non-deterministic $X_0$, we refer to~\cite[Proposition IX.1.4]{revuz2013continuous}, for instance, to cover the case in which the non deterministic $X_0$ is independent of the driving  $(\mathcal{F}_t)_{t\geq 0}$-Brownian motion and it is also $\mathcal{F}_t$-measurable for all $t\geq 0$.

Throughout this work, $L_t^x(X)$ will denote the symmetric local time of the semimartingale $X$ cumulated at the point $x$  until time time $t$, and $\ell_t^x(X)$ is the right local time. 
In both cases, we work with the standard version that is continuous in $t$ and càdlàg in $x$.
Then, the right local time satisfies $\mathbb{P}$-a.s. 
\begin{equation}
    \ell^x_t(X)=\lim_{\varepsilon \to 0^+} \frac1\varepsilon \int_0^t \ind{(x,x+\varepsilon)}(X_s) \vd \langle X, X\rangle_s
\end{equation}
and the symmetric local time satisfies $\mathbb{P}$-a.s. 
\begin{equation}
    L^x_t(X)=\lim_{\varepsilon \to 0^+} \frac1{2\varepsilon} \int_0^t \ind{(x-\varepsilon,x+\varepsilon)}(X_s) \vd \langle X, X\rangle_s.
\end{equation}

We also use the convention $1/0=\infty$ and $1/\infty=0$.

\section{Main results}
\label{sec:main_results}

In this section, we present weak and strong existence and uniqueness results for generalized SDE \eqref{eqn:SDE_general}. Broadly speaking, the coefficients $\nu$, $b$, and $\sigma$ are locally well-behaved on each interval. 
In particular, we allow them to be piecewise-defined (and discontinuous).
The measure $\nu$ satisfies the following; further details on the measurable functions $b$ and $\sigma$ will be given later.

\Hyp{$\mathcal{NU}$}{hyp:nu}
The measure $\nu$ is a locally finite signed measure on $\mathbb R$, singular with respect to
Lebesgue measure, and satisfies $|\nu(\{x\})|< 1$ for all $x\in\mathbb R$.

Let us note that, this setting includes skew diffusions: $\theta\in \mathbb{R}$ is a skew interface if $\nu(\{\theta\})=\beta \in (-1,1)\setminus \{0\}$ (it would be reflected if $|\nu(\{\theta\})|=1$). If we consider the right local time instead of the symmetric one, $\theta$ would be a skew interface if $\nu(\{\theta\})=\beta \in (-\infty,1/2)\setminus \{0\}$ (a positive reflection at $\theta$ corresponds to $\nu(\{\theta\})=1/2$). 
Moreover, the term $\int_0^t b(Y_s)\vd s$ can be rewritten as $\int_\mathbb{R}L_t^x(X) \nu(\!\vd x)$ for a signed measure $\nu_{b,\sigma}$ which is absolutely continuous with respect to the Lebesgue measure.

In this section we present two approaches for existence and uniqueness.
\begin{itemize}[leftmargin=*]
\item
We first develop a \emph{pasting} approach for weak existence, uniqueness in law, strong existence, and pathwise uniqueness. 
It consists in combining compatible local solutions across different intervals. 

For the sake of completeness, we provide statement and proofs for all results: weak and strong existence and uniqueness. Clearly, for any specific model, there is no need to consider all these results. 
Indeed, there are connections between these notions, such as strong existence implies weak existence, or Yamada-Watanabe~\cite{yamada1971uniqueness} result or Cherny's~\cite[Theorem 3.2]{cherny_2002}. 

Nevertheless, since for some SDEs (e.g. sticky ones) there cannot be a strong solution nor pathwise uniqueness, it is worth mentioning separate results for weak existence and uniqueness in law, which can be applied to sticky diffusions.

\item
Next, in Section~\ref{ssec:pu_leg}, we present a pathwise uniqueness result applicable to SDEs for which weak existence and uniqueness in law is known.
This result can be seen as a generalization of Le Gall~\cite[Theorem 1.3]{le2006one} to the non-uniformly elliptic case and it implies, in this setting, a new comparison result which generalizes~\cite[Theorem~3.4]{le2006one} and \cite[Theorem~3.1]{bass_chen_05} when the two SDEs considered have the same generalized drift, i.e.~the same $\nu$. 
\end{itemize}

Finally, we propose some discussion of the results and describe the connection between them and with the existing literature. The proofs of our results are given in Appendix~\ref{sec:app:proofs}.

Note that the existence and uniqueness results we propose are based on knowing at least existence and uniqueness on some SDEs. We recall some well known general results in Appendix~\ref{sec:app:classics}.

\subsection{Pasting approach} \label{ssec:pasting}

Many classical or more recent references deal with pasting or piecing-out of diffusions and Markov processes~\cite{ikeda1966construction,kopytko2009problem,kopytko2006analytical,meyer1975renaissance,mota2014continuous,nagasawa1976note,Werner_pasting}. 
This method has been useful to define new processes such as the snapping out Brownian motion~\cite{lejay2016snapping}. 
We now provide a version suited for SDEs, under a compatibility condition for the coefficients of the different SDEs we are pasting together, which allows to provide, both weak existence and uniqueness in law and the stronger results on strong existence and pathwise uniqueness.

In this section, we assume that there exists an interface $\vartheta\in \mathbb{R}$ which separates the dynamics above and below $\vartheta$, in the sense that the dynamics on the two sides of $\vartheta$ are described by two processes satisfying an SDEs such as~\eqref{eqn:SDE_general} with suitable coefficients $(\nu_-,\mathfrak{b}_-,\mathfrak{G}_-)$ and $(\nu_+,\mathfrak{b}_+,\mathfrak{G}_+)$, respectively, and where both descriptions remain valid on a neighborhood of $\vartheta$.

Let us be more precise.
Let us consider two SDEs
\begin{equation}
    \label{eqn:SDE_1}
    X_t = x_0^+ + \int_{\mathbb{R}} L^x_t(X)\,\nu_+(\!\vd x)
    + \int_0^t \mathfrak{b}_+(X_s)\, \mathrm{d}s
    + \int_0^t \mathfrak{G}_+(X_s)\, \mathrm{d}B_s,
\end{equation}
and
\begin{equation}
    \label{eqn:SDE_2}
    X_t = x_0^- + \int_{\mathbb{R}} L^x_t(X)\,\nu_-(\!\vd x)
    + \int_0^t \mathfrak{b}_-(X_s)\, \mathrm{d}s
    + \int_0^t \mathfrak{G}_-(X_s)\, \mathrm{d}B_s.
\end{equation}

\Hyp{$\mathcal{WEU}$}{hyp:weak_pasting}
Assume that weak existence and uniqueness in law hold for both SDE~\eqref{eqn:SDE_1} and~\eqref{eqn:SDE_2}. 
Moreover, assume that the solutions are non-explosive. 

We denote by $I_+$ and $I_-$ the corresponding state spaces of the solutions to SDE~\eqref{eqn:SDE_1} and~\eqref{eqn:SDE_2}.
By \emph{state space} $I$ we mean the smallest set such that $\mathbb P(X_t\in I \text{ for all } t\ge 0)=1$.
Note that the initial condition $x_0^\pm\in \text{Interior}(I_\pm)$.

\Hyp{$\mathcal{COMP}$}{hyp:pasting}
Assume that the intersection of the state spaces $I_-\cap I_+$ is non-empty and it contains an interval $(\vartheta-\varepsilon,\vartheta+\varepsilon)$ for some $\vartheta\in \mathbb{R}$ and $\varepsilon>0$ such that
\begin{equation}\label{eq:overlap_coeffs}
\mathfrak{b}_+(x)=\mathfrak{b}_-(x),\qquad \mathfrak{G}_+(x)=\mathfrak{G}_-(x),
\qquad \text{ for all } x\in(\vartheta-\varepsilon,\vartheta+\varepsilon),
\end{equation}
and that the measures coincide on the overlap region, in the sense that
\begin{equation}\label{eq:overlap_nu}
\nu_+|_{(\vartheta-\varepsilon,\vartheta+\varepsilon)}
=
\nu_-|_{(\vartheta-\varepsilon,\vartheta+\varepsilon)},
\end{equation}
i.e.\ $\nu_+(A)=\nu_-(A)$ for every Borel set $A\subseteq(\vartheta-\varepsilon,\vartheta+\varepsilon)$.

Next, let 
\begin{equation} \label{eq:coeffs}
b(x):=
\begin{cases}
\mathfrak{b}_+(x), & x\ge \vartheta,\\[2pt]
\mathfrak{b}_-(x), & x<\vartheta,
\end{cases}
\qquad
\sigma(x):=
\begin{cases}
\mathfrak{G}_+(x), & x\ge \vartheta,\\[2pt]
\mathfrak{G}_-(x), & x<\vartheta,
\end{cases}
\end{equation}
and the local-time drift is defined by
\begin{equation}\label{eq:nu_piecewise}
\nu(A):=\nu_+(A\cap[\vartheta,\infty))+\nu_-(A\cap(-\infty,\vartheta)),
\qquad A\in\mathcal B(\mathbb R).
\end{equation}
(Any fixed convention at $\vartheta$ is acceptable; we adopt the right-hand convention here.)

With $b$ and $\sigma$ as in~\eqref{eq:coeffs}, the generalized SDE~\eqref{eqn:SDE_general} is precisely the one associated with the two-regime dynamics~\eqref{eqn:SDE_1}-\eqref{eqn:SDE_2} separated by the interface/threshold $\vartheta$. Finally, we set
\begin{equation} \label{eq:state_space}
    I := (I_+ \cap (\vartheta-\varepsilon,+\infty)) \cup (I_- \cap (-\infty,\vartheta+\varepsilon)).
\end{equation}


\begin{prop}[Weak existence and uniqueness in law by piecing-out]
\label{prop:weak_pasting}
Assume Hypothesis~\hypref{hyp:weak_pasting} and Hypothesis~\hypref{hyp:pasting} and let $I$ given by~\eqref{eq:state_space}. 
Then for every $x\in \text{Interior}(I)$ there exists a unique weak solution $(X,B)$ to~\eqref{eqn:SDE_general}
with $X_0=x$ and $(\nu,b,\sigma)$ defined as in~\eqref{eq:coeffs}.
Moreover, this solution is non-explosive and $I$ is its state space, i.e. 
$
    \PP\big(X_t\in I \ \forall t\ge 0\big)=1.
$
\end{prop}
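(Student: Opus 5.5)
The plan is a piecing-out construction with alternating stopping times, together with a localization argument for uniqueness. Fix $x\in\text{Interior}(I)$ and, without loss of generality, assume $x\ge\vartheta$ (the other case is symmetric). Set $a_\pm:=\vartheta\pm\varepsilon/2$; the key point is that on $(\vartheta-\varepsilon,\vartheta+\varepsilon)$ both local dynamics coincide with the global one, so switching regimes can be done with some room to spare. Let $\tau^-_0=0$.

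\textbf{Existence.} We build the process by an alternating concatenation. On a rich enough space carrying i.i.d.\ ingredients, start a weak solution $X^{(1)}$ of \eqref{eqn:SDE_1} from $x$, which exists by Hypothesis~\hypref{hyp:weak_pasting}. Run it until $\sigma_1:=\inf\{t: X^{(1)}_t\le a_-\}$. Since $a_-<\vartheta$ lies in the overlap, before $\sigma_1$ the process stays in $(a_-,\infty)$. On that region the coefficients of \eqref{eqn:SDE_1} agree with $(\nu,b,\sigma)$: above $\vartheta$ by definition, and on $(a_-,\vartheta)$ by \eqref{eq:overlap_coeffs}--\eqref{eq:overlap_nu}. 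Moreover, local times at levels $\le a_-$ vanish before $\sigma_1$, so $X^{(1)}$ solves \eqref{eqn:SDE_general} on $[0,\sigma_1]$. At $\sigma_1$ we restart, using regular conditional probabilities / a measurable kernel $x\mapsto P^-_x$ of the laws of \eqref{eqn:SDE_2} (measurability in $x$ follows from uniqueness in law, as in Stroock--Varadhan), with a solution of \eqref{eqn:SDE_2} from $a_-$, driven by the shifted Brownian motion. We run it until it hits $a_+$, and so on. The concatenated process $X$ with driving $B$ satisfies \eqref{eqn:SDE_general} on each $[\tau_k,\tau_{k+1}]$. Summing, and using additivity of local times and of the integrals, it satisfies \eqref{eqn:SDE_general} on $[0,\lim\tau_k)$. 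The state space statement follows because each piece lives in $I_\pm$ restricted as in \eqref{eq:state_space}, and non-explosion of each piece gives that no piece explodes.

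\textbf{Main obstacle: $\tau_k\to\infty$.} Each crossing from $a_-$ to $a_+$ (or back) is an excursion of a solution to the common overlap SDE. By the strong Markov property (consequence of uniqueness in law) these crossing durations are i.i.d.\ copies of the law of $H_{a_+}$ for the overlap SDE started at $a_-$, and similarly for the downcrossings. These durations are a.s.\ strictly positive by path continuity, so by the law of large numbers $\sum_k(\tau_{k+1}-\tau_k)=\infty$ a.s. One must verify the conditional-independence structure carefully; alternatively, it suffices to note that infinitely many oscillations between $a_-$ and $a_+$ in finite time contradict continuity of the concatenated path on $[0,\lim\tau_k]$, once continuity at the limit is established via non-explosion.

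\textbf{Uniqueness in law.} Let $(X,B)$ be any weak solution of \eqref{eqn:SDE_general} from $x$. Define the same alternating stopping times $\tau_k$ for $X$. On $[\tau_{2j},\tau_{2j+1}]$ the process $X_{\tau_{2j}+\cdot}$ solves \eqref{eqn:SDE_1} up to a hitting time, driven by the Brownian motion $B_{\tau_{2j}+\cdot}-B_{\tau_{2j}}$. By uniqueness in law for \eqref{eqn:SDE_1}, which extends to uniqueness of the law of the solution stopped at a hitting time (via the standard localization argument, e.g.\ Ikeda--Watanabe, or by pasting with a solution of \eqref{eqn:SDE_1} after the hitting time and invoking Hypothesis~\hypref{hyp:weak_pasting}), the conditional law of each segment given $\mathcal F_{\tau_{2j}}$ is determined. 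Analogously, the odd segments are governed by \eqref{eqn:SDE_2}. By induction on $k$ the law of $X_{\cdot\wedge\tau_k}$ is determined for every $k$. Since $\tau_k\to\infty$ a.s., by the same argument as in existence applied to $X$, the law of $X$ is unique.
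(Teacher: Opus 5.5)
Your proposal is correct and follows essentially the paper's proof. Both use alternating hitting times at two distinct levels inside the overlap (you take $\vartheta\pm\varepsilon/2$ where the paper takes $\vartheta$ and $\vartheta-\varepsilon/2$), concatenate independent auxiliary solutions, get non-accumulation from the i.i.d.\ strictly positive crossing durations (your law of large numbers replaces the paper's Borel--Cantelli argument), and prove uniqueness by induction on the conditional laws of the segments. You also make explicit the uniqueness of stopped laws, which the paper uses without comment. One caveat: drop your ``alternative'' non-accumulation argument in the existence part, because there the pasted path is defined only on $[0,\lim\tau_k)$ and non-explosion does not rule out a bounded oscillation between $a_-$ and $a_+$, so you cannot invoke continuity at $\lim\tau_k$. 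That argument is valid only in the uniqueness part, where the solution is continuous on $[0,\infty)$ a priori, which is how the paper uses it.
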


We now aim to construct a strong solution $X$ to the general SDE~\eqref{eqn:SDE_general} 
by pasting together strong solutions corresponding to different regimes of the coefficients. 
As we recalled earlier in this paper, sometimes one can deduce strong existence and uniqueness from weak existence and uniqueness and additional properties. We do so in the next section. Instead, this is not the strategy of the next theorem, which is independent from the previous one on weak existence and uniqueness.

\Hyp{$\mathcal{SEU}$}{hypothesis_strong}
Assume that strong existence and pathwise uniqueness hold for the SDEs~\eqref{eqn:SDE_1}-\eqref{eqn:SDE_2}, that the solutions are non-explosive, and that Hypothesis~\hypref{hyp:pasting} holds. 

\begin{theorem}
\label{thm:pasting}
Assume Hypothesis~\hypref{hypothesis_strong}, let $I$ as in~\eqref{eq:state_space} and $(\nu,b,\sigma)$ defined as in~\eqref{eq:coeffs}.
Then, for every $x$ in the interior of $I$, there exists a unique strong solution to the SDE~\eqref{eqn:SDE_general} with $X_0=x$. Moreover, this solution is non-explosive and its state space is $I$. 
\end{theorem}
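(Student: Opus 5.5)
We will build the solution by alternating between the two regimes on a fixed filtered probability space carrying the Brownian motion $B$, using the overlap $(\vartheta-\varepsilon,\vartheta+\varepsilon)$ as a buffer. Fix $\vartheta_-:=\vartheta-\varepsilon/2$ and $\vartheta_+:=\vartheta+\varepsilon/2$. Say $x\geq\vartheta$ (the other case is symmetric). Let $X^{(1)}$ be the strong solution of~\eqref{eqn:SDE_1} started at $x$ and driven by $B$, given by Hypothesis~\hypref{hypothesis_strong}, and set $\tau_1:=\inf\{t\geq0: X^{(1)}_t\le \vartheta_-\}$. On $\{\tau_1<\infty\}$ we restart: $X^{(2)}$ is the strong solution of~\eqref{eqn:SDE_2} driven by the shifted Brownian motion $B^{\tau_1}_t:=B_{\tau_1+t}-B_{\tau_1}$ with initial value $\vartheta_-$, which is $\mathcal F_{\tau_1}$-measurable (here deterministic), and $\tau_2:=\tau_1+\inf\{t: X^{(2)}_t\ge\vartheta_+\}$. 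We iterate in the same way, producing $0=\tau_0<\tau_1<\tau_2<\cdots$, and define $X_t:=X^{(k)}_{t-\tau_{k-1}}$ on $[\tau_{k-1},\tau_k)$. Since strong solutions are given by a measurable functional $F_\pm(x_0,B)$, each piece is adapted to the shifted filtration, and hence $X$ is adapted to the filtration generated by $B$.

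Next we check that $X$ solves~\eqref{eqn:SDE_general}. On $[\tau_{k-1},\tau_k]$ the $k$-th piece takes values in $(\vartheta_-,\infty)\cap I_+$ for a $+$-piece, and analogously for a $-$-piece. On these sets the coefficients $(\nu_\pm,\mathfrak b_\pm,\mathfrak G_\pm)$ agree with $(\nu,b,\sigma)$ by~\eqref{eq:overlap_coeffs}--\eqref{eq:overlap_nu} and~\eqref{eq:coeffs}--\eqref{eq:nu_piecewise}. In particular, the value of $\nu$ at $\vartheta$ is consistent because $\nu_+(\{\vartheta\})=\nu_-(\{\vartheta\})$. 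Local times are additive over the time intervals and are determined locally in space, so the local time of the piece equals the increment of $L^x(X)$ over $[\tau_{k-1},\tau_k]$. Summing the increments shows that $X$ satisfies~\eqref{eqn:SDE_general} up to $\tau_\infty:=\lim_k\tau_k$.

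The main obstacle is to show that $\tau_\infty=\infty$ a.s. Each crossing between $\vartheta_-$ and $\vartheta_+$ takes place inside $(\vartheta-\varepsilon,\vartheta+\varepsilon)$, where both SDEs coincide. The plan is to show that the durations $\tau_{k+1}-\tau_k$ for same-direction crossings are i.i.d., by the strong Markov property coming from uniqueness in law and the independence of Brownian increments. These durations are a.s. strictly positive by path continuity. Hence $\sum_k(\tau_{k+1}-\tau_k)=\infty$ a.s. by the law of large numbers, or by Borel--Cantelli. If instead only finitely many $\tau_k$ are finite, then the last piece is a global non-explosive solution and there is nothing to prove. Non-explosiveness of $X$ then follows from that of each piece, and the state space claim follows since each piece lives in $I_\pm$ restricted as in~\eqref{eq:state_space}. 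Both sides can be reached because the pieces start in the interior of $I$.

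For pathwise uniqueness, let $Y$ be another solution with the same $B$ and $Y_0=x$. Set $\sigma_1:=\inf\{t: Y_t\le\vartheta_-\}$. Up to $\sigma_1$, $Y$ solves~\eqref{eqn:SDE_1}, since the coefficients agree on $(\vartheta_-,\infty)$. To compare it with $X^{(1)}$ we extend $Y$ after $\sigma_1$ by the strong solution of~\eqref{eqn:SDE_1} driven by $B^{\sigma_1}$, and then apply pathwise uniqueness for~\eqref{eqn:SDE_1}. This gives $Y=X^{(1)}$ on $[0,\sigma_1]$, and hence $\sigma_1=\tau_1$. By induction, with the same extension trick at each $\tau_k$, we get $Y=X$ on $[0,\tau_k]$ for all $k$. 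Since $\tau_k\to\infty$, it follows that $Y=X$.
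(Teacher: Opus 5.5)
Your proposal is correct and follows essentially the same route as the paper. Both proofs use alternating hitting times of two levels inside the overlap strip, paste strong solutions of \eqref{eqn:SDE_1}--\eqref{eqn:SDE_2} driven by the shifted Brownian motions, rely on additivity of local time, rule out accumulation of the switching times via independence of the crossing durations and Borel--Cantelli, and prove pathwise uniqueness by induction over the switching times. The differences are minor and in your favor: you switch at $\vartheta\pm\varepsilon/2$ rather than at $\vartheta$ and $\vartheta-\varepsilon/2$, and your step of extending $Y$ after $\sigma_1$ by a strong solution of the auxiliary equation makes rigorous the point where the paper applies pathwise uniqueness to processes that solve the auxiliary SDE only up to a stopping time.
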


\subsection{Pathwise uniqueness when weak existence and uniqueness hold} \label{ssec:pu_leg}

In this section, we provide a pathwise uniqueness result for SDE \eqref{eqn:SDE_general}, once provided that weak existence and uniqueness in law hold. This extends the existing literature to include some threshold diffusions for which ellipticity condition of the diffusion coefficient is not satisfied. We also propose a comparison theorem based on this result. 

Let us first introduce some notation.  
For any measurable function $\sigma \colon \mathbb{R} \to \mathbb{R}$, let
\begin{equation} \label{eq:esigma}
    E_\sigma :=\{x\in \mathbb{R} \colon \int_{\mathcal{U}(x)}{(\sigma(y))^{-2}} \vd y=+\infty \ \forall \mathcal{U}(x) \text{ open set containing }x\}.
\end{equation}
For any $U\subseteq \mathbb{R}$ and $\varepsilon>0$ let $U^{\varepsilon}:=\{x\in \mathbb R \colon \exists y \in U \text{ such that } |x-y| < \varepsilon \}$ the enlarged set. Note that if $U=\emptyset$ then $U_\varepsilon=\emptyset$.

\Hyp{$\mathcal{COEF}$}{hyp:engel-schm:modif}
Assume that $b$ and $\sigma$ are measurable, locally bounded functions, $\nu$ satisfies assumption~\hypref{hyp:nu}, and there exists  
an open set $U \subseteq \mathbb R$ such that $E_\sigma \subseteq U$. 
Moreover, assume that for every compact $K \subseteq \mathbb R\setminus E_\sigma$, 
there exist 
\begin{itemize}[nolistsep]
\item a strictly positive $\varepsilon$, 
\item a measurable function $\rho \colon \mathbb R\to [0,+\infty]$, satisfying $\int_{\mathcal{V}(0)} 1/{\rho(y)} \vd y=\infty$, for any neighborhood $\mathcal{V}(0)$ of $0$, 
\item and an increasing function $f \colon K \to \mathbb{R}$
\end{itemize}
such that 
\begin{enumerate}[label=$(\mathcal{C}_\arabic*)$]
    \item \label{item:hyp:1}
     for all $x,y\in U^\varepsilon$ such that $|x-y| <\varepsilon$: 
    \begin{equation}
        (\sigma(x)-\sigma(y))^2 \leq \rho(x-y);
    \end{equation} 
    \item \label{item:hyp:2}
    $\inf_{x\in K} \sigma(x) >0$ 
    and for all $x,y\in K$ such that $|x-y| <\varepsilon$: 
    \begin{equation}
        (\sigma(x)-\sigma(y))^2 \leq \frac{\rho(x-y) |f(x)-f(y)|}{|x-y|}.
    \end{equation}
\end{enumerate}

\begin{remark}
    If $E_\sigma=\emptyset$, then $U$ can be chosen to be the empty set as well and 
    assumption~\ref{item:hyp:1} trivially holds. 
\end{remark}
\Hyp{$\mathcal{SOL}$}{hyp:pathwise}
Assume that there exists a weak solution to~\eqref{eqn:SDE_general} and that every weak solution to~\eqref{eqn:SDE_general} is non-explosive. 
\begin{prop}
Assume that $\nu \equiv 0$ and that Hypothesis~\hypref{hyp:engel-schm:modif} and  Hypothesis~\hypref{hyp:pathwise} hold.  
Let $X^{(1)}$ and $X^{(2)}$ be two weak solutions of~\eqref{eqn:SDE_general}, defined with
respect to the same Brownian motion on the same probability space, and such that
$X^{(1)}_0 = X^{(2)}_0$ $\mathbb{P}$-a.s.  
Then, the right local time 
$
\ell_\cdot^0(X^{(1)} - X^{(2)}) = 0$ $\mathbb{P}\text{-a.s.}
$.
\label{prop_uniqueness_thresh}
\end{prop}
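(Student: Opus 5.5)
The plan follows Le Gall's local time argument~\cite[Lemma 1.0 and Theorem 1.3]{le2006one}, with a localization that separates the degenerate region $U\supseteq E_\sigma$ from compact sets where $\sigma$ is bounded below. Set $Z:=X^{(1)}-X^{(2)}$. Since $\nu\equiv 0$, $Z$ is a continuous semimartingale with $Z_0=0$ and
\begin{equation}
  \vd\langle Z,Z\rangle_s=(\sigma(X^{(1)}_s)-\sigma(X^{(2)}_s))^2\vd s .
\end{equation}
By Le Gall's lemma, $\ell^0_t(Z)=0$ holds as soon as
\begin{equation}
  \int_0^t \frac{\vd\langle Z,Z\rangle_s}{\rho(Z_s)}\ind{\{Z_s>0\}}<\infty \quad \text{a.s.},
\end{equation}
because $\int_{0^+}\rho^{-1}=\infty$. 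Non-explosion (Hypothesis~\hypref{hyp:pathwise}) lets us localize: up to the exit time $\tau_n$ of both processes from $[-n,n]$ everything is bounded, and $\tau_n\to\infty$. It is also enough to control the integral on $\{0<Z_s<\varepsilon\}$, since by the occupation formula the complementary part only involves $\ell^a$ for $a\ge\varepsilon$, and the criterion is really about the behaviour of $\rho$ near $0$.

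\textbf{Step 1 (near the degenerate set).} On times when $X^{(1)}_s,X^{(2)}_s\in U^\varepsilon$ and $|Z_s|<\varepsilon$, condition~\ref{item:hyp:1} gives $(\sigma(X^{(1)}_s)-\sigma(X^{(2)}_s))^2/\rho(Z_s)\le 1$. The integrand is therefore bounded by $1$, and this part of the integral is at most $t$.

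\textbf{Step 2 (away from $E_\sigma$).} The remaining times have at least one of $X^{(i)}_s$ in $[-n,n]\setminus U^\varepsilon$, and then both lie in $[-n,n]\setminus U^{\varepsilon'}$ for a slightly smaller enlargement (shrinking $\varepsilon$ so that $|Z_s|<\varepsilon/2$). This is a compact set $K\subseteq\mathbb R\setminus E_\sigma$, so we apply~\ref{item:hyp:2} on $K$ (slightly enlarged if needed). The integrand is then bounded by $|f(X^{(1)}_s)-f(X^{(2)}_s)|/|Z_s|$, and we follow Le Gall's argument for the elliptic case. Since $f$ is increasing and bounded on $K$, with $\inf_K\sigma>0$, we write the ratio as $\int_0^1 f'(X^{(2)}_s+uZ_s)\,\vd u$, interpreting $f'$ as the measure $\vd f$ via regularization. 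We then bound the time integral using the occupation time formula for the processes $X^{(2)}+uZ$, whose martingale part has quadratic variation bounded below on $K$. This yields a bound of the form
\begin{equation}
  \mathbb E\int_0^{t\wedge\tau_n}\ldots \le C\,(f(\sup K)-f(\inf K))\sup_a \mathbb E\, \ell^a_{t\wedge\tau_n}(\cdot)<\infty .
\end{equation}
Alternatively, following Le Gall, we prove $\int_0^t \ind{K}(X^{(i)}_s)\,\vd s$-type estimates via Krylov or the occupation formula using $\sigma\ge c>0$ on $K$.

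\textbf{Main obstacle.} The delicate step is Step 2: making the Le Gall estimate work when $\sigma$ is only bounded below on $K$ and not globally, and handling times at which the pair straddles $\partial U^\varepsilon$. I would first try to define stopping-time sequences alternating between visits to $U^{\varepsilon/2}$ and to the complement of $U^{\varepsilon}$. On each excursion interval away from $U^{\varepsilon/2}$, the processes coincide in law with solutions of an SDE with coefficients modified outside $K$ to be uniformly elliptic, so the classical estimate applies. Finitely many such intervals occur before $\tau_n\wedge t$ by continuity of paths. Summing Steps 1 and 2 gives finiteness of the integral, hence $\ell^0_\cdot(Z)=0$ a.s.
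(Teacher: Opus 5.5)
Your proposal is essentially the paper's proof: Le Gall's lemma after localizing at $\tau_n$, a split according to whether both $X^{(i)}_s$ lie in $U^\varepsilon$ (integrand at most $1$ by $(\mathcal{C}_1)$) or, since $|Z_s|<\varepsilon$, both lie in the compact $K=[-n,n]\setminus U$, then $(\mathcal{C}_2)$ with the interpolation $X^{(2)}+uZ$, a $\mathcal{C}^1$ regularization of $f$, and the occupation times formula; note that you should fix this $K$ before extracting $\varepsilon,\rho,f$ from the hypothesis, since they depend on $K$. The alternating stopping times in your ``main obstacle'' paragraph are not needed: ellipticity enters only through $\sigma^{(u)}_s:=(1-u)\sigma(X^{(2)}_s)+u\,\sigma(X^{(1)}_s)\ge\inf_K\sigma>0$ on the event where both processes are in $K$, so you can insert the factor $(\sigma^{(u)}_s)^2/(\inf_K\sigma)^2\ge 1$ there, drop the indicator, and bound $\sup_{a,u}\mathbb{E}\,\ell^a_{t\wedge\tau_n}(X^{(2)}+uZ)$ by Tanaka's formula, which uses only the boundedness of $b$ and $\sigma$ on $[-n,n]$.
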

\begin{theorem}[Pathwise uniqueness]
Assume that Hypothesis~\hypref{hyp:engel-schm:modif} and \hypref{hyp:pathwise} hold, and that uniqueness in law holds for~\eqref{eqn:SDE_general}.  
Then pathwise uniqueness holds. 
By Yamada-Watanabe~\cite{yamada1971uniqueness} there exists a unique strong solution to
\eqref{eqn:SDE_general}.
\label{Theorem_uniqueness_thresh}
\end{theorem}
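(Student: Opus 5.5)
We follow Le Gall's strategy in~\cite[Theorem 1.3]{le2006one}. The plan has two stages. First, we reduce the generalized equation with $\nu\not\equiv0$ to one with $\nu\equiv0$ by a space transformation. Second, we use Proposition~\ref{prop_uniqueness_thresh} to show that the maximum of two solutions is again a solution.

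\textbf{Removing the singular drift.} Under \hypref{hyp:nu}, set
\[
F(x)=\exp\bigl(-2\nu^c((0,x])\bigr)\prod_{0<y\le x}\frac{1-\nu(\{y\})}{1+\nu(\{y\})}
\]
(with the obvious convention for $x<0$), where $\nu^c$ is the continuous part of $\nu$. Let $\Phi(x)=\int_0^x F(y)\vd y$, which is a strictly increasing bijection onto its range. By the symmetric It\^o--Tanaka formula, $X$ solves~\eqref{eqn:SDE_general} if and only if $Y=\Phi(X)$ solves an equation with no local-time term. Its coefficients are
\[
\tilde b=(F\,b)\circ\Phi^{-1},\qquad \tilde\sigma=(F\,\sigma)\circ\Phi^{-1}
\]
(the drift-removal result recalled in Appendix~\ref{sec:app:classics}). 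Since $F$ is locally bounded, bounded away from $0$ on compacts, and of locally bounded variation, we check three things:
\begin{itemize}[nolistsep]
\item $E_{\tilde\sigma}=\Phi(E_\sigma)$;
\item local boundedness is preserved;
\item \ref{item:hyp:1} and \ref{item:hyp:2} transfer, with $\rho$ replaced by $C\rho$ (adjusting $\varepsilon$ via the local Lipschitz constants of $\Phi,\Phi^{-1}$) and $f$ replaced by $C f\circ\Phi^{-1}+C F\circ\Phi^{-1}$.
\end{itemize}
The extra term in $f$ absorbs the increments of $F$: we write $(F\sigma)(x)-(F\sigma)(y)$ as $F(x)(\sigma(x)-\sigma(y))+\sigma(y)(F(x)-F(y))$. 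Because $F$ is a difference of two monotone functions, the second part is controlled by a monotone $f$. Here we need $\rho(z)\ge c|z|$ near $0$, which we may assume by replacing $\rho$ by $\rho+|\cdot|$ (still non-integrable). On $U^\varepsilon$, \ref{item:hyp:1} alone must absorb this term, again via $\rho+|\cdot|$. Hypotheses \hypref{hyp:pathwise} and uniqueness in law also transfer through the bijection $\Phi$. So it suffices to treat $\nu\equiv0$.

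\textbf{Case $\nu\equiv0$.} Let $X^{(1)},X^{(2)}$ be two solutions driven by the same $B$ with the same initial value. Proposition~\ref{prop_uniqueness_thresh} gives $\ell^0(X^{(1)}-X^{(2)})=0$. Tanaka's formula for $(X^{(1)}-X^{(2)})^+$ then shows that
\[
X^{(1)}\vee X^{(2)}=X^{(2)}+(X^{(1)}-X^{(2)})^+
\]
satisfies
\[
X^{(1)}\vee X^{(2)}=X_0+\int_0^\cdot \bigl(\ind{\{X^{(1)}>X^{(2)}\}}b(X^{(1)})+\ind{\{X^{(1)}\le X^{(2)}\}}b(X^{(2)})\bigr)\vd s+\int_0^\cdot(\ldots)\vd B,
\]
which equals $\int b(X^{(1)}\vee X^{(2)})\vd s+\int\sigma(X^{(1)}\vee X^{(2)})\vd B$. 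Hence $X^{(1)}\vee X^{(2)}$ is again a weak solution, non-explosive by \hypref{hyp:pathwise}. By uniqueness in law, $X^{(1)}\vee X^{(2)}$ and $X^{(1)}$ have the same one-dimensional marginals. Localizing with bounded functions, e.g. comparing $\mathbb E[\arctan(X^{(1)}_t\vee X^{(2)}_t)]$ with $\mathbb E[\arctan X^{(1)}_t]$, we get $X^{(1)}_t\vee X^{(2)}_t=X^{(1)}_t$ a.s. By symmetry $X^{(1)}_t=X^{(2)}_t$ a.s. for each $t$, and continuity gives indistinguishability. Yamada--Watanabe then yields the strong solution.

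\textbf{Main obstacle.} The delicate step is the transfer of \hypref{hyp:engel-schm:modif} under $\Phi$, especially near atoms of $\nu$. There $F$ jumps, so $\tilde\sigma$ acquires an extra discontinuity. This is harmless on compacts $K$ away from $E_\sigma$, since the monotone $f$ can absorb it. Inside $U^\varepsilon$, however, condition \ref{item:hyp:1} allows no such $f$. If a jump of $F$ at an atom in $U^\varepsilon$ breaks \ref{item:hyp:1}, I would instead apply Proposition~\ref{prop_uniqueness_thresh} locally, i.e. redo its proof directly with the symmetric local-time term rather than transforming, and use that $\ell^0$ only charges times when both processes are at the same point.
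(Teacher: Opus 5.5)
Your overall route is the paper's. You remove $\nu$ via $\Phi$ (Lemmas~\ref{lem:nu_removal} and~\ref{lem:nu_coeff_new}), then apply Proposition~\ref{prop_uniqueness_thresh}, Tanaka and uniqueness in law to $X^{(1)}\vee X^{(2)}$. The $\nu\equiv0$ part is fine; your $\arctan$ comparison is equivalent to the paper's use of $\vee$ and $\wedge$.

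\textbf{The gap.} It is the one you flag, and it is real: \ref{item:hyp:1} does not transfer under $\Phi$. Your claim that on $U^\varepsilon$ the cross term $\sigma(y)(F(x)-F(y))$ is absorbed by passing to $\rho+|\cdot|$ is false. It would need $\sigma(y)^2(F(x)-F(y))^2\lesssim\rho(x-y)+|x-y|$ for nearby $x,y$. Now take an atom $a\in U^\varepsilon$ of $\nu$ near which $\sigma$ does not vanish. Then $\tilde\sigma$ jumps at $\Phi(a)$. Since \ref{item:hyp:1} is symmetric in $(x,y)$, this forces $\tilde\rho\ge c>0$ on a punctured neighbourhood of $0$, hence $\int_{\mathcal V(0)}1/\tilde\rho<\infty$. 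The singular continuous part of $\nu$ can cause the same failure, since its distribution function need not be $\tfrac12$-H\"older.

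Shrinking $U$ does not help when $|\nu|$ charges every neighbourhood of $E_\sigma$. Take $b\equiv0$, $\sigma(x)=\sqrt{|x|}\wedge1$ and $\nu=\sum_{n\ge2}n^{-2}\delta_{1/n}$. Then \hypref{hyp:engel-schm:modif} holds with $E_\sigma=\{0\}$ and $\rho(h)=|h|$. But $\tilde\sigma$ has jumps of order $n^{-5/2}$ at $\Phi(1/n)\to\Phi(0)\in E_{\tilde\sigma}$, so no open $\tilde U\supseteq E_{\tilde\sigma}$ satisfies \ref{item:hyp:1}. The paper's Lemma~\ref{lem:nu_coeff_new} asserts exactly this transfer with its proof omitted, so it has the same weak spot.

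A smaller point: $1/(\rho+|\cdot|)$ stays non-integrable, and $\rho(\Psi(y)-\Psi(y'))$ is dominated by a function of $y-y'$, only if $\rho$ is monotone in $|h|$. \hypref{hyp:engel-schm:modif} does not assume this.

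\textbf{The fallback does not close the gap.} That ``$\ell^0$ only charges times when both processes coincide'' is a tautology and says nothing about the jump. Redoing Proposition~\ref{prop_uniqueness_thresh} in the original coordinates is plausible for getting $\ell^0(X^{(1)}-X^{(2)})=0$. Lemma~\ref{lemma:LT0} only sees $\langle X^{(1)}-X^{(2)}\rangle$, which the $\nu$-term does not affect, though you would still need $\sup_a\mathbb E[\ell^a_{t\wedge\tau_m}(Z^u)]<\infty$ with that extra drift. But you would then have to show one of two things, and you address neither:
\begin{itemize}
\item that $X^{(1)}\vee X^{(2)}$ solves the equation \emph{with} the local-time drift, which needs an identity for $L^y(X^{(1)}\vee X^{(2)})$ at the atoms of $\nu$;
\item or that $\ell^0(\Phi(X^{(1)})-\Phi(X^{(2)}))=0$.
\end{itemize}

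\textbf{A workable repair.} Skip the transfer of the hypothesis and prove the second item directly. Write $s_i=\sigma(X^{(i)})$, and let $V$ be the total variation function of $F$. A case distinction on the signs of $s_1,s_2$ gives
\[
(\tilde\sigma(Y^{(1)})-\tilde\sigma(Y^{(2)}))^2\le C(s_1-s_2)^2+C\,\ind{\{s_1s_2\ge0\}}\min(|s_1|,|s_2|)^2\,|V(X^{(1)})-V(X^{(2)})|.
\]
The first term is handled by \ref{item:hyp:1}--\ref{item:hyp:2} as in the proposition. The second term should be treated like the $f$-term of \ref{item:hyp:2}, via $Z^u$ and the occupation times formula, on $U^\varepsilon$ as well as on $K_m$. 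No ellipticity is needed there, because $\min(|s_1|,|s_2|)\le|\sigma^{(u)}|$ whenever $s_1s_2\ge0$. Your max argument in the $Y$-coordinates then goes through.
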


\begin{theorem}[Comparison theorem for thresholds SDEs] \label{Theorem_comparison}
Let $X^{(1)}, X^{(2)}$ be two non-explosive weak solutions to the following SDEs with respect to the same Brownian motion $B$, on the same probability space: for $i\in \{1,2\}$
\begin{equation}
X^{(i)}_t=X_0^{(i)}+ \int_{\mathbb{R}} L^x_t(X^{(i)}) \nu(\!\vd x)+ \int_0^t b_i(X^{(i)}_s)\vd s+\int_0^t \sigma(X^{(i)}_s)\vd B_s,\quad \forall t\geq 0
\end{equation}
where $b_i$ and $\nu$ and $\sigma$ satisfy Hypothesis~\hypref{hyp:engel-schm:modif}. 
Assume further that 
for all $x\in \mathbb{R}$ $b_1(x)\geq b_2(x)$, one of two functions $b_1$ or $b_2$ is Lipschitz, and $X^{(1)}_0\geq X^{(2)}_0$ $\mathbb{P}$-a.s.. 
Then it holds $\mathbb{P}$-a.s.~for all $t\geq 0$ that 
$X^{(1)}_t\geq X^{(2)}_t$.
\end{theorem}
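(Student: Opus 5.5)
The plan follows Le Gall's comparison argument~\cite[Theorem~3.4]{le2006one}. The key ingredient is a vanishing-local-time statement for $Z:=X^{(1)}-X^{(2)}$ that remains valid when the two equations have different drifts $b_1,b_2$ but the same $\nu$ and $\sigma$. The first step removes the singular drift. Let $F$ be the scale-type function associated with $\nu$, continuous and strictly increasing, with left/right derivatives $\exp(-\int 2\,\nu^c)\prod(1\mp\nu(\{x\}))$ (cf.\ the drift-removal results recalled in Appendix~\ref{sec:app:classics}). Set $Y^{(i)}:=F(X^{(i)})$. By the symmetric Itô--Tanaka formula, each $Y^{(i)}$ solves an SDE with $\nu\equiv 0$:
\begin{equation}
Y^{(i)}_t = Y^{(i)}_0 + \int_0^t \tilde b_i(Y^{(i)}_s)\vd s + \int_0^t \tilde\sigma(Y^{(i)}_s)\vd B_s,
\end{equation}
where $\tilde\sigma=(F'\sigma)\circ F^{-1}$ and $\tilde b_i=(F'b_i)\circ F^{-1}$. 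Because $F$ is strictly increasing, $X^{(1)}\ge X^{(2)}$ is equivalent to $Y^{(1)}\ge Y^{(2)}$. It then has to be checked that $\tilde\sigma$ again satisfies \hypref{hyp:engel-schm:modif}, with $E_{\tilde\sigma}=F(E_\sigma)$ and the open set $F(U)$. The point is that $F'$ is locally of bounded variation and bounded away from $0$ and $\infty$, so the jumps of $F'$ can be absorbed into the increasing function $f$ in $(\mathcal C_2)$ (Le Gall's trick). On $U^\varepsilon$ this absorption is not available, so the extra term has to be controlled directly; this may require shrinking $\varepsilon$. One also needs $\tilde b_1\ge\tilde b_2$, which holds since $F'>0$, and that Lipschitz continuity of one $b_i$ is preserved locally, which holds because $F'$ is locally bounded.

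The second step is to prove $\ell^0_\cdot(Y^{(1)}-Y^{(2)})=0$ by repeating the proof of Proposition~\ref{prop_uniqueness_thresh}. That proof only uses the difference of the martingale parts, namely the quadratic variation $\int(\tilde\sigma(Y^{(1)})-\tilde\sigma(Y^{(2)}))^2\vd s$, together with the occupation times formula and the bounds $(\mathcal C_1)$--$(\mathcal C_2)$. The drift difference is of bounded variation and does not enter the estimate $\int_0^t \ind{\{0<Z_s\le\delta\}}\rho(Z_s)^{-1}\vd\langle Z\rangle_s<\infty$. The argument therefore goes through with $b_1\neq b_2$, and localisation by non-explosion handles unboundedness. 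I expect this step to be the main obstacle: near $E_\sigma$ the function $\sigma$ is not bounded below, and $(\mathcal C_1)$ alone must give $\vd\langle Z\rangle_s\le \rho(Z_s)\vd s$. The non-integrability $\int_{0^+}\rho^{-1}=\infty$ then forces $\ell^0=0$, as in Yamada--Watanabe. On compacts $K$ away from $E_\sigma$ one instead uses $(\mathcal C_2)$ together with the occupation-time bound on $\int |f(Y^{(1)})-f(Y^{(2)})|/|Z|\,\vd s$, exactly as in Le Gall.

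The final step is Tanaka's formula, which with $\ell^0(Z)=0$ gives
\begin{equation}
(Z_t)^- = (Z_0)^- - \int_0^t \ind{\{Z_s\le 0\}}\vd Z_s .
\end{equation}
Here $Z_0\ge 0$ by assumption. We localise with stopping times $\tau_n$ making the stochastic integral a martingale and keeping both processes in a compact set, and then take expectations. If, say, $\tilde b_1$ is Lipschitz with constant $C_n$ on that compact, then on $\{Z\le0\}$
\begin{equation}
\tilde b_1(Y^{(1)})-\tilde b_2(Y^{(2)})\ \ge\ \tilde b_1(Y^{(1)})-\tilde b_1(Y^{(2)})\ \ge -C_n|Z| ,
\end{equation}
and the symmetric bound holds if $b_2$ is the Lipschitz one. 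This yields
\begin{equation}
\mathbb E[(Z_{t\wedge\tau_n})^-]\le C_n\int_0^t\mathbb E[(Z_{s\wedge\tau_n})^-]\vd s .
\end{equation}
Gronwall's lemma gives $Z^-_{t\wedge\tau_n}=0$ a.s. Letting $n\to\infty$, which is allowed by non-explosion, and using continuity of paths gives $Y^{(1)}_t\ge Y^{(2)}_t$ for all $t$ a.s., and hence $X^{(1)}_t\ge X^{(2)}_t$.
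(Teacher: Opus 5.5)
Your route is the paper's: remove $\nu$ with $\Phi_\nu$ (your $F$), rerun the local-time argument of Proposition~\ref{prop_uniqueness_thresh} with the interpolated drift $b_2(X^{(2)})+u\,(b_1(X^{(1)})-b_2(X^{(2)}))$, then Tanaka, a one-sided Lipschitz bound and Gronwall under localisation. For $\nu\equiv 0$ this is complete.

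\textbf{The Lipschitz transfer fails.} The gap is the claim in Step~1 that Lipschitz continuity of $b_1$ (or $b_2$) survives the change of variables because $F'$ is locally bounded. Since $\tilde b_i=(g_\nu b_i)\circ\Psi_\nu$ and $\Psi_\nu$ is bi-Lipschitz on compacts, $\tilde b_i$ is locally Lipschitz iff $g_\nu b_i$ is. So you need $g_\nu=F'$ to be locally Lipschitz wherever $b_i\neq 0$, not merely bounded. This fails as soon as $\nu$ charges the region, for two reasons:
\begin{itemize}
\item at every atom $a$, $g_\nu$ jumps by the factor $(1-\nu(\{a\}))/(1+\nu(\{a\}))$;
\item on atom-free intervals, $\log g_\nu$ has distributional derivative $-2\nu^{\mathrm{na}}$, which is singular with respect to Lebesgue measure, so $g_\nu$ is Lipschitz on an interval only if $\nu$ vanishes there.
\end{itemize}
Your Gronwall step rests on the pointwise bound $\tilde b_1(y_1)-\tilde b_2(y_2)\ge -C(y_2-y_1)$ for $y_1\le y_2$, and this is false in general.

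Concretely, take $\sigma\equiv 1$, $\nu=-\frac12\delta_0$, $b_1\equiv 2$, $b_2\equiv 1$ and $x_*=0$. All hypotheses of the theorem hold, and both drifts are Lipschitz. Then $g_\nu=\frac13$ on $(-\infty,0)$ and $g_\nu=1$ on $[0,\infty)$. For $y_1<0\le y_2$ you get $\tilde b_1(y_1)-\tilde b_2(y_2)=\frac23-1=-\frac13$, however close $y_1$ and $y_2$ are. Hence the inequality $\mathbb E[(Z_{t\wedge\tau_n})^-]\le C_n\int_0^t\mathbb E[(Z_{s\wedge\tau_n})^-]\,\mathrm{d}s$ cannot be derived, whichever $b_i$ you treat as the Lipschitz one.

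\textbf{Shrinking $\varepsilon$ does not fix $U^\varepsilon$.} The difficulty on $U^\varepsilon$ that you flag is also real. Suppose atoms of $\nu$ at points where $\sigma\neq 0$ accumulate at $E_\sigma$; for example $\sigma(x)=\sqrt{|x|}$ and $\nu=\sum_{n\ge1}2^{-n}\delta_{1/n}$, which satisfy the hypotheses. Then every open set containing $E_{\tilde\sigma}=\Phi_\nu(E_\sigma)$ contains a jump of $\tilde\sigma$. A single jump of size $J>0$ forces $\tilde\rho\ge J^2/4$ on a punctured neighbourhood of $0$, which contradicts $\int_{\mathcal V(0)}1/\tilde\rho=\infty$.

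\textbf{The paper has the same hole.} Its proof makes exactly this reduction and simply asserts that $\tilde b_i,\tilde\sigma$ satisfy the assumptions of the case $\nu\equiv0$ (Lemma~\ref{lem:nu_coeff_new}, proof omitted). Your argument, like the paper's, is complete only for $\nu\equiv0$, or more generally when the Lipschitz $g_\nu b_i$ stays locally Lipschitz and $\nu$ does not charge a neighbourhood of $E_\sigma$.

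\textbf{A possible repair away from $E_\sigma$.} You could use the Le~Gall/Bass--Chen device where $\sigma$ is bounded below:
\begin{itemize}
\item cut the Lipschitz drift off near $E_\sigma$ and rewrite it, via the occupation formula, as a local-time drift with density $b_i/\sigma^2$;
\item remove it together with $\nu$ by a scale-type transformation.
\end{itemize}
The transformed Lipschitz drift then lives only near $E_\sigma$ and keeps its ordering with the other drift. Your Step~3 then goes through, provided $\nu$ does not charge that neighbourhood.
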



\begin{remark}[Weakening the assumptions of Theorem~\ref{Theorem_comparison}]
Note that the assumption on Lipschitzianity in Theorem~\ref{Theorem_comparison} can be relaxed.
    More precisely, on one side (globally) Lipschitz can be replaced by locally Lipschitz.
    On another side, if both $b_1$ and $b_2$ are continuous, one can find a Lipschitz function $\tilde{b}$ such that $b_1 < \tilde{b} < b_2$ and apply the Theorem~\ref{Theorem_comparison} twice: to $b_1$ and to $b_2$ with $\tilde{b}$. 
\end{remark}

\subsection{Comments and extensions}
\label{sec:comments}
\paragraph{Pasting approach.}
The formulation of our pasting approach is rather flexible.
Let us first note that the coefficients $\mathfrak{b}_-,\mathfrak{b}_+,\mathfrak{G}_-,\mathfrak{G}_+$ of SDEs \eqref{eqn:SDE_1}-\eqref{eqn:SDE_2} are not assumed to be continuous, actually they may admit infinitely many discontinuities.
The choice of the auxiliary SDEs~\eqref{eqn:SDE_1}-\eqref{eqn:SDE_2} is not unique, and there is no intrinsic choice for the threshold $\vartheta$ and compatibility interval $(\vartheta-\varepsilon, \vartheta+\varepsilon)$ which are not unique. 
Moreover, the statement can be easily reformulated, if needed, with finitely many thresholds together with the respective compatibility conditions of the different dynamics.
Since no openness of $I$, $I_+$, or $I_-$ is used, our setting allows for boundary behaviors such as reflection and absorption, which are already part of the auxiliary SDEs~\eqref{eqn:SDE_1}-\eqref{eqn:SDE_2}. 
Furthermore, Hypothesis~\hypref{hyp:weak_pasting} can be adapted to prove the analogue of Proposition~\ref{prop:weak_pasting} in the case of sticky SDEs
\begin{equation}
    \begin{cases}
    X_t = X_0 + \int_{\mathbb{R}\setminus\{\theta\}} L^x_t(X)\,\nu(\!\vd x)
    + \int_0^t b(X_s) \ind{\{X_s\neq \theta\}} \, \mathrm{d}s + \int_0^t \sigma(X_s) \ind{\{X_s\neq \theta\}}\, \mathrm{d}B_s,\\    
    \int_0^t \ind{\{X_s=\theta\}} \vd s = \kappa L^{\theta}_t(X), 
    \end{cases}
    \label{eqn:SDE_sticky}
\end{equation}
where $\kappa\geq 0$, and $\theta\in \mathbb{R}$.
The only difference is that the analogous of the auxiliary SDEs~\eqref{eqn:SDE_1}-\eqref{eqn:SDE_2} which satisfy weak existence and uniqueness are possibly systems of SDEs. The compatibility condition can stay the same if, for instance the compatibility interval $(\vartheta-\varepsilon,\vartheta+\varepsilon)$ does not intersect $\theta$. We only discuss extension of Proposition~\ref{prop:weak_pasting} because stickiness implies lack of strong solution/path uniqueness \cite{engelbert_peskir_2014}.
There is no strong existence for~\eqref{eqn:SDE_sticky}, see e.g.~\cite{engelbert_peskir_2014}.  

Both strong and weak existence and uniqueness results obtained via pasting of SDEs can be readily adapted to the case of reflected SDEs, corresponding to allowing $\nu$ to take the values $1$ and $-1$. It suffices to consider reflected auxiliary SDEs, and the compatibility condition remains unchanged.

Our results, via the pasting approach or the other approach, require additional existence and uniqueness results.
Some existing weak existence and uniqueness, results for non sticky SDEs are provided in Appendix~\ref{sec:app:classics}. For (weak)  existence and uniqueness of sticky diffusions (uniformly elliptic diffusion outside the sticky interfaces) we refer to~\cite{salins2017markov} and for (strong) existence and uniqueness~\cite{bleiengelbert2014,engel_schm_89_3, MG_S_Y_25} (the latter for continuous diffusion coefficient), and \cite[Theorem IX.3.8]{revuz2013continuous}.

\paragraph{Other approach.} 
Theorem~\ref{Theorem_uniqueness_thresh} is similar in spirit to that of Le Gall~\cite{le2006one} and  Hypothesis~\hypref{hyp:engel-schm:modif} is inspired by Engelbert and Schmidt, in particular by Theorem~4.48 in~\cite{engel_schm_89_3}. But in that result in~\cite{engel_schm_89_3}, as well as in the results in~\cite{bleiengelbert2014}, the solutions are typically stopped upon hitting $E_\sigma$, here instead we deal with non-stopped solutions. 
Indeed, Hypothesis~\ref{item:hyp:1} controls the behavior of the diffusion coefficient $\sigma$ near the set $E_\sigma$ ensuring enough regularity.
Hypothesis~\ref{item:hyp:2} is more classical. It guarantees local ellipticity and allows for \emph{gentle discontinuities} (such as finite jumps) in regions where $\sigma$ stays away from zero. 
It does not require Lipschitz continuity and is sufficient to ensure uniqueness in law away from $E_\sigma$, similarly to conditions found in \cite{engel_schm_89_3}. 
Note that a recent interesting article~\cite{menoukeu2019pathwise} contains a result in the same spirit as ours but with different assumptions which, on the one hand, exclude discontinuous diffusion coefficients and general CKLS shape, and, on the other hand, allows for time inhomogeneous coefficients.

As an alternative to Hypothesis~\hypref{hyp:engel-schm:modif}, we next introduce a set of assumptions $(\mathcal{T})$, that are stronger but more explicitly designed for threshold diffusions, such as the T-CKLS model~\eqref{eq:TCKLS}. 
In threshold diffusions which are solutions to SDEs, the coefficients are typically piecewise smooth or piecewise regular. More precisely, we consider a threshold SDE with threshold $\theta$ (the diffusion coefficient $\sigma$ is discontinuous at $\theta$). We have the following assumption.
\Hyp{$\mathcal{T}$}{hyp:cond:bsigma}
Let $b$ and $\sigma$ be measurable and locally bounded functions, and $\nu\equiv 0$. 
Assume that there exists $\varepsilon > 0$ such that:

\begin{enumerate}[label=\upshape $(\mathcal{T}_\arabic*)$]

    \item \label{item_ellip:th_uniqueness} 
    \emph{(Ellipticity near the threshold)}  
    There exists $\alpha > 0$ such that for all $x \in (\theta - \varepsilon, \theta + \varepsilon)$ it holds 
    $\sigma(x) \geq \alpha$.

    \item \label{item_disc:th_uniqueness}
    \emph{(Controlled jump at the threshold)}  
    There exist $f \colon [\theta - \varepsilon, \theta + \varepsilon] \to \mathbb{R}$ is a bounded increasing function such that for all $x, y \in [\theta - \varepsilon, \theta + \varepsilon]$ such that $(x - \theta)(y - \theta) \le 0$, it holds that
    \begin{equation}
        |\sigma(x) - \sigma(y)|^2 \leq |f(x) - f(y)|.
    \end{equation}

    \item \label{item_classic:th_uniqueness}
    \emph{(Regularity inside each region)}  
    Let $I_1 := (-\infty, \theta)$ and $I_2 := [\theta, +\infty)$.  
    For each region $I \in \{I_1, I_2\}$, one of the following holds:

    \begin{enumerate}[label=\upshape $(\mathcal{T}_{3,\arabic*})$]

        \item \label{item_classic_cont:th_uniqueness}
        \emph{(Continuous case)}  
        There exists a function $\rho \colon (0, \infty) \to (0, \infty)$ such that $\rho(h) \ge h$ for all $h > 0$, 
        $
            \int_{\mathcal{V}(0)} 1/{\rho(h)}\,\mathrm{d}h = \infty
        $ 
        for any neighborhood $\mathcal{V}(0)$ of $0$ and there exists for all $x, y \in I$
        with $|x - y| < \varepsilon$:
        \begin{equation}
            |\sigma(x) - \sigma(y)|^2 \leq \rho(|x - y|).
        \end{equation}

        \item \label{item_classic_disc:th_uniqueness}
        \emph{(Discontinuous case: controlled jumps and ellipticity)}  
        $\sigma|_{I} \ge \alpha$ for some constant $\alpha > 0$, and there exists for all $x, y \in I$ with $|x - y| < \varepsilon$:
        \begin{equation}
            |\sigma(x) - \sigma(y)|^2 \leq |g(x) - g(y)|,
        \end{equation}
        where $g \colon I \to \mathbb{R}$ is an increasing, locally bounded function.

    \end{enumerate}
\end{enumerate}

\begin{remark}
    If \ref{item_ellip:th_uniqueness}-\ref{item_disc:th_uniqueness} and  \ref{item_classic_disc:th_uniqueness} hold for both $I_1$ and $I_2$, then Hypothesis~\hypref{hyp:cond:bsigma} is equivalent to the assumptions in~\cite{le2006one}: 
    $\sigma$ is uniformly elliptic (bounded from below by a strictly positive constant) and there exists $f$ increasing locally bounded such that for all $x,y\in \mathbb{R}$, $(\sigma(x)-\sigma(y))^2 \leq |f(x)-f(y)|$. 
\end{remark}

\begin{remark}
    As emphasized in \cite{le2006one} and \cite[Theorem 4.41]{engel_schm_89_3}, the assumption \ref{item_classic_cont:th_uniqueness} can be relaxed using localized control involving integrability conditions: there exists a function $\rho \colon \mathbb{R} \to (0,\infty)$ such that 
    \begin{equation} \label{eqn:local_0_1}
        |\sigma(x)-\sigma(y)|^2\leq  a(x)\rho(x-y),
    \end{equation} 
    where $a \sigma^{-2}\in L^1_{\text{loc}}(\mathbb{R}\setminus  E_\sigma)$.
\end{remark}

To conclude, let us mention that our comparison theorem, Theorem~\ref{Theorem_comparison} generalizes~\cite[Theorem~3.4]{le2006one} and \cite[Theorem~3.1]{bass_chen_05} to the case of non-elliptic diffusion coefficient but we can only deal with the same signed measure $\nu$ for the two equations. Indeed, the proof strategy of the just cited references seems to fail in our case, due to the lack of ellipticity of the diffusion coefficient.  

\section{Applications to some specific models}
\label{sec:application}

In this section, we apply the general pasting results of this paper to a class of one-dimensional diffusions that are solutions to SDEs with singular coefficients, focusing on a skew-sticky extension of the T-CKLS model. 
For the pure T-CKLS model, for completeness, in Appendix~\ref{sec:app:CKLS}, we present the other approach proposed in this paper. 

We have discussed in Section \ref{sec:main_results} the extension of Proposition~\ref{prop:weak_pasting} to sticky generalized SDEs.

While our results apply to coefficients with a finite or countable number of threshold discontinuities, we restrict ourselves here, without loss of generality, to the case of a single threshold. 

We now introduce the skew-sticky T-CKLS model.
\begin{equation}
\left\{
\begin{array}{ll}
X_t = x_0 + \displaystyle \int_0^t \bigl(a(X_s) - b(X_s) X_s\bigr)\,
        \mathds{1}_{\{X_s \neq \theta\}} \,\mathrm{d}s
    +  \displaystyle \int_0^t \sigma(X_s) \left|X_s\right|^{\gamma(X_s)}\,
        \mathds{1}_{\{X_s \neq \theta\}}\,\mathrm{d}B_s
    + \beta\, L_t^{\theta}(X), \\[6pt]
\displaystyle \int_0^t \mathds{1}_{\{X_s \neq \theta\}}\,\mathrm{d}s
    = \kappa\, L_t^{\theta}(X),
\end{array}
\right.
\label{eq:sticky_skew_CKLS}
\end{equation}
with initial condition $x_0>0$, interface point $\theta>0$, sticky parameter $\kappa\geq 0$, and skewness parameter $\beta \in (-1,1)$.
The drift and diffusion coefficients are given by
\begin{equation}
a(x)-b(x)x 
    =
    \begin{cases}
        a_+ - b_+ x, & \text{if } x \ge \theta,\\[4pt]
        a_- - b_- x, & \text{if } x < \theta,
    \end{cases}
\qquad
\sigma(x)|x|^{\gamma(x)}
    =
    \begin{cases}
        \sigma_+ |x|^{\gamma_+}, & \text{if } x \ge \theta,\\[4pt]
        \sigma_- |x|^{\gamma_-}, & \text{if } x < \theta,
    \end{cases}
\label{eq:drift_TCKLS}
\end{equation}
where $a_-$, $\sigma_-$, and $\sigma_+$ are strictly positive constants, and $\gamma_- \in [1/2,1]$, $\gamma_+ \in [0,1]$. We have used the same notation as in \cite{mazzonetto2024parameters}.

The following theorem, Theorem~\ref{thm:strong_uniqueness_CKLS}, provides the first existence and uniqueness results for 
\begin{itemize}
    \item the T-CKLS process~\cite{mazzonetto2024parameters}, 
    \item and for its special case, the T-CIR process~\cite{decamps2006self,dong_wong_2015,zhang2024bond},
    \item the skew-sticky CIR process~\cite{zhang2024hitting}.
\end{itemize}
Moreover, our theorem provides an alternative proof of the strong existence and uniqueness for the skew CIR process~\cite{tian2018skew}, which had already been proven in~\cite{trutnau2011pathwise}, and our result substantially enlarges the class of processes among one-dimensional diffusions with sticky-skew interfaces.
In addition, our framework allows for a precise analysis of the
boundary behavior of the above process near $0$, see Remark~\ref{rem:comp_0}. 

\begin{theorem}
\label{thm:strong_uniqueness_CKLS}
There exists a unique (in law) weak solution $X$ to the SDE~\eqref{eq:sticky_skew_CKLS} for $\kappa > 0$ (skew-sticky T-CKLS), and a unique strong solution for $\kappa = 0$ (skew T-CKLS).
\end{theorem}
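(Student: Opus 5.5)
We will prove Theorem~\ref{thm:strong_uniqueness_CKLS} by pasting, using Theorem~\ref{thm:pasting} for $\kappa=0$ and Proposition~\ref{prop:weak_pasting} (in its sticky extension from Section~\ref{sec:comments}) for $\kappa>0$. The interface $\theta>0$ is a skew (and possibly sticky) point where $\sigma(\cdot)|\cdot|^{\gamma(\cdot)}$ is bounded away from zero. The degenerate point $0$ lies at positive distance from $\theta$. So we choose the pasting threshold $\vartheta\in(0,\theta)$ with $\varepsilon<\min(\vartheta,\theta-\vartheta)/2$. In this way the compatibility interval $(\vartheta-\varepsilon,\vartheta+\varepsilon)$ avoids both $0$ and $\theta$.

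\emph{Lower auxiliary SDE \eqref{eqn:SDE_2}.} We take $\nu_-\equiv 0$, drift $\mathfrak{b}_-(x)=a_--b_-x$ and diffusion $\mathfrak{G}_-(x)=\sigma_-|x|^{\gamma_-}$ on all of $\mathbb R$. This is a classical CKLS equation with $\gamma_-\in[1/2,1]$ and $a_->0$. We will invoke a Yamada--Watanabe type argument: $|x|^{\gamma_-}$ is $\gamma_-$-H\"older, so $\rho(h)=Ch^{2\gamma_-}$ with $2\gamma_-\ge1$ satisfies $\int 1/\rho=\infty$, and the drift is Lipschitz. This gives pathwise uniqueness. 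Strong existence follows from weak existence plus Yamada--Watanabe. For $x_0>0$, the drift $a_->0$ at $0$ and the comparison theorem (or the classical CIR argument via the Feller test) show that the solution stays in $[0,\infty)$, so $I_-\subseteq[0,\infty)$ and there is no explosion since coefficients grow at most linearly. The state space is $[0,\infty)$ or $(0,\infty)$; we only need that it contains a neighborhood of $\vartheta$.

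\emph{Upper auxiliary SDE \eqref{eqn:SDE_1}.} We take $\nu_+=\beta\delta_\theta$ and $\mathfrak{b}_+,\mathfrak{G}_+$ equal to the T-CKLS coefficients \eqref{eq:drift_TCKLS} on $[\vartheta-\varepsilon,\infty)$. Below that, we extend them in a uniformly elliptic, Lipschitz way, e.g.\ $\mathfrak{G}_+(x)=\sigma_-\max(|x|,\vartheta-\varepsilon)^{\gamma_-}$ for $x<\vartheta-\varepsilon$. On $[\theta,\infty)$ the diffusion coefficient $\sigma_+|x|^{\gamma_+}$ is bounded below by $\sigma_+\theta^{\gamma_+}>0$ and is locally Lipschitz. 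Hence $\mathfrak{G}_+$ is uniformly elliptic and of locally bounded variation, with a single jump at $\theta$. For $\kappa=0$ this is a skew SDE with elliptic, piecewise-Lipschitz coefficients. Strong existence and pathwise uniqueness then follow from Le Gall~\cite{le2006one}, after localization, using linear growth for non-explosion; this is condition $(\sigma(x)-\sigma(y))^2\le|f(x)-f(y)|$ with $f$ increasing. For $\kappa>0$ we will use weak existence and uniqueness in law for the sticky-skew elliptic SDE (\cite{salins2017markov}, or time change of the skew diffusion by $\mathrm{d}t+\kappa\,\mathrm{d}L^\theta$). The state space is $\mathbb R$.

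Hypothesis~\hypref{hyp:pasting} then holds by construction: on $(\vartheta-\varepsilon,\vartheta+\varepsilon)$ both pairs of coefficients equal $a_--b_-x$ and $\sigma_-|x|^{\gamma_-}$, and both measures vanish there. The pasted coefficients \eqref{eq:coeffs} reproduce exactly \eqref{eq:sticky_skew_CKLS}. For $\kappa=0$, Theorem~\ref{thm:pasting} gives strong existence and uniqueness for every $x_0$ in the interior of $I=(I_+\cap(\vartheta-\varepsilon,\infty))\cup(I_-\cap(-\infty,\vartheta+\varepsilon))\supseteq(0,\infty)$. For $\kappa>0$, the sticky version of Proposition~\ref{prop:weak_pasting} gives weak existence and uniqueness in law. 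The compatibility interval does not meet $\theta$, which is exactly the case stated in Section~\ref{sec:comments}.

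The main obstacle is verifying the auxiliary results rigorously at the two singular points. At $0$ the issue is non-negativity and strong well-posedness of the CKLS equation when $\gamma_-\in(1/2,1]$ and the coefficient $|x|^{\gamma_-}$ is not Lipschitz. At $\theta$ the issue is the sticky-skew well-posedness with discontinuous $\sigma$ and linear-growth coefficients, which requires localizing the cited elliptic results. I expect the first to be routine (Yamada--Watanabe plus Feller's test), and would spend most care on citing or adapting a sticky-skew well-posedness result for the $\kappa>0$ upper equation.
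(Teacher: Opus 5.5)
Your proposal is correct and follows essentially the same route as the paper. The paper also uses a pasting threshold strictly between $0$ and $\theta$, a lower classical CKLS equation handled by Yamada--Watanabe, and an upper equation that matches the T-CKLS coefficients near and above $\theta$ and is extended elliptically below; it then concludes with \cite{salins2017markov} plus the sticky extension of Proposition~\ref{prop:weak_pasting} for $\kappa>0$, and with \cite{le2006one} plus Theorem~\ref{thm:pasting} for $\kappa=0$. Keeping the T-CKLS coefficients on all of $[\vartheta-\varepsilon,\infty)$ is in fact the consistent choice: the paper's displayed $\mathfrak{b}_+,\mathfrak{G}_+$ switch to $a_+-b_+x$ and $\sigma_+x^{\gamma_+}$ already at $\theta-\delta$ rather than at $\theta$, which looks like a typo.
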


\begin{proof}
Let us first consider the case $\kappa>0$.

We decompose the dynamics into two auxiliary equations corresponding to the regions separated by the threshold. In this case we choose $\vartheta=\theta-2\delta$ for a given $\delta \in (0, \theta/4)$. On the lower region, we consider the classical CKLS equation
\begin{equation}
X_t = x_0 + \int_0^t (a_- - b_- X_s)\, \mathrm{d}s 
     + \sigma_- \int_0^t X_s^{\gamma_-}\, \mathrm{d}B_s,
\quad t \ge 0,
\label{eq:pasting1_SS_CIR}
\end{equation}
($\mathfrak{b}_-(x):=a_--b_- x$ and $\mathfrak{G}_-(x)=\sigma_-|x|^{\gamma_-}$) while on the upper region we introduce the auxiliary equation
\begin{equation}
\left\{
\begin{array}{ll}
X_t = x_0 + \int_0^t \mathfrak{b}_+(X_s)\,\mathds{1}_{\{X_s \neq \theta \}} \vd s
    + \int_0^t \mathfrak{G}_+(X_s)\mathds{1}_{\{X_s \neq \theta \}} \vd B_s
    + \beta L_t^{\theta}(X),  \\[6pt]
\displaystyle
\int_0^t \mathds{1}_{\{X_s \neq \theta\}}\,\vd s = \kappa L_t^{\theta}(X),
\end{array}
\right.
\label{eq:pasting2_SS_CIR}
\end{equation}
where
\begin{equation}
\mathfrak{b}_+(x) =
\begin{cases}
a_+ - b_+ x, & x \ge \theta-\delta,\\
\mathfrak{b}_-(x), & \theta-3\delta \leq x < \theta-\delta,\\
a_- - b_- (\theta-3\delta), 
& x < \theta-3\delta, 
\end{cases}
\end{equation}
\begin{equation}
\mathfrak{G}_+(x) =
\begin{cases}
\sigma_+ x^{\gamma_+}, & x \ge \theta-\delta,\\
\mathfrak{G}_-(x), & \theta-3\delta \leq x < \theta-\delta,\\
\sigma_- \left(\theta-3\delta\right)^{\gamma_-}, & x < \theta-3\delta.
\end{cases}
\end{equation}
Strong existence and uniqueness of \eqref{eq:pasting1_SS_CIR} follows from~\cite[Theorem~1]{yamada1971uniqueness}, while weak existence and uniqueness in law for~\eqref{eq:pasting2_SS_CIR} follow from~\cite[Theorem~3.4]{salins2017markov}. 
Moreover, above the interface point~$\theta$, the solution behaves as a classical CKLS process and therefore does not explode (see Proposition~\ref{prop:no_explosion_linear} in Appendix \ref{sec:app:classics}, where classical results are recalled).

Hypotheses~\hypref{hyp:pasting} and~\hypref{hyp:weak_pasting} are satisfied (with compatibility strip $(\vartheta-\delta,\vartheta+\delta)$).  
As a consequence of Proposition~\ref{prop:weak_pasting}, weak existence and uniqueness in law for~\eqref{eq:sticky_skew_CKLS} are obtained.

Let us now consider $\kappa=0$. In this case the solution process of \eqref{eq:pasting2_SS_CIR} is no longer sticky. Additionally, we can remove the indicator $\mathds{1}_{\{X_s \neq \theta\}}$ in the drift and volatility terms, as a consequence of \cite[Corollary 1.6 and Theorem 1.7, Chapter VI]{revuz2013continuous}. In fact, one can prove that any weak solution to \eqref{eq:pasting2_SS_CIR} spends no time at $x$ for all $x \in \mathbb{R}$. This means that the process is a skew process, and strong existence and uniqueness follow from \cite[Theorem 2.3]{le2006one}. 

Consequently, Hypothesis~\hypref{hyp:pasting} and Hypothesis~\hypref{hypothesis_strong} are satisfied. Theorem~\ref{thm:pasting} yields the existence of a pathwise unique global strong solution to~\eqref{eq:TCKLS}.
\end{proof}

\begin{remark}[Behavior at $0$] \label{rem:comp_0}
Our proof shows that the behavior of the process around $0$ is the same as the one of a standard CKLS process. This means that the sticky-skew T-CKLS is $\mathbb{P}$-a.s.~non-negative for all $t\geq 0$, and it reaches $0$ if and only if $\gamma_-=1/2$ and $\sigma_-^2>2a_-$, in which case $0$ is an instantaneously reflecting boundary.
\end{remark}

\appendix

\section{Background and preliminary results on SDEs} \label{sec:app:classics}

\subsection{Definitions and non-explosion}
We recall some definitions in the setting of the generalized one-dimensional SDE~\eqref{eqn:SDE_general}.

\begin{remark}[Well-definedness of the local-time term]
If $\nu$ is locally finite, then for each $t\ge0$ the integral $\int_{\mathbb R} L_t^x(X)\,\nu(\mathrm{d}x)$ is well-defined,
since $x\mapsto L_t^x(X)$ has compact support.
\end{remark}

For the definitions of weak solution, strong solution, uniqueness in law and pathwise uniqueness we refer the reader to the book of~\cite{ks}. Here, we briefly recall some notions.  

\begin{definition}[Weak solution to \eqref{eqn:SDE_general}]
A \emph{weak solution} to \eqref{eqn:SDE_general} is the triplet consisting of a filtered probability space
$(\Omega,\mathcal F,(\mathcal F_t)_{t\ge0},\mathbb P)$ supporting a 
$(\mathcal F_t)$-Brownian motion $B$ and a continuous $(\mathcal F_t)$-adapted process $X$
such that \eqref{eqn:SDE_general} holds $\mathbb P$-a.s. for all $t\ge 0$.
\end{definition}

When dealing with weak solution, we will omit the filtered probability space and only denote the pair $(X,B)$.

\begin{definition}[Strong solution to \eqref{eqn:SDE_general}]
A weak solution $(X,B)$ is called a \emph{strong solution} if $X$ is adapted to the usual
augmentation of the filtration generated by $B$.
\end{definition}

\begin{definition}[Uniqueness in law]
We say that \eqref{eqn:SDE_general} enjoys \emph{uniqueness in law} if any two weak solutions
with the same initial distribution $\mu$ have the same law $\mathbb{P}_{\mu}$ on the path space
$\mathcal C([0,\infty),\mathbb R)$.
\end{definition} 

\begin{definition}[Pathwise uniqueness]
We say that \eqref{eqn:SDE_general} enjoys \emph{pathwise uniqueness} if whenever
$(X,B)$ and $(\widetilde X,B)$ are two weak solutions defined on the same filtered probability space,
driven by the same Brownian motion $B$, and such that $X_0=\widetilde X_0$ $\mathbb{P}$-a.s.,
then $X$ and $\widetilde X$ are indistinguishable.
\end{definition}

When working with possibly explosive solutions, we use the usual localization via the exit times
$\tau_n:=\inf\{t\ge 0:\ |X_t|\ge n\}$ and the explosion time $\tau_\infty:=\lim_{n\to\infty}\tau_n$
(with the convention $\inf\emptyset=\infty$).

When $\nu\equiv 0$, equation \eqref{eqn:SDE_general} reduces to the standard SDE
\begin{equation}\label{eq:SDE_0}
    X_t = X_0 + \int_0^t b(X_s)\,\mathrm{d}s + \int_0^t \sigma(X_s)\,\mathrm{d}B_s.
\end{equation}
In this case several useful results are available for non-explosion and existence and uniqueness.

\begin{prop}[Non-explosion under linear growth {\cite[Proposition~1.5.5.3]{jeanblanc2009mathematical}}]
\label{prop:no_explosion_linear}
Assume that $b$ and $\sigma$ satisfy a linear growth condition and $X_0 \in L^2$:
there exists $K>0$ such that
\[
|b(x)|^2 + |\sigma(x)|^2 \le K(1+|x|^2), \qquad \forall x\in\mathbb{R}.
\]
Then any (local) weak solution to \eqref{eq:SDE_0} is non-explosive and $\tau_\infty=\infty$ $\mathbb P$-a.s.
\end{prop}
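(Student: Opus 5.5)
This is the classical non-explosion result for \eqref{eq:SDE_0} under linear growth. The plan is a Lyapunov/Gronwall argument with the function $V(x)=1+x^2$ and the localization times $\tau_n=\inf\{t\ge0:|X_t|\ge n\}$ introduced above. Let $(X,B)$ be a (local) weak solution defined up to $\tau_\infty=\lim_n\tau_n$.

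\textbf{Step 1: Itô's formula up to $\tau_n\wedge t$.} Apply Itô's formula to $X^2$ on $[0,t\wedge\tau_n]$:
\begin{equation}
X_{t\wedge\tau_n}^2 = X_0^2 + \int_0^{t\wedge\tau_n}\bigl(2X_sb(X_s)+\sigma(X_s)^2\bigr)\vd s + \int_0^{t\wedge\tau_n} 2X_s\sigma(X_s)\vd B_s .
\end{equation}
On $[0,\tau_n]$ we have $|X_s|\le n$, and $\sigma$ is bounded on $[-n,n]$ by the growth condition. Hence the stochastic integral is a true martingale and has zero expectation.

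\textbf{Step 2: Gronwall.} Use $2xb(x)\le x^2+b(x)^2$ together with the linear growth hypothesis to get
\begin{equation}
2xb(x)+\sigma(x)^2\le x^2+K(1+x^2)\le (1+K)(1+x^2).
\end{equation}
Set $\phi_n(t):=\mathbb E[1+X_{t\wedge\tau_n}^2]$. Since $\int_0^{t\wedge\tau_n}(1+X_s^2)\vd s\le\int_0^t(1+X_{s\wedge\tau_n}^2)\vd s$, taking expectations gives
\begin{equation}
\phi_n(t)\le \mathbb E[1+X_0^2]+(1+K)\int_0^t\phi_n(s)\vd s .
\end{equation}
Here $\phi_n$ is finite because $X_0\in L^2$ and the stopped process is bounded by $\max(n,|X_0|)$. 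Gronwall then yields the bound
\begin{equation}
\phi_n(t)\le \mathbb E[1+X_0^2]\,e^{(1+K)t},
\end{equation}
which is uniform in $n$.

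\textbf{Step 3: conclusion.} By path continuity, on $\{\tau_n\le t\}$ (with $n>|X_0|$) we have $|X_{t\wedge\tau_n}|=n$. Therefore
\begin{equation}
(1+n^2)\,\mathbb P(\tau_n\le t,\ |X_0|<n)\le \mathbb E[1+X_0^2]e^{(1+K)t}.
\end{equation}
Let $n\to\infty$. The right-hand side stays bounded, and $\mathbb P(|X_0|\ge n)\to0$, so $\mathbb P(\tau_\infty\le t)=\lim_n\mathbb P(\tau_n\le t)=0$ for every $t$. Hence $\tau_\infty=\infty$ a.s.

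\textbf{Main obstacle.} The only delicate point is the martingale property of the stopped stochastic integral, since $\sigma$ is merely measurable. Local boundedness of $\sigma$, which follows from the growth bound, settles it.

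We also note that the assumption $X_0\in L^2$ is not really needed: conditioning on $\{|X_0|\le m\}$ and letting $m\to\infty$ removes it.
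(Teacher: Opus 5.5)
Your proof is correct. It is the standard argument behind this statement: It\^o's formula for $X^2$ stopped at $\tau_n$, the bound $2xb(x)+\sigma(x)^2\le(1+K)(1+x^2)$, Gronwall's lemma, and a Chebyshev estimate on $\{\tau_n\le t\}$. The paper gives no proof of its own and only cites the textbook proposition, whose proof is this same argument. Two small points are worth stating explicitly: on $\{|X_0|\ge n\}$ one has $\tau_n=0$, so the bound $|X_s|\le n$ should be read on $(0,\tau_n]$, which is harmless; and your removal of the assumption $X_0\in L^2$ is valid because $\{|X_0|\le m\}\in\mathcal F_0$, so the stopped stochastic integral still has zero mean on that event.
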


\begin{theorem}[Yamada-Watanabe {\cite{yamada1971uniqueness}}]
\label{thm:YW}
For the classical SDE \eqref{eq:SDE_0}:
\begin{enumerate}[label=\upshape(\roman*)]
    \item pathwise uniqueness implies uniqueness in law,
    \item weak existence together with pathwise uniqueness implies strong existence.
\end{enumerate}
\end{theorem}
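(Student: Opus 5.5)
We would follow the classical Yamada--Watanabe construction for the equation~\eqref{eq:SDE_0}: realize two weak solutions on a common canonical space driven by one Brownian motion, then apply pathwise uniqueness. Let $(X^{(1)},B^{(1)})$ and $(X^{(2)},B^{(2)})$ be weak solutions with the same initial law $\mu$, possibly on different filtered probability spaces. Put $Y^{(i)}:=X^{(i)}-X^{(i)}_0$. Then the triple $(X^{(i)}_0,B^{(i)},Y^{(i)})$ has a law $P_i$ on $\Theta:=\mathbb R\times\mathcal C([0,\infty),\mathbb R)\times\mathcal C([0,\infty),\mathbb R)$. Its marginal in the first two coordinates is $\mu\otimes W$, where $W$ is Wiener measure, because $B^{(i)}$ is independent of $\mathcal F_0\ni X^{(i)}_0$. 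Since $\mathcal C$ is Polish, we disintegrate $P_i(\mathrm dx,\mathrm dw,\mathrm dy)=\mu(\mathrm dx)\,W(\mathrm dw)\,Q_i(x,w;\mathrm dy)$. On $\Omega:=\mathbb R\times\mathcal C\times\mathcal C\times\mathcal C$ we then define
\begin{equation}
\mathbb P(\mathrm dx,\mathrm dw,\mathrm dy_1,\mathrm dy_2):=\mu(\mathrm dx)\,W(\mathrm dw)\,Q_1(x,w;\mathrm dy_1)\,Q_2(x,w;\mathrm dy_2),
\end{equation}
with the filtration generated by the coordinates up to time $t$, augmented as usual.

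The key technical step is the following measurability property. For every $A$ in $\mathcal G_t:=\sigma(y_s,\ s\le t)$, the map $(x,w)\mapsto Q_i(x,w;A)$ is measurable with respect to the ($\mu\otimes W$-completed) $\sigma$-field $\sigma(x)\vee\mathcal G_t(w)$. To prove it, we would use that on the original space the increments $(B^{(i)}_{t+s}-B^{(i)}_t)_{s\ge0}$ are independent of $\mathcal F_t\supseteq\sigma(X^{(i)}_0,B^{(i)}_{\cdot\wedge t},Y^{(i)}_{\cdot\wedge t})$. Conditioning first on the past of $w$ and then integrating out its future increments gives the claim. From this property we deduce that the coordinate process $w$ is an $(\mathcal F_t)$-Brownian motion under $\mathbb P$: its future increments are independent of $(x,w_{\cdot\wedge t},y_{1,\cdot\wedge t},y_{2,\cdot\wedge t})$.

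Next we check that each $x+y_i$ solves~\eqref{eq:SDE_0} driven by $w$. The joint law of $(x,w,y_i)$ under $\mathbb P$ equals $P_i$. The identity $y_i(t)=\int_0^t b(x+y_i)\,\mathrm ds+\int_0^t\sigma(x+y_i)\,\mathrm dw_s$ holds almost surely under $P_i$, because stochastic integrals are limits in probability of Riemann-type sums and are therefore determined by the joint law. The Brownian property of $w$ with respect to the enlarged filtration ensures that the stochastic integral has the same meaning on $\Omega$. Pathwise uniqueness then yields $y_1=y_2$ $\mathbb P$-a.s., hence $P_1=P_2$, which proves (i).

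For (ii), apply the construction with both solutions taken to be the given weak solution, so $Q_1=Q_2=Q$. The event $\{y_1=y_2\}$ having full $Q(x,w;\cdot)\otimes Q(x,w;\cdot)$-measure for $\mu\otimes W$-almost every $(x,w)$ forces $Q(x,w;\cdot)$ to be a Dirac mass $\delta_{F(x,w)}$. The map $F$ is measurable, and by the measurability property above $F(x,\cdot)_{\cdot\wedge t}$ is $\mathcal G_t$-measurable up to null sets. Hence $X=x+F(x,B)$ on any space carrying a Brownian motion $B$ and an independent $X_0$, which gives a strong solution. We expect the main obstacle to be the measurability and filtration step of the second paragraph (the Brownian property of $w$ on the product space); the rest is bookkeeping with regular conditional distributions.
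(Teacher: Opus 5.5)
The paper does not prove this statement itself: it is recalled as a classical result with a citation to \cite{yamada1971uniqueness}. Your proposal is exactly the standard Yamada--Watanabe argument as presented e.g.\ in \cite[Section~5.3]{ks} (joint space from the regular conditional laws $Q_i$, $\sigma(x)\vee\mathcal G_t(w)$-measurability of $Q_i(\cdot;A)$, the Brownian property of $w$, pathwise uniqueness, and the Dirac-mass argument for (ii)), and it is correct. The one step to phrase more carefully is the transfer of the equation to the new space: since $\sigma$ is only measurable, literal left-point Riemann sums need not converge, so approximate $\sigma(X_s)$ by simple progressively measurable functionals of the path; these still make $\int_0^t\sigma(X_s)\,\mathrm{d}w_s$ determined by the joint law, and the same transfer also verifies that $X_0+F(X_0,B)$ solves the equation in (ii).
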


These results can also be considered in the case the measure $\nu\not\equiv 0$ satisfies Hypothesis~\hypref{hyp:nu}. Indeed, in the next subsection, we show that, under quite mild assumptions on $b$ (and $\nu$) it is possible to reduce~\eqref{eqn:SDE_general} to \eqref{eq:SDE_0} and this to a driftless SDE.

\subsection{Recall of weak existence and uniqueness results} 
\label{sec:preliminary}

For our main results, we assume that weak existence and uniqueness hold for some SDE. 
In this section, we recall some results which ensure weak existence and uniqueness for the generalized SDE~\eqref{eqn:SDE_general} in the case the diffusion coefficient is not necessarily uniformly elliptic. 

First, the already mentioned~\cite[Theorem 4.48]{engel_schm_89_3} provides existence and uniqueness up to the first time the process reaches the set $E_{\sigma}$, whose definition is recalled in some lines.
There are several results based on well-posedness of martingale problems~\cite{stroock2007multidimensional}, or Dirichlet forms~\cite{fukushima2011dirichlet}.

We now recall another standard approach which relates solution to SDEs~\eqref{eqn:SDE_general} to driftless SDEs via a space transform.

Therefore, let us first consider the driftless SDE
\begin{equation}
  X_t=X_0+ \int_0^t\sigma(X_t)\vd B_t,\quad t\geq 0,
\label{SDE_drift_removed}
\end{equation}
with $\sigma$ measurable function. 
Engelbert and Schmidt proposed famous criteria for existence and uniqueness based on the following quantities:  
\begin{align}
    N_\sigma & :=\{x\in \mathbb{R} \colon \sigma(x)=0\}, \\
    E_\sigma &:=\{x\in \mathbb{R} \colon \int_{\mathcal{U}(x)}{(\sigma(y))^{-2}} \vd y=+\infty \ \forall \mathcal{U}(x) \text{ open set containing }x\}.
\end{align}

Before specifying the criteria, let us recall the important notion of {\it fundamental solution} (cf.~\cite[Definition 4.16]{engel_schm_89_3}). 
A (weak) solution is called a fundamental solution if $\sigma^2(X)>0$ on $[0, \tau_\sigma]$ for $\lambda \times \mathbb P_x$ almost every $(t,\omega)$, where $\lambda$ is the Lebesgue measure and $\tau_\sigma :=\inf\{s>0 \colon X_s \in E_\sigma \}$. 
Basically, this means that the diffusion coefficient does not vanish while the process has not reached $E_\sigma$: if $N_\sigma \subseteq E_\sigma$ this does not say much, if instead $E_\sigma \subset N_\sigma$ this provides information about the time spent on $N_\sigma\setminus E_\sigma$ until the process reaches $E_\sigma$. 
Note that if $\mathbb{P}(\tau_\sigma=\infty)=1$, then $\mathbb{P}$-a.s. the process spends no time in $N_\sigma$. 

\cite[Proposition 4.11]{engel_schm_89_3} ensures that any (possibly existing) solution to~\eqref{SDE_drift_removed} is non-explosive. (To be precise, the definition of weak solution in~\cite{engel_schm_89_3} is given up to explosion time, which is infinity in this case.) 
And~\cite[Theorem 4.17]{engel_schm_89_3} states that there exists a solution to~\eqref{SDE_drift_removed} if and only if there exists a fundamental solution,
if and only if 
$E_\sigma \subseteq N_\sigma$.
Moreover, the fundamental solution is unique in law \cite[Theorem 4.22]{engel_schm_89_3}. 
\cite[Corollary 4.23]{engel_schm_89_3} states that 
the fundamental solution is a strong Markov continuous local martingale with speed measure $m(\!\vd x) := 1/\sigma^2(x) \vd x$. The same result states that there exists a \emph{unique} solution to~\eqref{SDE_drift_removed} if and only if $E_\sigma=N_\sigma$ and the solution is a fundamental one. 
Furthermore, $E_\sigma=\emptyset$ is equivalent to the absence of absorbing points (see~\cite[Proposition 4.20]{engel_schm_89_3}).
To summarize, 
\begin{enumerate}
    [label=\textbf{\upshape ES\arabic*}]
    \item \label{item:es:0} If $N_\sigma=E_\sigma=\emptyset$, weak existence and uniqueness in law hold for~\eqref{SDE_drift_removed}. The unique solution is a fundamental solution, it is non-explosive, and has no absorbing points. 
    \item \label{item:es:equal} If $E_\sigma = N_\sigma \neq \emptyset$, weak existence and uniqueness in law 
    hold for~\eqref{SDE_drift_removed}. 
    The unique (fundamental) solution $X$ is non-explosive. Moreover, $\mathbb{P}$-a.s.~for all $t\geq 0$,
    $\int_0^{t\wedge \tau_\sigma} \ind{N_\sigma}(X_s) \vd s= \int_0^{t\wedge \tau_\sigma} \ind{E_\sigma}(X_s) \vd s=0$. 
    \item \label{item:es:diff} If $E_\sigma=\emptyset$ and $N_\sigma \neq \emptyset$, weak existence and uniqueness in law of fundamental solution hold for~\eqref{SDE_drift_removed}. The unique fundamental solution is non-explosive and has no absorbing points. Therefore the fundamental solution spends $\mathbb{P}$-a.s.~no time at the zeros of $N_\sigma$: $\mathbb{P}$-a.s.~for all $t\geq 0$ it holds that $\int_0^t \ind{N_\sigma}(X_s) \vd s =0$. 
    Note that, if for instance $\sigma=\ind{\mathbb{R}\setminus\{0\}}$, then any sticky Brownian motion with sticky interface at $0$ is a solution of the SDE~\eqref{eq:SDE_0} but not a fundamental one.
    \item \label{item:es:diff0} If $E_\sigma\neq\emptyset$ and $E_\sigma \subset N_\sigma$, weak existence and uniqueness in law of fundamental solution hold for~\eqref{SDE_drift_removed}. The unique fundamental solution $X$ is non-explosive and has absorbing points $E_\sigma$. Moreover, $\mathbb{P}$-a.s.~for all $t\geq 0$, $\int_0^{t\wedge \tau_\sigma} \ind{N_\sigma}(X_s) \vd s=0$.
\end{enumerate}
In all the above cases, the fundamental solution is a strong Markov continuous local martingale with speed measure $m(\!\vd x)$. Note that, in the literature, the speed measure may differ by a factor 2 due to different conventions.

It is well known that if there exists $(X,B)$ a solution to~\eqref{eq:SDE_0} taking values in an interval $I$ and that the measurable and locally bounded coefficients $b$ and $\sigma$ are such that $\sigma$ does not identically vanish on $I$ and
\begin{equation}\label{eq:scale_integrability}
    \frac{b}{\sigma^2}\in L^1_{\mathrm{loc}}(I),
    \qquad
    \frac{1}{\sigma^2}\in L^1_{\mathrm{loc}}(I\setminus E_\sigma),
\end{equation}
then the function 
\begin{equation}\label{eq:scale_def}
    s(x):=\exp\left(-\int_{r}^{x} 2\frac{b(u)}{\sigma^2(u)}\,\mathrm{d}u\right),
    \qquad
    S(x):=\int_{r}^{x} s(y)\vd y,
\end{equation}
for some reference point $r\in I$ are locally well defined. 
Moreover, the function $S$ is strictly increasing on each connected component where it is finite
and therefore admits a continuous inverse $R$ on its range. $S$ is called~\emph{scale function}.
Applying the generalized It\^o formula to the process $Y_t:=S(X_t)$ eliminates the drift term and yields the driftless equation
\begin{equation}\label{eq:driftless_after_scale}
    Y_t = S(x) + \int_0^t \widetilde \sigma(Y_s)\vd B_s,
    \qquad
    \widetilde \sigma(y):=(s\circ R)(y)(\sigma\circ R)(y),
\end{equation}
up to the exit time from the region where $S$ is defined.
Thanks to Engelbert and Schmidt criteria, we can study existence and uniqueness for SDE~\eqref{eq:driftless_after_scale} and then transform it back to a solution for~\eqref{eq:SDE_0}: $X=R(Y)$. Clearly there are subtle steps, and some a priori knowledge on solutions to~\eqref{eq:SDE_0} may help or be necessary to deal with the solution to~\eqref{eq:driftless_after_scale}. We illustrate these subtleties with the example of T-CKLS model in Appendix~\ref{sec:app:CKLS}.

The same approach can be implemented to remove the distributional drift coming from $\nu$ satisfying Hypothesis~\hypref{hyp:nu}. Indeed the scale function can be defined in this case as well.
In particular, we need this approach in our proof of Theorem~\ref{Theorem_uniqueness_thresh} in Appendix~\ref{ssec:app:proof:th:engeldrift}, therefore we detail it there.

We just recalled some classical results which help establishing weak existence and uniqueness (in law) for some SDEs of the kind we consider. In some specific case, we have just recalled that the solution is a Strong Markov process. In the next remark we mention that this property is common to a larger class of processes.  
\begin{remark}[Strong Markov property] \label{rem:smarkov_general}
When weak existence and uniqueness (in law) hold for the systems of SDEs considered in this paper (also the sticky ones), some classical results ensure that the unique solution 
is strong Markov with respect to the usual (augmentation of its) natural filtration. The interested reader can refer, for instance, to~\cite[Corollary 4.38]{engel_schm_89_3} or the discussion around Theorem 6.2.2 in~\cite{stroock2007multidimensional} when $\nu\equiv 0$. 
Moreover, by the theory of general linear diffusions, there exists a characterization via (possibly killed) functions of time-change of Brownian motion. This is related to the characterization of linear diffusions via \emph{speed measure}, \emph{scale function} and \emph{killing measure} which yields that the solution can be written as a function of a time-changed Brownian motion. 
\end{remark}

\section{Proofs of the Main Results} \label{sec:app:proofs}

As already mentioned, there are other results which could be used for weak existence and uniqueness via pasting approach, for instance suitable martingale problems. In this section, we provide a proof to Proposition~\ref{prop:weak_pasting} which focuses on how our Hypothesis~\hypref{hyp:pasting} intervenes. We do not insist much on technical points. The proof of Theorem~\ref{thm:pasting} is more detailed because it is our main contribution for the pasting approach. 

\subsection{Proof of Proposition~\ref{prop:weak_pasting}}

Let us fix $x\in \text{Interior}(I)$. By Hypothesis~\hypref{hyp:pasting}, $\vartheta-\varepsilon/2, \vartheta\in I_-\cap I_+$. 
It is convenient to identify the solutions to the SDEs~\eqref{eqn:SDE_1}-\eqref{eqn:SDE_2} with their law on the canonical space $\mathcal{C}([0,\infty),\RR)$, which will be denoted by $\PP_{y}^{\pm}$ when $y$ denotes the starting point.
Without loss of generality, we assume that $x\in I_-$. 

\paragraph{Proof of the weak existence.} 
Let us work on a product probability space carrying independent copies of the (unique) weak solutions to~\eqref{eqn:SDE_1}-\eqref{eqn:SDE_2}: $(X^{(-,0)},B^{(-,0)}) \sim \PP_x^-$, and 
for $n>1$,
\begin{equation}
(X^{(+,n)},B^{(+,n)}) \sim \PP_{\vartheta}^+, 
\quad 
(X^{(-,n)},B^{(-,n)}) \sim \PP_{\vartheta-\varepsilon/2}^-,
\end{equation}
all independent. 
For the formal construction of the underlying probability space and filtration,
and for general results on pasting weak solutions at stopping times,
we refer to the Appendix of \cite{lamberton2013optimal}.

Then, we define the alternating hitting times (with $\inf\emptyset=+\infty$): $\zeta_0:=\inf\{t\ge0:\ X^{(-,0)}_t=\vartheta\}$
and for $n\geq 1$,
\begin{equation}
\eta_{n}:=\inf\{t\ge0:\ X^{(+,n)}_t=\vartheta-\varepsilon/2\}, \qquad
\zeta_{n}:=\inf\{t\ge0:\ X^{(-,n)}_t=\vartheta\}.
\end{equation}
We then define
\begin{equation}
\tau_0:=0,\quad \tau_1:=\zeta_0,\quad \tau_2:=\tau_1+\eta_1,\quad \tau_3:=\tau_2+\zeta_1,\quad \ldots
\end{equation}
If one of the above $\tau_n$ is $+\infty$, i.e.~$\eta_{\lfloor n/2 \rfloor}=+\infty$ or $\zeta_{\lfloor n/2 \rfloor}=+\infty$, we stop switching from that point on.
Otherwise, on the event where all are finite, each $\tau_n$ is strictly positive by continuity of the trajectory of the process. 

Moreover, the sequences $(\eta_n)_{n\ge1}$ and $(\zeta_n)_{n\ge1}$ are i.i.d.\ within each family (and independent across families).   
Since $\eta_n$, $\zeta_n >0$ $\mathbb{P}$-a.s.\ and there exists $\varepsilon>0$ such that $\mathbb{P}(\eta_1>\varepsilon)>0$ and $\mathbb{P}(\zeta_1>\varepsilon)>0$, the Borel-Cantelli lemma implies that
$\sum_{n\ge1} \eta_n + \sum_{n\ge1} \zeta_n = +\infty \quad \mathbb{P}\text{-a.s.}$
and therefore the sequence $(\tau_n)_{n\ge0}$ cannot accumulate in finite time.

Finally, we define the global process $X$ by pasting the pieces on successive intervals:
\begin{equation}
X_t :=
\begin{cases}
X^{(-,0)}_t, & t\in[0,\tau_1),\\
X^{(+,1)}_{t-\tau_1}, & t\in[\tau_1,\tau_2),\\
X^{(-,1)}_{t-\tau_2}, & t\in[\tau_2,\tau_3),\\
\hspace{1.3em}\vdots &
\end{cases}
= \sum_{k=0}^{\infty} X^{(-,k)}_{t-\tau_{2k}} \ind{[\tau_{2k},\tau_{2k+1})}(t) 
+ \sum_{k=1}^{\infty} X^{(+,k)}_{t-\tau_{2k-1}} \ind{[\tau_{2k-1},\tau_{2k})}(t)
\end{equation}
and define the driving noise $B$ by concatenating the independent Brownian motions with the usual shift so that $B$
is continuous and has independent increments:
\begin{equation}
B_t :=
\begin{cases}
B^{(-,0)}_t, & t\in[0,\tau_1),\\
B^{(-,0)}_{\tau_1}+B^{(+,1)}_{t-\tau_1}, & t\in[\tau_1,\tau_2),\\
B^{(-,0)}_{\tau_1}+B^{(+,1)}_{\tau_2-\tau_1}+B^{(-,1)}_{t-\tau_2}, & t\in[\tau_2,\tau_3),\\
\hspace{1.3em}\vdots &
\end{cases}
= \sum_{j=0}^\infty \left( B^{(-,j)}_{t \wedge \tau_{2j+1}- t \wedge \tau_{2j} } + B^{(+,j)}_{t \wedge \tau_{2(j+1)}- t \wedge \tau_{2j+1} } \right).
\end{equation}
On each interval
the pair $(X,B)$ solves either~\eqref{eqn:SDE_2} or~\eqref{eqn:SDE_1}. Since the coefficients coincide on
$(\vartheta-\varepsilon,\vartheta+\varepsilon)$ (Hypothesis~\hypref{hyp:pasting}), this yields that $(X,B)$ solves
the threshold SDE~\eqref{eqn:SDE_general} with coefficients~\eqref{eq:coeffs}. 
Finally, each pasted segment stays in its own state space, which is either $I_-$ or $I_+$, and the switching
only occurs at $\vartheta-\varepsilon/2$ and $\vartheta$, which belong to the interior of both state spaces. By construction of $I$, we obtain
$\PP(X_t\in I\ \forall t\ge0)=1$.

\paragraph{Proof of the uniqueness in law.}
Let $(\tilde X,\tilde B)$ be any weak solution to~\eqref{eqn:SDE_general} with $\tilde X_0=x$.
Without loss of generality, assume that $x\in I_-$.
We define recursively the alternating switching times: for $n\in \mathbb{N}$, 
\begin{equation}
\tilde\tau_0:=0,\qquad
\tilde\tau_{2n+1}:=\inf\{t\ge \tilde\tau_{2n}:\ \tilde X_t=\vartheta\},\qquad
\tilde\tau_{2n+2}:=\inf\{t\ge \tilde\tau_{2n+1}:\ \tilde X_t=\vartheta-\varepsilon/2\},
\end{equation}
with the convention $\inf\emptyset= \infty$.
If $\tilde\tau_k=\infty$ for some $k$, we set $\tilde\tau_j:=\infty$ for all $j\ge k$.

By continuity of $\tilde X$, for every $n\ge0$,
\[
\tilde X_t<\vartheta\quad \text{for }t\in[\tilde\tau_{2n},\tilde\tau_{2n+1}),
\qquad
\tilde X_t>\vartheta-\varepsilon/2\quad \text{for }t\in[\tilde\tau_{2n+1},\tilde\tau_{2n+2}).
\]
Hence, on $[\tilde\tau_{2n},\tilde\tau_{2n+1})$ the coefficients of~\eqref{eqn:SDE_general} coincide along the path with $(\mathfrak b_-,\mathfrak G_-)$, while on $[\tilde\tau_{2n+1},\tilde\tau_{2n+2})$ they coincide along the path with $(\mathfrak b_+,\mathfrak G_+)$. The overlap assumption ensures that there is no ambiguity at the interface.

We claim that, for every $k\ge0$, the law of the stopped process
$
    (\tilde X_{t\wedge \tilde\tau_k})_{t\ge0}
$ 
is uniquely determined. For $k=1$, this follows from weak uniqueness for the auxiliary equation on $I_-$, since up to $\tilde\tau_1$ the process evolves in the region where the coefficients agree with $(\mathfrak b_-,\mathfrak G_-)$.

Assume now that the law of $(\tilde X_{t\wedge \tilde\tau_k})_{t\ge0}$ is uniquely determined for some $k\ge1$. Consider the conditional law of the shifted pair
$
    \bigl(\tilde X_{\tilde\tau_k+t},\,\tilde B_{\tilde\tau_k+t}-\tilde B_{\tilde\tau_k}\bigr)_{t\ge0}
$
given $\mathcal F_{\tilde\tau_k}$. Up to the next switching time $\tilde\tau_{k+1}$, this conditional law is a weak solution of the relevant auxiliary SDE started from $\tilde X_{\tilde\tau_k}$: if $k$ is odd, then $\tilde X_{\tilde\tau_k}=\vartheta$ on $\{\tilde\tau_k<\infty\}$ and the coefficients coincide with $(\mathfrak b_+,\mathfrak G_+)$ until $\tilde\tau_{k+1}$; if $k$ is even, then $\tilde X_{\tilde\tau_k}=\vartheta-\varepsilon/2$ on $\{\tilde\tau_k<\infty\}$ and the coefficients coincide with $(\mathfrak b_-,\mathfrak G_-)$ until $\tilde\tau_{k+1}$. By weak uniqueness for the corresponding auxiliary equation, the conditional law of the segment between $\tilde\tau_k$ and $\tilde\tau_{k+1}$ is therefore uniquely determined. Since the law up to $\tilde\tau_k$ is unique by the induction hypothesis, the law up to $\tilde\tau_{k+1}$ is unique as well.

Thus, by induction, for every $k\ge0$ the law of $(\tilde X_{t\wedge \tilde\tau_k})_{t\ge0}$ is uniquely determined. Moreover, the sequence $(\tilde\tau_k)_{k\ge0}$ cannot accumulate in finite time: indeed, if $\tilde\tau_k\to T<\infty$, then by continuity
\[
\tilde X_{\tilde\tau_{2n+1}}=\vartheta \to \tilde X_T,
\qquad
\tilde X_{\tilde\tau_{2n+2}}=\vartheta-\varepsilon/2 \to \tilde X_T,
\]
which is impossible since $\vartheta\neq \vartheta-\varepsilon/2$. Hence $\tilde\tau_k\uparrow\infty$ $\mathbb{P}$-a.s. Therefore, by passage to the limit, the full law of $\tilde X$ is uniquely determined. In particular, it coincides with the law of the concatenated process constructed in the proof of weak existence.

\subsection{Proof of Theorem \ref{thm:pasting}}
Given a probability space, endowed with a Brownian motion $B$, we denote, respectively, by $X^{(+,y,B)}$ and $X^{(-,y,B)}$ the strong solution to the SDEs~\eqref{eqn:SDE_1} and \eqref{eqn:SDE_2} starting from a given $y$ in the interior of the respective state space and driven by the given Brownian motion $B$. 
Since Hypothesis~\hypref{hyp:pasting} holds, the threshold $\vartheta$ lies in the state space of both processes, this is the case for $\vartheta-\varepsilon/2$ as well.  

From now on, we consider a probability space endowed with a Brownian motion $B$. 

Assume, without loss of generality, that $x\in \text{Interior}(I) \cap (-\infty,\vartheta)$. If $x\in \text{Interior}(I) \cap [\vartheta,+\infty)$, the proof is analogous. 

\paragraph{Proof of the strong existence.} 

\textit{Step 1: Definition of the alternating stopping times.}
Let us define the sequence of alternating stopping times $(\tau_n)_{n \ge 0}$ recursively as follows. (We use the convention $\inf\emptyset=+\infty$.)

\begin{enumerate}[label=(\roman*)]
\item Set $\tau_0:=0$, $B^{(1)}:=B$ and define
\begin{equation}
\tau_1 := \inf\{t \ge 0 : X^{(-,x,B^{(1)})}_{t-\tau_0} \notin (-\infty, \vartheta]\},
\end{equation}
the first exit time of $(-\infty,\vartheta]$ for $X^{(-,x,B^{(1)})}$ (by continuity, it coincides with the hitting time (from below) of $\vartheta$). 

\item Next, if $\tau_1=+\infty$, we stop, otherwise set $B^{(2)}_t := B_{t+\tau_1} - B_{\tau_1}$ and 
\begin{equation}
\tau_2 := \inf\{t \ge \tau_1 : X^{(+, \vartheta,B^{(2)})}_{t-\tau_1} \notin [\vartheta-\varepsilon/2, \infty)\},
\end{equation}
the first hitting time of $X^{(+, \vartheta,B^{(2)})}$ at the point $\vartheta-\varepsilon/2$ after time $\eta_1$.

\item Proceeding inductively, for $n \ge 1$, define
\begin{equation}
\tau_{2n+1} := \inf\{t \ge \tau_{2n} : X^{(-, \vartheta- \varepsilon/2,B^{(2n+1)})}_{t-\tau_{2n}} \notin (-\infty, \vartheta]\},
\end{equation}
if $\tau_{2n}<\infty$, and if $\tau_{2n+1}<\infty$ then
\begin{equation}
\tau_{2n+2} := \inf\{t \ge \tau_{2n+1} : X^{(+, \vartheta,B^{(2n+2)})}_{t-\tau_{2n+1}} \notin [\vartheta-\varepsilon/2, \infty)\},
\end{equation}
where for each $j \in \mathbb{N}$, 
$ B^{(j+1)}_t := B_{t+\tau_j} - B_{\tau_j}$.
\end{enumerate}

The sequence $(\tau_n)_{n\ge0}$ is strictly increasing and, as shown in the proof of Proposition~\ref{prop:weak_pasting}, cannot accumulate in finite time. 
Hence, for any finite horizon $T\in[0,+\infty)$, only finitely many $\tau_n$ fall below $T$ almost surely:
\begin{equation}
    N(T):=\max\{j\ge0:\tau_j\le T\}<+\infty \quad \text{$\mathbb{P}$-a.s.}
\end{equation}

\textit{Step 2: Construction of the local solution pieces.}
We now construct the sequence of processes $(Y^{(n)})_{n \ge 1}$ on the corresponding time intervals:
\begin{equation}
Y^{(1)}_t := X^{(-,x,B^{(1)})}_{t-\tau_0}, 
\quad t \in [\tau_0, \tau_1],
\qquad \text{and} \qquad 
Y^{(2)}_t := X^{(+, \vartheta,B^{(2)})}_{t-\tau_1}, 
\quad t \in [\tau_1, \tau_2],
\end{equation}

and, in general, for \(n \ge 1\),

\begin{equation}
Y^{(2n+1)}_t := X^{(-, \vartheta-\varepsilon/2,B^{(2n+1)})}_{t-\tau_{2n}}, 
\quad t \in [\tau_{2n}, \tau_{2n+1}],
\end{equation}

\begin{equation}
Y^{(2n+2)}_t := X^{(+, \vartheta,B^{(2n+2)})}_{t-\tau_{2n+1}}, 
\quad t \in [\tau_{2n+1}, \tau_{2n+2}].
\end{equation}

An analogous construction applies when \(x \in [\vartheta, \infty) \cap \text{Interior}(I)\): in that case, \(\tau_1\) is defined as the first exit time of 
$X^{(+,x,B^{(1)})}$ from \((\vartheta-\varepsilon/2, \infty)\), 
and so on.

\textit{Step 3: Pasting the pieces together.}
We finally define the process \(X\) by concatenating the pieces \(\big(Y^{(j)}\big)_{j\ge1}\):
\begin{equation}
    X_t := \sum_{j = 0} Y^{(j)}_t\,\mathbf{1}_{[\tau_{j},\tau_{j+1})}(t),
\qquad t \ge 0.
\label{eq:sol:concat}
\end{equation}

By construction, and since each \(Y^{(j)}\) has continuous sample paths, the pieces match at the switching times:
for every \(n\ge1\),
\[
Y^{(2n)}_{\tau_{2n}} = Y^{(2n+1)}_{\tau_{2n}}= \vartheta-\varepsilon/2,
\qquad
Y^{(2n-1)}_{\tau_{2n-1}} = Y^{(2n)}_{\tau_{2n-1}} = \vartheta.
\]
Hence \(X\) has continuous sample paths.

Fix \(t\ge0\). As we mentioned above \(N(t)=\max\{j\ge0:\tau_j\le t\}\) is $\mathbb{P}$-a.s.\ finite. 
On each interval \([\tau_{j-1},\tau_j)\), the process \(X\) coincides with a strong solution to either
\eqref{eqn:SDE_1} or \eqref{eqn:SDE_2} driven by the corresponding shifted Brownian motion.
Using that \(b,\sigma\) agree with the relevant local coefficients \(\mathfrak{b}_\pm,\mathfrak{G}_\pm\) (and that Hypothesis~\hypref{hyp:pasting}
ensures consistency on the overlap), we may write the integral form of~\eqref{eqn:SDE_general} on each subinterval and sum over
\(j=0,\dots,N(t)\), obtaining the drift and stochastic integral terms up to time \(t\).

For the local-time drift, we use additivity over time intervals: for each fixed \(y\in\mathbb R\),
\[
    L_t^y(X)
    =\sum_{j=0}^{N(t)} \Big(L_{t\wedge\tau_{j+1}}^y(X)-L_{\tau_{j}}^y(X)\Big).
\]
Note that $t < \tau_{N(t)+1}$. 
Since the sum is finite and \(\nu\) is locally finite, we can integrate term-by-term:
\[
    \int_{\mathbb R}L_t^y(X)\,\nu(\mathrm{d}x)
    =
    \sum_{j=0}^{N(t)}\int_{\mathbb R}\Big(L_{t\wedge\tau_{j+1}}^y(X)-L_{t\wedge\tau_{j}}^y(X)\Big)\nu(\mathrm{d}y).
\]
On each subinterval $I_-$ and $I_+$, the path remains in the corresponding region, so only the restriction of \(\nu\) to that region contributes,
the overlap condition \eqref{eq:overlap_nu} in Hypothesis~\hypref{hyp:pasting} guarantees that the local-time term is consistent when \(X\) lies in
\((\vartheta-\varepsilon,\vartheta+\varepsilon)\).
Therefore the concatenated process \(X\) satisfies~\eqref{eqn:SDE_general} at time \(t\).

Finally, since each local strong solution is a measurable functional of the driving Brownian path on its interval,
each piece \(Y^{(j)}\) is adapted to \((\mathcal F^B_{\tau_{j-1}+s})_{s\ge0}\). Consequently,
the concatenated process \(X\) is adapted to the completed filtration generated by \(B\), and hence is a (global) strong solution
to~\eqref{eqn:SDE_general} for the given \(x\in\mathrm{Interior}(I)\).

\paragraph{Proof of the pathwise uniqueness.}
Let $X$ and $\tilde{X}$ be two solutions of the SDE~\eqref{eqn:SDE_general} with the same initial condition and Brownian motion $B$ on the same probability space. 
Without loss of generality, assume that $X_0 =\tilde X_0 < \vartheta$. 
We can construct sequences of stopping times as we did in the proof of the strong existence. Denote by $\eta_1$ (resp.\ $\tilde{\eta}_1$) the first hitting time of $X$ (resp.\ $\tilde{X}$) of the point $\vartheta$. 
Define $\eta_2$ (resp.\ $\tilde{\eta}_2$) as the first hitting time of $\vartheta-\varepsilon/2$ after time $\eta_1$ 
(resp.\ $\tilde{\eta}_1$). 
Subsequent stopping times are defined similarly by alternating between these two intervals.

Since $X_{\eta_1} = \tilde{X}_{\tilde{\eta}_1} = \vartheta$ $\mathbb{P}$-a.s., both processes satisfy the same SDE as $X^{(-,x,B)}$ up to the times $\eta_1$ and $\tilde{\eta}_1$. 
By strong existence and uniqueness for this SDE, we deduce that
\begin{equation}
\eta_1 = \tilde{\eta}_1 \quad \mathbb{P}-\text{a.s.}, 
\qquad 
X_{t\wedge\eta_1} = \tilde{X}_{t\wedge\tilde{\eta}_1},\quad t\ge0.
\end{equation}

From now on we identify the two stopping times and simply write $\eta_1$.

Define
\begin{equation}
B^{(1)}_t := B_{t+\eta_1} - B_{\eta_1}, \qquad t\ge0.
\end{equation}
By the strong Markov property of Brownian motion, $B^{(1)}$ is a Brownian motion with respect to the shifted filtration 
$(\mathcal{F}^B_{t+\eta_1})_{t\ge0}$, independent of $(B_t)_{t\le \eta_1}$.

Both shifted processes $(X_{t+\eta_1})_{t\ge0}$ and $(\tilde X_{t+\eta_1})_{t\ge0}$
solve SDE~\eqref{eqn:SDE_1} as $X^{(+,\vartheta,B^{(1)})}$ driven by $B^{(1)}$ with the same initial condition $\vartheta$.
By strong existence and pathwise uniqueness for this SDE, we deduce that
\begin{equation}
\eta_2 = \tilde{\eta}_2 \quad \mathbb{P}-\text{a.s.}, 
\qquad 
X_{t+\eta_1} = \tilde{X}_{t+\eta_1} \quad\text{on } [0,\eta_2-\eta_1].
\end{equation}

With the same argument used in the proof of the weak existence in Proposition~\ref{prop:weak_pasting} it can be shown that there is no accumulation of the stopping times. By induction, we can show that $X$ and $\tilde{X}$ are indistinguishable on $[0,T]$.

\subsection{Proof of Proposition~\ref{prop_uniqueness_thresh}}

The following proof is an extension of the result stated in~\cite[Theorem 1.3]{le2006one} or in~\cite[Theorem~IX.3.5.(iii)]{revuz2013continuous}.
Let us first recall the following useful result, whose proof is based on the occupation times formula and continuity properties of the right local time $x \mapsto \ell_t^x(X)$.

\begin{lemma}[{cf.,e.g.,\cite[Lemma~1.1]{le2006one}~\cite[Lemma IX.3.3]{revuz2013continuous}}]
\label{lemma:LT0}
Let $\epsilon>0$ be fixed and assume that $X$ is a continuous semimartingale such that for every $t>0$:
\begin{equation}
\int_0^t \frac{\mathds{1}_{(0,\epsilon)}(X_s)}{\varrho(X_s)}\vd\langle X,X\rangle_s <\infty \quad \mathbb{P}-\text{a.s.}
\label{A_t_finite}
\end{equation}
for a measurable function $\varrho \colon (0,\infty) \to (0,\infty)$ such that $\int_0^{\epsilon} 1/\varrho(x) \vd x =+\infty$.
Then the right local time $\ell_\cdot^0(X)=0$ $\mathbb{P}$-a.s.. 
\label{Lemme_local1}
\end{lemma}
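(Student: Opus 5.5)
The plan is the classical occupation-times argument of Le Gall and Revuz--Yor. I will use that \(a\mapsto \ell^a_t(X)\) is right-continuous at \(0\), and that \(1/\varrho\) is not integrable at \(0\).

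One point about the hypothesis must be settled first. The argument needs \(\int_0^\delta 1/\varrho(x)\vd x=+\infty\) for every \(\delta\in(0,\epsilon]\), i.e.\ divergence at the point \(0\). Divergence on \((0,\epsilon)\) alone does not suffice: the divergence could come from near \(\epsilon\), where the local time might vanish. I will read the hypothesis in this sense. This is how the lemma is used later, since the functions \(\rho\) in Hypothesis~\hypref{hyp:engel-schm:modif} and \ref{item_classic_cont:th_uniqueness} are non-integrable on every neighborhood of \(0\).

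First step: fix \(t>0\) and apply the occupation times formula for the right local time to the nonnegative Borel function \(a\mapsto \mathds{1}_{(0,\epsilon)}(a)/\varrho(a)\). This gives, \(\mathbb{P}\)-a.s.,
\[
\int_0^t \frac{\mathds{1}_{(0,\epsilon)}(X_s)}{\varrho(X_s)}\vd\langle X,X\rangle_s=\int_0^\epsilon \frac{\ell^a_t(X)}{\varrho(a)}\vd a .
\]
By \eqref{A_t_finite} the left-hand side is finite almost surely. Hence \(\int_0^\epsilon \ell^a_t(X)/\varrho(a)\vd a<\infty\) a.s.

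Second step: argue by contradiction on the event \(\{\ell^0_t(X)>0\}\). We work with the version of local time that is càdlàg in \(a\), so \(a\mapsto\ell^a_t(X)\) is right-continuous at \(0\). On this event there is a random \(\delta(\omega)\in(0,\epsilon)\) such that \(\ell^a_t(X)\ge \tfrac12\ell^0_t(X)>0\) for all \(a\in(0,\delta)\). Then
\[
\int_0^\epsilon \frac{\ell^a_t(X)}{\varrho(a)}\vd a\ \ge\ \tfrac12\ell^0_t(X)\int_0^\delta\frac{\vd a}{\varrho(a)}=+\infty ,
\]
which contradicts the first step. Hence \(\ell^0_t(X)=0\) a.s. for each fixed \(t\).

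Third step: intersect over rational \(t\) to get a single null set outside which \(\ell^0_t(X)=0\) for all rational \(t\). Since \(t\mapsto \ell^0_t(X)\) is continuous and nondecreasing, \(\ell^0_\cdot(X)\equiv0\) a.s.

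The only delicate point is the one flagged at the start: the divergence must be located at \(0\), in order to combine with right-continuity of the local time in the space variable. Right-continuity is also why the conclusion concerns the right local time \(\ell\) rather than the symmetric one \(L\).
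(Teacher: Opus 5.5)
Your proof is correct and follows the classical argument the paper relies on: it cites Le Gall and Revuz--Yor (Lemma IX.3.3) and points to the same two ingredients, namely the occupation times formula for the right local time and right-continuity of $a\mapsto\ell^a_t(X)$ at $0$. Your reading of the hypothesis as $\int_0^\delta 1/\varrho(x)\vd x=+\infty$ for every $\delta>0$ is the right one: it is the $\int_{0+}$ condition of the cited sources and is what the paper's application supplies, and your caveat is justified, since a Brownian motion reflected in $[0,\epsilon/2]$ paired with a $\varrho$ whose reciprocal diverges only near $\epsilon$ contradicts the literal statement.
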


Let $\epsilon \in (0, \varepsilon]$. Without loss of generality, we assume that $\rho$ satisfies $\int_0^{\epsilon} 1/\rho(x) \vd x =+\infty$. If this is not the case, then 
$\int_{-\epsilon}^0 1/\rho(x) \vd x =+\infty$
and the proof works the same, just by taking $X:= X^{(2)} -X^{(1)}$ instead of taking $X:= X^{(1)} -X^{(2)}$ in the following proof. 

We can reduce to the case $b$ and $\sigma$ are bounded.   
For $n\in \mathbb N$ an $i=1,2$, let $\tau^{(i)}_n$ denote $\tau_n^{(i)}:=\inf\{t\in [0,\infty) \colon |X^{(i)}|\geq n\}$ and let $\tau_n=\min \{\tau^{(1)}_n,\tau^{(2)}_n\}$.
Note that for all $n\in \mathbb N$ the processes $(X^{(1)}_{s\wedge \tau_n})_{s\in [0,t]}$ and $(X^{(2)}_{s\wedge \tau_n})_{s\in [0,t]}$ satisfy~\eqref{eqn:SDE_general} on the event $\{t<\tau_n\}$ and have bounded coefficients.  

Fix $m\in \mathbb N$ and let $K_m:=[-m,m]\setminus U$ which is compact. Let $X:=X^{(1)}-X^{(2)}$, we prove that, for every $t>0$, $\ell_{t\wedge \tau_m}^0(X)=0$ $\mathbb{P}$-a.s.~using Lemma \ref{lemma:LT0}.
We prove that
\begin{equation}
    \forall t>0, \quad A_t:=\int_0^{t\wedge \tau_m} \frac{\mathds{1}_{(0,\epsilon)}(X_s)}{\rho(X_s)}\vd\langle X,X\rangle_s <\infty\quad  \mathbb{P}-\text{a.s.}
\end{equation}

We can rewrite $A_t$ as the sum of two terms
\begin{equation}
 A_t^1:=\int_0^{t\wedge \tau_m}\frac{|\sigma(X^{(1)}_s)-\sigma(X^{(2)}_s)|^2}{\rho(X_s)} \mathds{1}_{(0,\epsilon)}(X_s)\mathds{1}_{\{ X^{(2)}_s, X^{(1)}_s \in U^{\varepsilon}\}} \vd s, 
\end{equation}
\begin{equation}
 A_t^2:=\sum_{i=1}^2\int_0^{t\wedge \tau_m}\frac{|\sigma(X^{(1)}_s)-\sigma(X^{(2)}_s)|^2}{\rho(X_s)} \mathds{1}_{(0,\epsilon)}(X_s) \mathds{1}_{[-m,m]\setminus U^{\varepsilon}} (X^{(i)}_s)\vd s.
\end{equation}
Note that
\begin{equation}
A_t^2 \leq A_t^3:=2\int_0^{t\wedge \tau_m} \frac{|\sigma(X^{(1)}_s)-\sigma(X^{(2)}_s)|^2}{\rho(X_s)} \mathds{1}_{(0,\epsilon)}(X_s) \mathds{1}_{K_m}(X^{(1)}_s) \mathds{1}_{K_m}(X^{(2)}_s)\vd s.
\end{equation}
It suffices to prove that for $i=1,2,3$ and for all $t>0$, $A_t^i<\infty$ $\mathbb{P}$-a.s.. 
Let us first fix an arbitrary $t>0$. By~\ref{item:hyp:1}, $A_t^1 \leq t <+\infty$ $\mathbb{P}$-a.s.. 
To show that $A_t^2<\infty$ $\mathbb{P}$-a.s., we instead show that $A_t^3<\infty$ $\mathbb{P}$-a.s.. 
From now on, we let $\delta<\epsilon$ be fixed (that later will tend to 0) and denote 
\begin{equation}
\Omega_{\delta}(s):=\{X^{(1)}_s, X^{(2)}_s \in K_m, X_s\in(\delta,\epsilon)\}.
\end{equation}
\ref{item:hyp:2} and dominated convergence ensure that 
\begin{equation}
    \mathbb{E}[A_t^{3}] \leq \lim_{\delta \to 0^+} M_{t,\delta}(f)
\end{equation}
with
\begin{equation}
    M_{t,\delta}(f):=\mathbb{E}\left[\int_0^{t\wedge \tau_m}\frac{f(X^{(1)}_s)-f(X^{(2)}_s)}{X_s}\mathds{1}_{\Omega_{\delta}(s)}\vd s\right].
\end{equation}
Without loss of generality we can consider $f$ as a function bounded and increasing from the entire $\mathbb{R}$. Let us consider a sequence $(f_n)_{n\geq 0}$ of uniformly bounded functions and such that, for all $n\in\mathbb{N}$, $f_n\in\mathcal{C}^1(\mathbb{R})$ and $f_n(x)\xrightarrow[n\to \infty]{}f(x)$ for $x$ in the set of continuity of $f$. As the set of discontinuity points of $f$ is negligible with respect to the Lebesgue measure, we easily prove that $M_{t,\delta}(f_n)\xrightarrow[n\to \infty]{}M_{t,\delta}(f)$.

Under the above notations, the fundamental theorem of calculus and Fubini ensure that 
\begin{equation}
    \begin{split}
    M_{t,\delta}(f_n) 
    = \mathbb{E}\left[\int_0^1 \int_0^{t\wedge \tau_m}  f_n'(Z^u_s)  \mathds{1}_{ \Omega_{\delta}(s)}\vd s \vd u\right]
    \end{split}
\end{equation}
where $Z^u_s=X^{(2)}_s+u(X^{(1)}_s-X^{(2)}_s)$.
For every $u\in [0,1]$ $Z^u_s$ satisfies:
\begin{equation}
    Z_s^u=Z_0^u+\int_0^s b^{(u)}_v \vd v+\int_0^s \sigma^{(u)}_v \vd B_v,
\end{equation}
with
\begin{equation}
    b^{(u)}_v := b(X^{(2)}_v) + u (b(X^{(1)}_v) - b(X^{(2)}_v)) \quad \text{and}
\end{equation}
\begin{equation}
    \sigma^{(u)}_v:= \sigma(X^{(2)}_v) + u (\sigma(X^{(1)}_v) - \sigma(X^{(2)}_v)) 
    = (1-u)\sigma(X^{(2)}_v)  + u\, \sigma(X^{(1)}_v).
\end{equation}

Assumption~\ref{item:hyp:2} 
ensures that, on the event $\Omega_{\delta}(s)$, 
$ \sigma^{(u)}_s \geq \inf_{x\in K_m} \sigma(x) =: \alpha \in (0,+\infty)$ and $ Z^u_s\in [-m,m]$. 

We now show that 
\begin{equation}
    \sup_{x\in [-m,m], u\in[0,1]}     \mathbb{E}\left[\ell_{t\wedge \tau_m}^x(Z^u)\right]
    \leq \sup_{x\in \mathbb{R}, u\in[0,1]} \mathbb{E}\left[\ell_{t\wedge \tau_m}^x(Z^u)\right] <\infty.
\end{equation}
First note that $\sup_{u\in [0,1]} \mathbb{E}\left[|Z^{u}_{t\wedge \tau_m}|\right]<+\infty$. 
Next, Tanaka formula implies that for all $x\in \mathbb{R}$, $u\in [0,1]$, it holds $\mathbb{P}$-a.s. that
\begin{equation}
    \ell^{x}_t(Z^{u}) 
    = |Z^{u}_t-x| - |Z^{u}_0-x| - \int_0^t \sgn(Z^{u}_s-x) b^{(u)}_s \vd s - \int_0^t \sgn(Z^{u}_s-x) \sigma^{(u)}_s \vd B_s,
\end{equation}
with $\operatorname{sgn}(x)=\mathds{1}_{x>0}-\mathds{1}_{x\leq 0}$.

Since $\sigma|_{[-m,m]}$ is bounded, then $\sigma^{(u)}$ is bounded, then the stochastic integral is a martingale of mean 0, hence for all $x\in \mathbb{R}$, $u\in [0,1]$ 
\begin{equation} \label{eq:bound:L}
    \begin{split} 
    \mathbb{E} \left[\ell^{x}_{t\wedge \tau_m}(Z^{u}) \right] 
    & \leq 
    \mathbb{E}\left[ |Z^{u}_{t\wedge \tau_m}-Z^{u}_0| \right] +  \int_0^{t\wedge \tau_m} \mathbb{E}\left[ | b^{(u)}_s | \right]\vd s 
    \\
    &    \leq \sup_{v\in [0,1]}   \mathbb{E}\left[ |Z^{v}_{t\wedge \tau_m}|  \right]  +m+ t \|b|_{[-m,m]}\|_\infty <+\infty.
    \end{split}
\end{equation}
Since the right-hand-side of the latter inequality does not depend on $u$ nor on $x$, we deduce the desired bound. 

Then, using the occupation times formula, we have:
\begin{equation} 
	\begin{split}
    M_{t,\delta}(f_n)
    & \leq 
    \mathbb{E}\left[\int_0^1 \int_0^{t\wedge \tau_m} f_n'(Z_s^u) \frac{(\sigma^{(u)}_s)^2}{(\sigma^{(u)}_s)^2} \mathds{1}_{\Omega_{\delta}(s)} \vd s \vd u\right]\\
    &  \leq \frac1{\alpha^2} \mathbb{E}\left[\int_0^1 \int_0^{t\wedge \tau_m} f_n'(Z_s^u) \mathds{1}_{[-m,m]}(Z^u_s) \vd\langle Z^u,Z^u\rangle_s \vd u\right]
    \\
    &\leq \frac1{\alpha^2} \int_0^1\mathbb{E}\left[\int_{-m}^m f_n'(x)\ell_{t\wedge \tau_m}^x(Z^u)\vd x\right]\vd u\leq C,
	\end{split}
\end{equation}
where $C$ is a strictly positive constant (not depending on $n$ nor on $\delta$). In the latter inequalities we used the fact that the local time is bounded and $\int_{-m}^m f_n'(x)\vd x = f_n(m)-f_n(-m) \leq 2 \|f_n\|_\infty<+\infty$. Thus, since $M_{t,\delta}(f) =\lim_n M_{t,\delta}(f_n)$, $M_{t,\delta}(f)$ is bounded by a constant (depending on $m$ but not on $\delta$). 
Therefore $\mathbb{E}[A_t^{3}]  <+\infty$. This implies that  $A_t^3<+\infty$ $\mathbb{P}$-a.s..

Thus, $A_t<\infty$ a.s, so Lemma \ref{Lemme_local1} ensures that $\mathbb{P}$-a.s.~for all $t>0$, $\ell_{t\wedge \tau_m}^0(X^{(1)}-X^{(2)})=0$. 
Since there is no explosion for $X^{(1)}$ nor $X^{(2)}$, $\tau_m$ tends $\mathbb{P}$-a.s.~to $+\infty$ when $m$ goes to infinity. 
We conclude that $\ell_{\cdot}^0(X^{(1)}-X^{(2)})=0$ $\mathbb{P}$-a.s.. 

The proof is thus completed.  

\subsection{Proof of Theorem~\ref{Theorem_uniqueness_thresh}} \label{ssec:app:proof:th:engeldrift}

\subsubsection{When $\nu\equiv 0$.}

The proof consists in a classical reasoning which implies that pathwise uniqueness holds, see, e.g.~\cite[Proposition 3.1]{revuz2013continuous}:
Combining Proposition~\ref{prop_uniqueness_thresh} and
Tanaka formula ensures that $X^{(1)}\vee X^{(2)}$ and $X^{(1)} \wedge X^{(2)}$ are two solutions to~\eqref{eqn:SDE_general} with $\nu\equiv 0$. The fact that uniqueness in law holds and continuity of paths implies that $X^{(1)}$ and $X^{(2)}$ are indistinguishable.

\subsubsection{When $\nu\not\equiv 0$.} \label{sec:proof:PU}

Let $I$ be the state space of the solution (unique in law). For simplicity, we keep denoting by $\nu,b,\sigma$ their restricted versions to $I$.

We denote by $\nu=\nu^{\mathrm{na}}+\nu^{\mathrm{at}}$, where $\nu^{\mathrm{at}}:=\sum_{a\in I}\nu(\{a\})\delta_a$
and $\nu^{\mathrm{na}}$ is atomless. Fix $x_*\in \text{Interior}(I)$ and define $g_\nu:I\to(0,\infty)$ by
\begin{equation}\label{eq:g_nu}
g_\nu(x):=
\begin{cases}
\exp\!\big(-2\,\nu^{\mathrm{na}}((x_*,x])\big)
\displaystyle\prod_{a\in(x_*,x]}\frac{1-\nu(\{a\})}{1+\nu(\{a\})},
& x\ge x_*,\\[1.1em]
\exp\!\big(2\,\nu^{\mathrm{na}}((x,x_*])\big)
\displaystyle\prod_{a\in(x,x_*]}\frac{1+\nu(\{a\})}{1-\nu(\{a\})},
& x< x_*,
\end{cases}
\end{equation}
(products over the atoms of $\nu$ in the interval). Define
\begin{equation}\label{eq:Phi_nu}
\Phi_\nu(x):=\int_{x_*}^x g_\nu(u)\,\vd u,\qquad x\in I,
\end{equation}
and set $J:=\Phi_\nu(I)$ and $\Psi_\nu:=\Phi_\nu^{-1}:J\to I$.

Moreover, for all $x\in J$, let 
\begin{equation}
    \widetilde b(x)=g_\nu \circ \Psi_\nu(x) \, b \circ \Psi_\nu(x)
\quad\text{   and   }\quad
    \widetilde \sigma(x)=g_\nu \circ \Psi_\nu(x) \, \sigma \circ \Psi_\nu(x).
\end{equation}

\begin{lemma} \label{lem:nu_coeff_new}
  If $(b,\sigma,\nu)$ satisfy assumption~\hypref{hyp:engel-schm:modif}, then $(\widetilde b, \widetilde \sigma)$, defined in Lemma~\eqref{lem:nu_removal}, satisfy the version of~\hypref{hyp:engel-schm:modif} with $\nu\equiv 0$.  
\end{lemma}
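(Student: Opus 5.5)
The plan is to check the two structural requirements of Hypothesis~\hypref{hyp:engel-schm:modif} for $\widetilde\sigma$ on $J$: the location of $E_{\widetilde\sigma}$ inside an open set, and conditions \ref{item:hyp:1}--\ref{item:hyp:2}. Both will be transferred from those of $\sigma$ through the bi-Lipschitz change of variables $\Phi_\nu$. The basic observation is that $g_\nu$ is locally bounded and locally bounded away from $0$ on $I$. This holds because $\nu$ is locally finite, so $\nu^{\mathrm{na}}((x_*,x])$ is locally bounded. It also uses $|\nu(\{a\})|<1$ together with $\sum_a|\nu(\{a\})|<\infty$ on compacts, which makes the infinite products converge to values in $(0,\infty)$, locally uniformly. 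Consequently $\Phi_\nu$ is strictly increasing, and on every compact $K\subset I$ both $\Phi_\nu$ and $\Psi_\nu$ are Lipschitz, with constants $c_K^{\pm1}$ where $c_K:=\sup_K g_\nu\vee(\inf_K g_\nu)^{-1}$. In addition $g_\nu$ is of locally bounded variation, being the product of an exponential of a BV function and a monotone-type product. Local boundedness of $\widetilde b,\widetilde\sigma$ is then immediate.

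\emph{Step 1 ($E_{\widetilde\sigma}$).} Change variables $y=\Phi_\nu(z)$, so that
\[
\int_{\Phi_\nu(\mathcal U)}\widetilde\sigma(y)^{-2}\vd y=\int_{\mathcal U} g_\nu(z)^{-1}\sigma(z)^{-2}\vd z .
\]
Since $g_\nu^{-1}$ is locally bounded above and below, this gives $E_{\widetilde\sigma}=\Phi_\nu(E_\sigma)$. We then take $\widetilde U:=\Phi_\nu(U\cap I)$, which is open in $J$ because $\Phi_\nu$ is a homeomorphism.

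\emph{Step 2 (\ref{item:hyp:2}).} Fix a compact $\widetilde K\subset J\setminus E_{\widetilde\sigma}$ and let $K=\Psi_\nu(\widetilde K)$, which is compact and avoids $E_\sigma$. Take $\varepsilon,\rho,f$ given for a slightly enlarged compact $K'\supset K$. Ellipticity passes over, since $\inf\widetilde\sigma\ge\inf_K g_\nu\,\inf_K\sigma>0$. For the modulus, write
\[
\widetilde\sigma(x)-\widetilde\sigma(y)=g(\bar x)\big(\sigma(\bar x)-\sigma(\bar y)\big)+\sigma(\bar y)\big(g(\bar x)-g(\bar y)\big),
\]
with $\bar x=\Psi_\nu(x)$, $\bar y=\Psi_\nu(y)$ and $g=g_\nu$. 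The first term is bounded by $C\rho(\bar x-\bar y)|f(\bar x)-f(\bar y)|/|\bar x-\bar y|$. For the second term, $g_\nu$ is BV, so we write it as a difference of increasing functions and use local boundedness to get $(g(\bar x)-g(\bar y))^2\le C|h(\bar x)-h(\bar y)|$ with $h$ increasing. To absorb this into the required form we choose $\widetilde\rho(r):=C(\rho(c r)\vee r)$ for suitable $c$. This is still non-integrable at $0$, since $\rho(cr)\vee r\le \rho(cr)+r$ and $1/(a\vee b)\ge\tfrac12\min(1/a,1/b)$; we still have to check this on the set where $r$ wins. We also take $\widetilde f:=C(f+h)\circ\Psi_\nu$, which is increasing, and then use that $|\bar x-\bar y|\asymp|x-y|$. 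The ratio $|f(\bar x)-f(\bar y)|/|x-y|$ is comparable up to constants, so \ref{item:hyp:2} holds with $\widetilde\varepsilon:=\varepsilon/c_{K'}$.

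\emph{Step 3 (\ref{item:hyp:1}).} On $\widetilde U^{\widetilde\varepsilon}$ the same splitting gives $(\widetilde\sigma(x)-\widetilde\sigma(y))^2\le 2C\rho(\bar x-\bar y)+2C(g(\bar x)-g(\bar y))^2$. This is the main obstacle. An atom of $\nu$ makes $g_\nu$ jump, so the second term does not vanish as $x-y\to0$, and no $\widetilde\rho$ integrable-divergent at $0$ can dominate it; there is also no $|f(x)-f(y)|/|x-y|$ factor available in \ref{item:hyp:1}. I would first try to exploit that near $E_\sigma$ we have $\sigma(\bar y)$ small: $\rho(0)$-type control forces $\sigma$ to be continuous on $U^\varepsilon$ at points where $\rho(0^+)=0$. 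I would then shrink $U$, and with it $\varepsilon$, so that $\nu$ has no atoms in $U^{\varepsilon}$, or so that $\sigma$ vanishes at the atoms lying there. On atomless parts $g_\nu$ is continuous BV, and one bounds the term by $C|\bar x-\bar y|\le C'\widetilde\rho$ after replacing $\rho$ by $\rho(\cdot)\vee|\cdot|$. If this fails, the fallback is to treat atoms inside $U^\varepsilon$ via \ref{item:hyp:2}-type control by moving them into the compact part. This step is where the statement may need an extra implicit assumption.
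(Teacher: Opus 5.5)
Your Step~3 cannot be completed, and your suspicion is correct. With no restriction on $\nu$ beyond \hypref{hyp:nu}, the statement is false, because \ref{item:hyp:1} is not stable under multiplication by the factor $g_\nu\circ\Psi_\nu$, which is only of bounded variation. The paper omits the proof, citing only $m_K\le g_\nu\le M_K$ and the bi-Lipschitz property of $\Phi_\nu,\Psi_\nu$. That handles the change of argument inside $\sigma$ (your Steps~1--2) but not this factor.

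Concretely, let $b\equiv0$, $\sigma(x)=\sqrt{|x|\wedge1}$ and $\nu=\sum_{n\ge2}2^{-n}\delta_{1/n}$. Then $E_\sigma=\{0\}$ and $(\sigma(x)-\sigma(y))^2\le|x-y|$ for all $x,y$, so \hypref{hyp:engel-schm:modif} holds with $U=(-1,1)$, $\rho(h)=|h|$, $f(x)=x$. The transformed equation is driftless with $\widetilde\sigma(y)^2\asymp y-\Phi_\nu(0)$ near $\Phi_\nu(0)$, which is an exit boundary. So a solution started in $(0,1)$ reaches $0$ with positive probability, hence $0\in I$ and $\Phi_\nu(0)\in E_{\widetilde\sigma}$. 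Every open $\widetilde U\ni\Phi_\nu(0)$ contains $y_n:=\Phi_\nu(1/n)$ for $n$ large. Since $\sigma$ is continuous and positive at $1/n$ and $g_\nu$ is right-continuous,
\[
\widetilde\sigma(y_n)-\widetilde\sigma(y_n-)=\sigma(1/n)\bigl(g_\nu(1/n)-g_\nu(1/n-)\bigr)=-\frac{2^{1-n}}{1+2^{-n}}\,\sigma(1/n)\,g_\nu(1/n-)=:-J_n\neq0 .
\]
Take $x,y$ on opposite sides of $y_n$ with $x-y=h$. Then any $\widetilde\rho$ satisfying \ref{item:hyp:1} on $\widetilde U^{\widetilde\varepsilon}$ has $\widetilde\rho(h)\ge J_n^2/2$ for all $0<|h|<h_n$, so $1/\widetilde\rho$ is integrable near $0$. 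Thus \ref{item:hyp:1} fails for every choice of $\widetilde U,\widetilde\varepsilon,\widetilde\rho$. The reduction in the proof of Theorem~\ref{Theorem_uniqueness_thresh} for $\nu\not\equiv0$ inherits this gap.

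Your fallbacks do not repair this in general.
\begin{itemize}
\item Shrinking $U$ cannot remove atoms that accumulate at $E_\sigma$. Nor can they be moved into the compact part, since \ref{item:hyp:1} must hold on a neighbourhood of all of $E_{\widetilde\sigma}$, while \ref{item:hyp:2} only concerns compacts disjoint from it.
\item Your claim that on atomless parts $(g_\nu(\bar x)-g_\nu(\bar y))^2\le C|\bar x-\bar y|$ is wrong. Continuity and bounded variation give no modulus: the increments are those of the distribution function of the singular measure $\nu^{\mathrm{na}}$. For a Cantor-type measure carried by a set of dimension $d<\tfrac12$, one has $\sup_x\nu((x,x+h])\ge c\,h^{d}$ for small $h$. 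Small copies of such a measure accumulating at $0$, where $\sigma>0$, break \ref{item:hyp:1} just as the atoms do.
\end{itemize}
A correct version needs an extra hypothesis keeping $\nu$ away from $E_\sigma$, for example $|\nu|(U^{2\varepsilon})=0$. Then $g_\nu$ is constant along every segment entering \ref{item:hyp:1}, and only a rescaling $h\mapsto g_0^2\rho(h/g_0)$ remains.

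Finally, Step~2 is structurally sound: the splitting, the BV decomposition of $g_\nu$, and the new increasing $\widetilde f$ all work. But it tacitly uses that $\rho$ is nondecreasing in $|r|$, which \hypref{hyp:engel-schm:modif} does not impose. You need this for two things:
\begin{itemize}
\item to bound $\rho(\bar x-\bar y)$ by $\rho(c(x-y))$;
\item to get $\int_{\mathcal V(0)}\bigl(\rho(cr)\vee|r|\bigr)^{-1}\vd r=\infty$.
\end{itemize}
For monotone $\rho$ the second follows by comparing consecutive dyadic blocks. For a general measurable $\rho$ it fails: take $\rho(r)=r^2$ on $\bigcup_n[2^{-n},2^{-n}(1+n^{-2})]$ and $\rho=+\infty$ elsewhere. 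Then $\int_0 1/\rho=\infty$, but $\int_0\min(1/\rho,1/r)\vd r<\infty$.
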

    The proof of Lemma~\ref{lem:nu_coeff_new} is a quite direct consequence of the following properties of $g_\nu, \Phi_\nu, \Psi_\nu$. It is therefore omitted. 
    Since $|\nu(\{x\})|< 1$ for all $x\in\mathbb R$, then $g_\nu$ is strictly positive and c\`adl\`ag on $I$,
    has locally bounded variation and is locally bounded. Moreover, for every compact $K\subset I$
    there exist constants $0<m_K\le M_K<\infty$ such that $m_K\le g_\nu\le M_K$ on $K$.
    Consequently $\Phi_\nu$ and $\Psi_\nu$ are bi-Lipschitz on compacts.

The following lemma is a direct consequence of It\^o-Tanaka formula and the properties of $\Phi_\nu$. The proof is therefore omitted. 
\begin{lemma}[Change of variables]\label{lem:nu_removal}
Let Hypothesis~\hypref{hyp:nu} and assume that $b$ and $\sigma$ are measurable and locally bounded.
For every $(X,B)$, weak non-explosive solution to~\eqref{eqn:SDE_general}, let $I$ denote the smallest interval such that~$X_t\in I$ for all $t\geq 0$ $\mathbb{P}$-a.s.. Then $(\Phi_\nu(X),B)$ is a weak non-explosive solution to the classical SDE
\begin{equation}\label{eq:SDE_no_nu}
Y_t=\Phi_\nu(x_0)+\int_0^t \widetilde b(Y_s)\,\vd s+\int_0^t \widetilde\sigma(Y_s)\,\vd B_s,
\qquad t\ge 0.
\end{equation}
Moreover $\Phi_\nu(X)\in J$ for all $t\geq 0$ $\mathbb{P}$-a.s..

Conversely, if $(Y,B)$ is a weak non-explosive solution to~\eqref{eq:SDE_no_nu} and $Y_t\in J$ for all $t\geq 0$ $\mathbb{P}$-a.s., then $(\Psi_\nu(Y),B)$ is a weak non-explosive solution to~\eqref{eqn:SDE_general} and $\Psi_\nu(Y_t)\in I$ for all $t\geq 0$ $\mathbb{P}$-a.s.. 
\end{lemma}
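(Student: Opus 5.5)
The plan is to apply the generalized It\^o--Tanaka formula to $\Phi_\nu(X)$, using that $\Phi_\nu$ is a difference of convex functions. Its left derivative is $g_\nu$, a strictly positive c\`adl\`ag function of locally bounded variation, so $\Phi_\nu''=\vd g_\nu$ in the distributional sense. I first compute this measure. On the atomless part, $\vd g_\nu = -2 g_\nu\,\nu^{\mathrm{na}}(\vd x)$. At an atom $a$ with $\beta_a:=\nu(\{a\})$, the jump is $g_\nu(a)-g_\nu(a-)=g_\nu(a-)\bigl(\tfrac{1-\beta_a}{1+\beta_a}-1\bigr)=-g_\nu(a-)\tfrac{2\beta_a}{1+\beta_a}$. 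Since $|\beta_a|<1$ by \hypref{hyp:nu}, the products are finite and positive.

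Next I write It\^o--Tanaka with the symmetric local time:
\[
\Phi_\nu(X_t)=\Phi_\nu(X_0)+\int_0^t \tfrac12\bigl(g_\nu(X_s-)+g_\nu(X_s)\bigr)^{\!*}\vd X_s+\tfrac12\int_{\mathbb R}L^x_t(X)\,\vd g_\nu(x),
\]
where the integrand is the symmetric derivative $\bar g_\nu$. This differs from $g_\nu$ only at the countably many atoms. I then substitute $\vd X_s = \vd V_s+b(X_s)\vd s+\sigma(X_s)\vd B_s$, with $V_t=\int L^x_t\,\nu(\vd x)$. The key step is to check that $\int_0^t \bar g_\nu(X_s)\,\vd V_s=\int L^x_t(X)\,\bar g_\nu(x)\,\nu(\vd x)$ cancels exactly against $\tfrac12\int L^x_t\,\vd g_\nu$.
\begin{itemize}
\item On the atomless part this is immediate, since $\bar g_\nu=g_\nu$ $\nu^{\mathrm{na}}$-a.e.
\item At an atom it reduces to $\bar g_\nu(a)\beta_a=\tfrac{g_\nu(a-)\beta_a}{1+\beta_a}$, which follows from $\bar g_\nu(a)=\tfrac{g_\nu(a-)}{2}\bigl(1+\tfrac{1-\beta_a}{1+\beta_a}\bigr)$.
\end{itemize}
To justify the identity $\int \bar g_\nu(X_s)\vd V_s=\int L^x_t\bar g_\nu\,\nu(\vd x)$, I approximate $\bar g_\nu$ by continuous functions and use the occupation-type representation of $V$. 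Alternatively, I can use that $\vd V$ is carried by $\{s: X_s\in\operatorname{supp}\nu\}$ via $\vd_s L^x_s$.

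Once the cancellation holds, the Lebesgue and Brownian terms are $g_\nu(X_s)b(X_s)$ and $g_\nu(X_s)\sigma(X_s)$. Here $\bar g_\nu$ can be replaced by $g_\nu$ in these terms because $X$ spends zero Lebesgue time at atoms. This holds for $\vd\langle X\rangle$ by the occupation formula. For $\vd s$, I would argue it on the set where $\sigma\neq0$, and on the set where $\sigma=0$ I would invoke the fundamental-solution convention. If needed, I would simply note that $g_\nu$ and $\bar g_\nu$ agree off a countable set, which is Lebesgue-null, and that $\vd s$-occupation of a single point forces $\sigma=0$ there. I expect this to be the delicate point. Writing $X=\Psi_\nu(Y)$ then yields \eqref{eq:SDE_no_nu}.

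Non-explosion and the range statement follow because $\Phi_\nu$ is a strictly increasing homeomorphism $I\to J$ that is bi-Lipschitz on compacts, so exit times of compacts correspond. For the converse, I apply It\^o--Tanaka to $\Psi_\nu(Y)$. Here $\Psi_\nu$ is again a difference of convex functions, with left derivative $1/g_\nu\circ\Psi_\nu$. The local time transforms as $L^{\Phi_\nu(x)}(Y)$ relates to $L^x(X)$ via the factor $\bar g_\nu$ in the symmetric sense. Reversing the computation above, the term $\tfrac12\int L^y_t(Y)\,\vd(1/g_\nu\circ\Psi_\nu)(y)$ reproduces $\int L^x_t(X)\,\nu(\vd x)$. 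The main obstacle is the precise bookkeeping of symmetric versus right local times at atoms, $L^a=\tfrac12(\ell^a+\ell^{a-})$, in this local-time transformation. I would handle it by working first with right local times and the right-continuous derivative, where the formulas are standard, and then converting at each atom.
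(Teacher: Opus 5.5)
Your route is the one the paper intends; it only says the lemma is a direct consequence of the It\^o--Tanaka formula and the properties of $\Phi_\nu$. Your computation of $\vd g_\nu$ is correct, and so is the cancellation of $\int_0^t \bar g_\nu(X_s)\,\vd V_s=\int_{\mathbb R} L^x_t(X)\,\bar g_\nu(x)\,\nu(\vd x)$ against $\tfrac12\int_{\mathbb R} L^x_t(X)\,\vd g_\nu(x)$; the support property of $\vd_s L^x_s$ is the clean way to get the first identity. (Since $g_\nu$ is c\`adl\`ag, it is the right, not the left, derivative of $\Phi_\nu$; this is harmless because you work with $\bar g_\nu$.)

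\textbf{Gap in the forward direction.} It is the step you flag yourself: replacing $\bar g_\nu$ by $g_\nu$ in the Lebesgue term. The occupation formula only gives $\sigma(a)^2\int_0^t \mathbf 1_{\{X_s=a\}}\vd s=0$. So the error $\sum_a \bigl(\bar g_\nu(a)-g_\nu(a)\bigr)\, b(a)\int_0^t \mathbf 1_{\{X_s=a\}}\vd s$, summed over the atoms of $\nu$, is not controlled. Lebesgue-nullity of the atoms says nothing about occupation times, and the fundamental-solution convention is unavailable because the lemma concerns every weak solution. In this generality the step is actually false, so no argument can close it.

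Take sticky Brownian motion $X_t=\int_0^t \mathbf 1_{\{X_s\neq 0\}}\vd B_s$ with $\int_0^t \mathbf 1_{\{X_s=0\}}\vd s = L^0_t(X)$. It solves \eqref{eqn:SDE_general} with $\nu=\tfrac12\delta_0$, $b=-\tfrac12\mathbf 1_{\{0\}}$, $\sigma=\mathbf 1_{\mathbb R\setminus\{0\}}$, which satisfy all hypotheses of the lemma. With $x_*<0$ one has $g_\nu=1$ on $(-\infty,0)$ and $g_\nu=\tfrac13$ on $[0,\infty)$. It\^o--Tanaka then gives $\Phi_\nu(X_t)=\Phi_\nu(X_0)-\tfrac13 L^0_t(X)+\int_0^t\widetilde\sigma(Y_s)\,\vd B_s$, whereas $\int_0^t \widetilde b(Y_s)\,\vd s=-\tfrac16 L^0_t(X)$.

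So the statement needs an extra assumption, for instance $b(a)\int_0^t\mathbf 1_{\{X_s=a\}}\vd s=0$ for every atom $a$. This holds if $\sigma(a)\neq 0$. It also holds if both one-sided local times of $X$ at $a$ vanish, by the identity $(1-\nu(\{a\}))\,\ell^a_t(X)-(1+\nu(\{a\}))\,\ell^{a-}_t(X)=2b(a)\int_0^t\mathbf 1_{\{X_s=a\}}\vd s$, which follows from \cite[Theorem~VI.1.7]{revuz2013continuous}. Alternatively, define $\widetilde b$ with $\bar g_\nu$ in place of $g_\nu$.

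\textbf{Gap in the converse.} The symmetric local time does not transform with the factor $\bar g_\nu$ at an atom. What holds is $\ell^{\Phi_\nu(x)}_t(Y)=g_\nu(x)\,\ell^x_t(X)$ and $\ell^{\Phi_\nu(x)-}_t(Y)=g_\nu(x-)\,\ell^{x-}_t(X)$, and at a skew point $\ell^a_t(X)\neq\ell^{a-}_t(X)$. The jump of $1/g_\nu\circ\Psi_\nu$ at $\Phi_\nu(a)$ therefore produces a combination of $\ell^a_t(X)$ and $\ell^{a-}_t(X)$. Identifying it with $\nu(\{a\})L^a_t(X)$ requires a relation between the two one-sided local times, namely $g_\nu(a)\ell^a_t(X)-g_\nu(a-)\ell^{a-}_t(X)=2\int_0^t\mathbf 1_{\{Y_s=\Phi_\nu(a)\}}\widetilde b(Y_s)\,\vd s$. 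This is \cite[Theorem~VI.1.7]{revuz2013continuous} applied to $Y$. Your ``conversion at each atom'' needs exactly this, and the proposal does not supply it.

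Once it is used, the computation closes precisely when $b(a)\int_0^t\mathbf 1_{\{X_s=a\}}\vd s=0$. Otherwise $\Psi_\nu(Y)$ carries the extra drift $-\nu(\{a\})\,b(a)\int_0^t\mathbf 1_{\{X_s=a\}}\vd s$, unless $\widetilde b$ is built from $\bar g_\nu$. This matches the obstruction in the forward direction.
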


\begin{remark}[Transfer of existence and uniqueness results]\label{rem:transfer_wellposed}
The map $x\mapsto \Phi_\nu(x)$ is a deterministic strictly increasing bijection between
$\mathcal C([0,\infty),I)$ and $\mathcal C([0,\infty),J)$ and so $x\mapsto \Psi_\nu(x)$ from $\mathcal C([0,\infty),J)$ and $\mathcal C([0,\infty),I)$. 
Hence weak existence, non-explosion, uniqueness in law, pathwise uniqueness and strong existence transfer
between \eqref{eqn:SDE_general} (on $I$) and \eqref{eq:SDE_no_nu} (on $J$).
\end{remark}

\begin{proof}[Proof of Theorem~\eqref{Theorem_uniqueness_thresh}]
Let $(X^{(1)},B^{(1)})$ and $(X^{(2)},B^{(2)})$ be two weak solutions to~\eqref{eqn:SDE_general}.
Since uniqueness in law holds, the respective state spaces coincide, and we denote them by $I$. Note that $I\subseteq \mathbb{R}$ because of non-explosion.  
As above, one can construct $\Phi_\nu$, $\Psi_\nu$, 
and coefficients $\widetilde \nu\equiv 0$, $\widetilde b$, and $\widetilde \sigma$ which satisfy Hypothesis~\hypref{hyp:engel-schm:modif} by
Lemma~\ref{lem:nu_coeff_new}.
By Lemma~\ref{lem:nu_removal}, assumption~\hypref{hyp:pathwise} is also satisfied. 
Since uniqueness in law holds for~\eqref{eqn:SDE_general}, then it also holds for~ \eqref{eq:SDE_no_nu}, and Theorem~\ref{Theorem_uniqueness_thresh} for $\nu\equiv 0$ ensures that $\Phi_\nu(X^{(1)})$ and $\Phi_\nu(X^{(2)})$ are indistinguishable. Remark~\ref{rem:transfer_wellposed}, $X^{(1)}=\Psi_\nu \circ \Phi_\nu (X^{(1)})$ and 
$X^{(2)}=\Psi_\nu \circ \Phi_\nu (X^{(2)})$ are indistinguishable. 
The proof is thus completed.
\end{proof}

\subsection{Proof of Theorem~\ref{Theorem_comparison}}

\subsubsection{When $\nu\equiv 0$.}
This proof follows a similar local-time argument as in Proposition~\ref{prop_uniqueness_thresh},
as in \cite[Theorem~1.4]{le2006one}.
    
Assume that the coefficients $b_1$, $b_2$, $\sigma$ are bounded and also assume that $b_1$ is Lipschitz with Lipschitz constant $K_b$.
	
Let us first note that following the proof of Proposition~\ref{prop_uniqueness_thresh} we can prove that $\ell^0_t(X^{(1)}_t-X^{(2)}_t) =0$. The only difference consists in the drift coefficient of $Z^u$ which is 
\begin{equation}
    		b^{(u)}_\cdot := b_2(X^{(2)}_\cdot) + u (b_1(X^{(1)}_\cdot) - b_2(X^{(2)}_\cdot)).
\end{equation}
But this does not change any argument of the proof. 
	
By It\^o-Tanaka formula, and the fact that $\ell^0_t(X^{(2)}_t-X^{(1)}_t) =0$, we have that 
\begin{equation}
        X_t:=(X^{(2)}_t-X^{(1)}_t)^+:=\max\{0, X^{(2)}_t-X^{(1)}_t\}
\end{equation}
satisfy $\mathbb{P}$-a.s.~for all $t\geq 0$: 
	\begin{equation}
		X_t
		:= X_0
		+ \int_0^t \ind{X_s \geq 0} ( b_2(X^{(2)}_s) - b_1(X^{(1)}_s) ) \vd s 
		+ \int_0^t \ind{X_s \geq 0} ( \sigma(X^{(2)}_s) - \sigma(X^{(1)}_s) ) \vd B_s. 
	\end{equation}
	Note that for all $s\geq 0$   
	\begin{equation}
		\ind{X_s \geq 0} ( b_2(X^{(2)}_s) - b_1(X^{(1)}_s) ) 
		\leq  \ind{X_s \geq 0} ( b_1(X^{(2)}_s) - b_1(X^{(1)}_s) ) 
		\leq K_b X_s.
	\end{equation}
	Since $\sigma$ is bounded, the stochastic integral has null expectation. 
	Observe that $\mathbb{P}(X_0 =0)=1$. 
	Therefore, for all $t\geq 0$
	\begin{equation}
		\mathbb{E}[ X_t ] 
		\leq
		 K_b \int_0^t \mathbb{E}[ X_s ] \vd s.		
	\end{equation} 
	Since the coefficients are bounded,  $ \mathbb{E}[ X_t ] <\infty$ for all $t>0$, and Gronwall lemma yields $\mathbb{E}[ X_t ]  = 0$, which implies that $\mathbb{P}$-a.s., $X_t =0$. 

	If $b_2$ is Lipschitz, then we use that $b_2 \leq b_1$ to obtain 
	\begin{equation}
		 b_2(X^{(2)}) - b_1(X^{(1)}) 
		\leq   b_2(X^{(2)}_s) - b_2(X^{(1)}_s).
	\end{equation}
	
	The assumption of boundedness of the coefficients can be removed via a localization argument. 
	The proof is thus completed.  

\subsubsection{When $\nu\not\equiv 0$.}
    Let $I^{(i)}$ denote the state space of $X^{(i)}$. If $I^{(1)}$ and $I^{(2)}$ do not intersect, then the statement is trivial. 
    Denote by $x^{*}$ a point in the intersection of the state spaces and consider the transformations in Section~\ref{sec:proof:PU} with $I=I^{(1)}\cup I^{(2)}$.  
    Let us now consider $Y^{(i)}=\Phi_\nu(X^{(i)})$.
    By Lemma~\ref{lem:nu_coeff_new} 
    and Lemma~\ref{lem:nu_removal}, $Y^{(i)}$ is a weak non-explosive solution to~\eqref{eq:SDE_no_nu} with coefficients $\widetilde b_i = b_i\circ \Psi_{\nu} \cdot g_{\nu} \circ \Psi_{\nu}$ and $\widetilde \sigma = \sigma \circ \Psi_{\nu} \cdot g_{\nu} \circ \Psi_{\nu}$ satisfying hypothesis~\hypref{hyp:engel-schm:modif}.
    The assumptions of our comparison results in the case $\nu\equiv 0$ are satisfied, hence $\mathbb{P}$-a.s.~for all $t>0$,  $Y^{(1)}_t \geq Y^{(2)}_t$. Since $\Psi_\nu$ is strictly increasing, $\mathbb{P}$-a.s.~for all $t>0$, $X^{(1)}_t=\Psi_{\nu}(Y^{(1)}_t) \geq \Psi_{\nu}(Y^{(2)}_t) = X^{(2)}_t$.

\section{Supplementary results on T-CKLS}
\label{sec:app:CKLS}

In this appendix, considering the T-CKLS as a toy example, we illustrate the
standard method for weak existence and uniqueness based on a reduction to a driftless SDE described in Appendix~\ref{sec:preliminary} and also
Theorem~\ref{Theorem_uniqueness_thresh}.
This provides an alternative route to the pasting technique exploited in
Section~\ref{sec:application} for weak and strong existence and uniqueness. 
In addition, we provide a construction via time change of (reflected) Brownian motion. 

We begin by recalling the definition of the T-CKLS process.
Let $x_0>0$, $\theta>0$, the T-CKLS process $X$ is defined as the solution of the following SDE:
\begin{equation}
X_t = x_0 + \displaystyle \int_0^t \bigl(a(X_s) - b(X_s) X_s\bigr) \mathrm{d}s
    +  \displaystyle \int_0^t \sigma(X_s)\mathrm{d}B_s.
\label{eq:T-CKLS_app}
\end{equation}

The drift and diffusion coefficients are given by
\begin{equation}
a(x)-b(x)x 
=
\begin{cases}
a_+ - b_+ x, & \text{if } x \ge \theta,\\[4pt]
a_- - b_- x, & \text{if } x < \theta,
\end{cases}
\qquad
\sigma(x)
=
\begin{cases}
\sigma_+ x^{\gamma_+}, & \text{if } x \ge \theta,\\[4pt]
\sigma_- x^{\gamma_-}, & \text{if } x < \theta,
\end{cases}
\end{equation}
where $a_-, \sigma_-, \sigma_+$ are strictly positive constants,
$\gamma_- \in [1/2,1]$, and $\gamma_+ \in [0,1]$.

Once weak existence, uniqueness in law, and non-explosion have been established
for \eqref{eq:T-CKLS_app}, strong existence and uniqueness follow by
applying either Theorem~\ref{thm:pasting} or
Theorem~\ref{Theorem_uniqueness_thresh}.
While the former is used in the main body of the paper, we illustrate here the latter by verifying Hypothesis~\hypref{hyp:pathwise} (weak existence, uniqueness in law, and non-explosion) and Hypothesis~\hypref{hyp:cond:bsigma} (and therefore~\hypref{hyp:engel-schm:modif}). Non-explosion and Hypothesis~\hypref{hyp:cond:bsigma} are addressed in Section~\ref{sec:C1}, while weak existence and uniqueness are treated in Section~\ref{sec:alter_weak_ckls}. In alternative to the proof provided in Section~\ref{sec:alter_weak_ckls}, Weak existence, uniqueness in law, and non-explosion can be established via Proposition~\ref{prop:weak_pasting} as in Section~\ref{sec:application}. 
In what follows, we also prove that the solution to \eqref{eq:sticky_skew_CKLS} is non-negative (and in some cases strictly positive), that it is a strong Markov process, and we provide a construction via a time change of (reflected) Brownian motion (see Remark~\ref{Remark:C7}).

\subsection{Preliminary results on the T-CKLS model}\label{sec:C1}

Before analyzing the weak well-posedness, we establish a basic structural property
satisfied by any weak solution to~\eqref{eq:T-CKLS_app}.
We show that the process does not explode and cannot accumulate time at the boundary point~$0$,
and that its right local time at~$0$ vanishes. 
This fact plays a crucial role because it is related to the \emph{fundamental solution} of the associated driftless SDEs (see Appendix~\ref{sec:preliminary}).

\begin{prop}[Zero occupation and vanishing local time at $0$]
\label{prop:zero_time}
Let $X$ be any (weak) solution to the SDE~\eqref{eq:T-CKLS_app}. Then, for all $t>0$,
almost surely,
\begin{equation}
\int_0^t \mathds{1}_{\{0\}}(X_s)\,\mathrm{d}s = 0
\qquad\text{and}\qquad
\ell_t^{0}(X) = 0.
\label{eq:no_time}
\end{equation}
\end{prop}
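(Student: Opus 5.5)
The plan is to obtain both identities in \eqref{eq:no_time} from two facts: first that the right local time at $0$ vanishes, via Lemma~\ref{lemma:LT0}; then that the jump of $x\mapsto \ell^x_t(X)$ at $0$ is controlled by the time spent at $0$, with the strictly positive drift $a_->0$ at $0$ forcing that time to be zero. Throughout, $X$ is a continuous semimartingale with martingale part $\int_0^\cdot \sigma(X_s)\vd B_s$ and finite variation part $V_t=\int_0^t (a(X_s)-b(X_s)X_s)\vd s$. If needed, one localizes with $\tau_n=\inf\{t\colon |X_t|\ge n\}$, which only replaces $t$ by $t\wedge\tau_n$. I read the diffusion coefficient for $x<\theta$ as $\sigma_-|x|^{\gamma_-}$, so that $\sigma(0)=0$.

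\emph{Step 1: $\ell^0_\cdot(X)=0$.} Fix $\epsilon\in(0,\theta)$ and take $\varrho(x):=x^{2\gamma_-}$ on $(0,\infty)$. Since $\gamma_-\ge 1/2$, we have $2\gamma_-\ge1$ and hence $\int_0^\epsilon \varrho(x)^{-1}\vd x=+\infty$. On $\{X_s\in(0,\epsilon)\}$ we have $\vd\langle X,X\rangle_s=\sigma_-^2 X_s^{2\gamma_-}\vd s$, and therefore
\begin{equation}
\int_0^t \frac{\mathds 1_{(0,\epsilon)}(X_s)}{\varrho(X_s)}\vd\langle X,X\rangle_s=\sigma_-^2\int_0^t \mathds 1_{(0,\epsilon)}(X_s)\vd s\le \sigma_-^2\, t<\infty .
\end{equation}
Lemma~\ref{lemma:LT0} then gives $\ell^0_\cdot(X)=0$ almost surely. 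This step uses exactly the hypothesis $\gamma_-\in[1/2,1]$.

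\emph{Step 2: zero occupation time at $0$.} I will use the classical identity for the c\`adl\`ag-in-space right local time (e.g.\ \cite[Theorem VI.1.7]{revuz2013continuous}):
\begin{equation}
\ell^0_t(X)-\ell^{0-}_t(X)=2\int_0^t \mathds 1_{\{X_s=0\}}\vd V_s=2\int_0^t \mathds 1_{\{X_s=0\}}\,a(0)\vd s = 2a_-\int_0^t \mathds 1_{\{0\}}(X_s)\vd s .
\end{equation}
Here $a(0)-b(0)\cdot 0=a_-$ because $0<\theta$. Note that no martingale contribution appears, since $\int_0^t\mathds 1_{\{X_s=0\}}\sigma(X_s)^2\vd s=0$ because $\sigma(0)=0$. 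Combining this with Step 1 gives $0\le \ell^{0-}_t(X)=-2a_-\int_0^t\mathds 1_{\{0\}}(X_s)\vd s\le 0$. Since $a_->0$, both sides vanish, which yields $\int_0^t \mathds 1_{\{0\}}(X_s)\vd s=0$ almost surely, and in addition $\ell^{0-}_t(X)=0$.

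The main obstacle is to set up Step 1 correctly. One must check that Lemma~\ref{lemma:LT0} applies with the degenerate coefficient, namely that $\varrho$ is chosen to cancel $\sigma^2$ exactly on $(0,\epsilon)$ and that $\epsilon<\theta$ so that only the lower regime matters. Once $\ell^0=0$ is known, Step 2 is a direct sign argument. Non-explosion is not needed beyond this localization, since all statements are for fixed finite $t$ and hold on each $\{t<\tau_n\}$.
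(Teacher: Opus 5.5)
Your proposal is correct and follows essentially the same route as the paper. The paper also obtains $\ell^0_t(X)=0$ from the occupation times formula, via $\int_0^\varepsilon \ell^y_t(X)\,\sigma_-^{-2}y^{-2\gamma_-}\,\mathrm{d}y=\int_0^t\mathds{1}_{(0,\varepsilon)}(X_s)\,\mathrm{d}s\le t$ for $\varepsilon\in(0,\theta)$, which is exactly the content of Lemma~\ref{lemma:LT0} that you invoke, and then applies the same sign argument to the identity $\ell^0_t(X)-L^{0^-}_t(X)=2a_-\int_0^t\mathds{1}_{\{0\}}(X_s)\,\mathrm{d}s$ from \cite[Theorem~VI.1.7]{revuz2013continuous} (one minor remark: the absence of a martingale contribution there does not need $\sigma(0)=0$, since $\int_0^t\mathds{1}_{\{X_s=0\}}\,\mathrm{d}\langle X,X\rangle_s=0$ holds for any continuous semimartingale by the occupation times formula).
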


\begin{proof}
We prove that any solution to~\eqref{eq:T-CKLS_app} necessarily satisfies
\eqref{eq:no_time}.

By~\cite[Theorem~VI.1.7]{revuz2013continuous}, we have almost surely
\begin{equation}
\ell_t^{0}(X) - L_t^{0^-}(X)
    = 2 a_- \int_0^t \mathds{1}_{\{0\}}(X_s)\,\mathrm{d}s,
\label{eq:local_occup}
\end{equation}
where $L_t^{0^-}(X)$ denotes the left local time,
$
L_t^{0^-}(X) = \lim_{h \to 0,\, h<0} \ell_t^{h}(X),
$ 
which is non-negative.

Using the occupation times formula (see
\cite[Corollary~VI.1.6]{revuz2013continuous}), for any fixed
$\varepsilon\in(0,\theta)$ we obtain almost surely
\begin{align}
t
&\ge \int_0^t \mathds{1}_{(0,\varepsilon)}(X_s)\,\mathrm{d}s 
= \int_0^t
   \frac{\mathds{1}_{(0,\varepsilon)}(X_s)}
        {\sigma_-^2 (X_s)^{2\gamma_-}}\,\mathrm{d}\langle X,X\rangle_s
 = \int_0^\varepsilon
        \frac{\ell_t^y(X)}{\sigma_-^2 y^{2\gamma_-}}\,\mathrm{d}y.
\end{align}

Since the integral on the right-hand side is finite, it follows that
$\ell_t^0(X)=0$ almost surely.
Plugging this into~\eqref{eq:local_occup} yields
\[
    -L_t^{0^-}(X)
    = 2 a_- \int_0^t \mathds{1}_{\{0\}}(X_s)\,\mathrm{d}s.
\]
The left-hand side is non-positive whereas the right-hand side is non-negative.
Thus both sides must vanish, proving~\eqref{eq:no_time}.
\end{proof}

The following non-explosion result is a direct application of Proposition~\ref{prop:no_explosion_linear}.

\begin{prop}
\label{prop:non_explosion}
Let $(X,B)$ be a (weak) solution to~\eqref{eq:T-CKLS_app}.
Then, almost surely, $X$ does not explode in finite time.
\end{prop}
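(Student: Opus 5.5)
The plan is to apply Proposition~\ref{prop:no_explosion_linear} directly to~\eqref{eq:T-CKLS_app}. The only work is to check that the piecewise coefficients satisfy a global linear growth bound, and to handle two side issues: $x_0$ is deterministic (hence in $L^2$), and $\sigma$ must be read with $|x|$ so that it is defined for all $x\in\mathbb{R}$.

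\emph{Drift.} For $x\ge\theta$ the drift is $a_+-b_+x$, and for $x<\theta$ it is $a_--b_-x$. Hence
\[
|a(x)-b(x)x|^2\le 2\max(a_+^2,a_-^2)+2\max(b_+^2,b_-^2)\,x^2 ,
\]
which is bounded by $K_1(1+x^2)$ for a suitable constant $K_1$.

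\emph{Diffusion.} For $x<\theta$ we have $|\sigma(x)|^2=\sigma_-^2|x|^{2\gamma_-}$ with $2\gamma_-\in[1,2]$. For $x\ge\theta$ we have $|\sigma(x)|^2=\sigma_+^2|x|^{2\gamma_+}$ with $2\gamma_+\in[0,2]$. In both cases the elementary inequality $|x|^{p}\le 1+|x|^2$ for $p\in[0,2]$ gives
\[
|\sigma(x)|^2\le \max(\sigma_+^2,\sigma_-^2)\,(1+x^2).
\]
Summing the two bounds yields the linear growth condition with $K:=K_1+\max(\sigma_+^2,\sigma_-^2)$.

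Since $X_0=x_0$ is deterministic, $X_0\in L^2$. Proposition~\ref{prop:no_explosion_linear} then gives $\tau_\infty=\infty$ almost surely for any (local) weak solution, which proves the claim. The proposition is stated for measurable coefficients and does not need continuity, so the discontinuity at $\theta$ causes no difficulty.

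The one point needing care is negative values of $x$. If $\sigma$ were defined only through $x^{\gamma_-}$, it would not be defined for $x<0$. I would handle this by reading $\sigma_-|x|^{\gamma_-}$, as in~\eqref{eq:drift_TCKLS}, which is what the bound above uses. Alternatively, one could note that the process is non-negative, using Proposition~\ref{prop:zero_time} together with the Tanaka formula applied to $X^-$.
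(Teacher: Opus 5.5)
Your proposal is correct and follows the paper's route: the paper states this proposition as a direct application of Proposition~\ref{prop:no_explosion_linear}. Your explicit checks supply the details the paper leaves implicit, namely the linear growth bound for the piecewise drift and diffusion (reading $\sigma$ as $\sigma_\pm|x|^{\gamma_\pm}$, consistent with \eqref{eq:drift_TCKLS}) and $X_0=x_0\in L^2$.
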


\begin{lemma}
Hypothesis~\hypref{hyp:cond:bsigma} are satisfied by the coefficients~\eqref{eq:drift_TCKLS}.
\end{lemma}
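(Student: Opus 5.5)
The plan is to check the three items of Hypothesis~\hypref{hyp:cond:bsigma} directly for the piecewise coefficients of \eqref{eq:T-CKLS_app}, with $\theta>0$ the threshold and $\nu\equiv 0$. I fix $\varepsilon\in(0,\theta/2)$. Local boundedness and measurability of $x\mapsto a(x)-b(x)x$ and of $\sigma$ are immediate, since each is affine or a power on each of the two pieces. One point needs care: for $x<0$ the expression $x^{\gamma_-}$ should be read as $|x|^{\gamma_-}$, as in \eqref{eq:drift_TCKLS}. By Proposition~\ref{prop:zero_time} and the non-negativity of solutions, only the behaviour on $[0,\infty)$ is really relevant, but the estimates below use $|x|^{\gamma_-}$ throughout.

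For \ref{item_ellip:th_uniqueness}, I observe that on $(\theta-\varepsilon,\theta+\varepsilon)\subset(\theta/2,3\theta/2)$ both branches are bounded below. Hence $\alpha:=\min\{\sigma_-(\theta/2)^{\gamma_-},\sigma_+(\theta/2)^{\gamma_+}\}>0$ works.

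For \ref{item_disc:th_uniqueness}, $\sigma$ is bounded on $[\theta-\varepsilon,\theta+\varepsilon]$, say by $M$. I then take $f(x):=4M^2\,\mathds{1}_{[\theta,\infty)}(x)$, which is bounded and increasing. If $(x-\theta)(y-\theta)\le0$ and $x\neq y$ lie on opposite sides, or one equals $\theta$ and the other lies below it, then $|f(x)-f(y)|=4M^2\ge|\sigma(x)-\sigma(y)|^2$. If both equal $\theta$, or one equals $\theta$ and the other is above, both points lie in $[\theta,\infty)$. In that case $f(x)=f(y)$ and the inequality would fail unless $\sigma(x)=\sigma(y)$. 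This is the one delicate case. To handle it I would modify $f$ by adding a Lipschitz increasing part: take $f(x):=4M^2\mathds{1}_{[\theta,\infty)}(x)+Cx$, where $C$ bounds $2M\cdot\mathrm{Lip}(\sigma_+x^{\gamma_+})$ on $[\theta,\theta+\varepsilon]$. This is finite since $x^{\gamma_+}$ is smooth away from $0$, and it covers same-side pairs via $|\sigma(x)-\sigma(y)|^2\le 2M|\sigma(x)-\sigma(y)|\le C|x-y|$.

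For \ref{item_classic:th_uniqueness}, on $I_2=[\theta,\infty)$ I use \ref{item_classic_disc:th_uniqueness}. There $\sigma\ge\sigma_+\theta^{\gamma_+}>0$ (constant if $\gamma_+=0$), and $g$ increasing locally bounded with $|\sigma(x)-\sigma(y)|^2\le|g(x)-g(y)|$ for $|x-y|<\varepsilon$ is obtained in the same way, namely $g(x)=\int_\theta^x 2\sup_{[u-\varepsilon,u+\varepsilon]\cap I_2}|\sigma\sigma'|\,\vd u$ or simply a locally Lipschitz choice, using that $\sigma$ is $C^1$ on $I_2$. On $I_1=(-\infty,\theta)$ ellipticity fails at $0$, so I use \ref{item_classic_cont:th_uniqueness} instead. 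Since $\gamma_-\in[1/2,1]$, the map $x\mapsto|x|^{\gamma_-}$ is $\gamma_-$-Hölder, giving
\[
\bigl||x|^{\gamma_-}-|y|^{\gamma_-}\bigr|^2\le|x-y|^{2\gamma_-}.
\]
Thus $\rho(h):=\sigma_-^2\max\{h,h^{2\gamma_-}\}$ (or $h$ when small) works. We have $\rho(h)\ge h$ up to a constant factor, which I absorb by replacing $\rho$ with $\max\{1,\sigma_-^2\}\max\{h,h^{2\gamma_-}\}$. Moreover $2\gamma_-\ge1$ gives $\int_{\mathcal V(0)}1/\rho=\infty$.

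The main obstacle is the precise reading of \ref{item_disc:th_uniqueness} for pairs touching $\theta$ from the same side, together with the bookkeeping of $\varepsilon$-neighbourhoods. The Lipschitz-plus-jump choice of $f$ should resolve this.
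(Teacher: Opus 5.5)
Your verification is correct. It follows the paper's overall plan of checking the items of $(\mathcal{T})$ one by one, but several of your choices differ from the paper's one-line proof, and in each case yours is the more robust one.

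The paper applies $(\mathcal{T}_{3,1})$ on \emph{both} regions, with $\rho(h)=\sigma_\pm |h|^{2\gamma_\pm}$, and disposes of $(\mathcal{T}_2)$ by noting that $\sigma$ is continuous and increasing on each side of $\theta$. On $[\theta,\infty)$ you use $(\mathcal{T}_{3,2})$ instead, exploiting that $\sigma$ is bounded below and Lipschitz there because $\theta>0$. This covers every $\gamma_+\in[0,1]$. The paper's literal choice $\rho(h)=\sigma_+h^{2\gamma_+}$ is integrable at $0$ when $\gamma_+<1/2$, so it violates the non-integrability requirement; repairing it amounts to your Lipschitz bound.

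On $(-\infty,\theta)$ both arguments rest on the $\gamma_-$-H\"older continuity of $|x|^{\gamma_-}$. Your $\rho(h)=\max\{1,\sigma_-^2\}\max\{h,h^{2\gamma_-}\}$ also carries the needed constant $\sigma_-^2$ and satisfies the side condition $\rho(h)\ge h$, which $\sigma_-|h|^{2\gamma_-}$ fails near $0$ when $\gamma_->1/2$.

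Your explicit $f=4M^2\mathbf{1}_{[\theta,\infty)}+Cx$ makes precise what the paper's monotonicity remark leaves implicit. It works for a jump of either sign at $\theta$, and for the pairs $(\theta,y)$ with $y>\theta$, which the condition $(x-\theta)(y-\theta)\le 0$ includes. You also check $(\mathcal{T}_1)$, which the paper does not mention.

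The paper's version buys brevity and a symmetric treatment of the two regions; yours buys validity over the full parameter range.
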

\begin{proof}
    Condition~\ref{item_classic_cont:th_uniqueness} holds with $\rho(x)=\sigma_- |x|^{2\gamma_-}$ and $\rho(x):=\sigma_+ |x|^{2\gamma_+}$. Condition~\ref{item_disc:th_uniqueness} is also readily verified, since the volatility is continuous and increasing on each side of the threshold, which is a sufficient condition to ensure the required property. 
\end{proof}

\subsection{Weak existence and uniqueness for the T-CKLS process}
\label{sec:alter_weak_ckls}

The argument relies on the following simple idea already sketched in Appendix \ref{sec:preliminary}. By Proposition~\ref{prop:non_explosion} and Proposition~\ref{prop:zero_time} we know that if there exists a weak solution to the T-CKLS SDE, $(X,B)$, then it is non-explosive and it spends no time at $0$, i.e.~it satisfies~\eqref{eq:no_time}. 
We consider a suitable space transformation $Y=S(X)$ which solves a driftless SDE~\eqref{eq:driftless_after_scale} with respect to the same Brownian motion with a suitable diffusion coefficient $\widehat \sigma$ depending on $b,\sigma$. 
Since $X$ does not spend any time at any point, $Y$ should be a \emph{fundamental solution} in the application of the results recalled in Appendix~\ref{sec:preliminary} 
to the driftless SDE~\eqref{eq:driftless_after_scale}.
This can be subtle because of the shape of $\widehat \sigma$, but the a-priori information on $X$, based on Proposition~\ref{prop:zero_time} and Proposition~\ref{prop:non_explosion}, helps. We illustrate this in the following paragraphs.

\paragraph{Reducing to a driftless SDE.}
For T-CKLS, the \emph{scale function} $S$ in~\eqref{eq:scale_def} has $S(x) =\int_\theta^x s(y) \vd y$ with
\begin{equation}
    s(x)=s_-(x):=\theta^{2a_-/\sigma_-^2} \exp\left( \frac{-2 b_-  \theta}{\sigma_-^2}\right) x^{-2a_-/\sigma_-^2} \exp\left( \frac{2 b_-  x}{\sigma_-^2}\right), \quad x\in (0, \theta),
\label{eq:scale_cir_0_theta}
\end{equation}
if $\gamma_-=1/2$, and if $\gamma_-\in (1/2,1)$ there exists a constant $C\in (0,+\infty)$ (depending on $\theta$) such that
\begin{equation}
    s(x)=C \exp\left( \frac{2a_-}{\sigma_-^2} \frac{x^{1-2\gamma_-}}{2\gamma_--1}  + \frac{b_-x^{2(1-\gamma_-)}}{\sigma_-^2 (1-\gamma_-)}\right), \quad x\in (0, \theta)
\end{equation}
and if $\gamma_-=1$ there exists a constant
$C\in (0,+\infty)$ (depending on $\theta$) such that 
\begin{equation}
    s(x)=C \exp\left( \frac{2a_-}{\sigma_-^2} \frac1x \right) x^{2{b_-}/{\sigma_-^2}} \quad x\in (0, \theta).
\end{equation}
Note that $S$ is strictly increasing. 
The behavior below $\theta$ depends only on the value of the coefficients below $\theta$ and the value above $\theta$, depends on the coefficients above the threshold $\theta$.  

Let us first consider the behavior above the threshold. 
$S_{+\infty} :=\lim_{x\to + \infty} S(x)$ exists in $(0,+\infty]$ and 
$S_{+\infty}=+\infty$ in the following cases
if $\gamma_+=1/2$ and either $b_+>0$ or [$b_+=0$ and $2 a_+\leq \sigma_+^2$] 
or $\gamma_+\in [0,1/2)$ and $b_+>0$ or [$b_+=0$ and $a_+\leq 0$],
or $\gamma_+\in (1/2,1)$ and $b_+\geq 0$,
or $\gamma_+=1$ and $ 2 b_+\geq - \sigma^2_+$. 
Otherwise, we have $S_{+\infty}<\infty$, that is 
        if $\gamma_+=1/2$ and either $b_+<0$ or [$b_+=0$ and $2 a_+> \sigma_+^2$] 
        or $\gamma_+\in [0,1/2)$ and $b_+<0$ or [$b_+=0$ and $a_+ > 0$],
        or $\gamma_+\in (1/2,1)$ and $b_+< 0$,
        or $\gamma_+=1$ and $ 2 b_+ < - \sigma^2_+$.

Let us now consider the behavior below the threshold. 
$S_0:= \lim_{x\to 0^+} S(x) = -\infty$ either if $\gamma_->1/2$ or if $\gamma_-=1/2$ and $\sigma_-^2 \leq 2 a_-$. 
In this case $S$ is a bijective function from $(0,+\infty)$ to $(-\infty,S_{+\infty}) \subseteq \mathbb{R}$. 
If $\gamma_-=1/2$ and $\sigma_-^2 > 2 a_-$, then $S_0:=\lim_{x\to 0^+} S(x)$ is finite (strictly negative), but $\lim_{x\to 0^-} S(x) =-\infty$. 
$S|_{[0,+\infty)}$ is a continuous, strictly increasing bijective function from $[0,+\infty)$ to $[S_0,S_{+\infty})$. %
In both cases, let $R$ denote the inverse of $S$. 
Note that $R$ is a strictly increasing and continuous function. 
Summarizing, $S$ is a strictly increasing continuous bijective function
\begin{enumerate}
    [label=\textbf{\upshape S\arabic*}, nolistsep]
    \item \label{item:case:n0pi} 
        from $(0,+\infty)$ to $\mathbb R$ 
        when $S_0=-\infty$ 
        and $S_{+\infty}=+\infty$.  
    \item \label{item:case:n0pf} 
    from $(0,+\infty)$ to $(-\infty, S_{+\infty})$ when $S_{+\infty} \in (0,+\infty)$ and $S_0=-\infty$.  
    \item \label{item:case:nipi} 
        from $[0,+\infty)$ to $[S_0, +\infty)$  
        when $S_0$ is finite and $S_{+\infty}=+\infty$. 
    \item \label{item:case:nipf}
        from $[0,+\infty)$ to $[S_0, S_{+\infty})$
        when $S_0$ and $S_{-\infty}$ are finite. 
\end{enumerate}

We now determine the diffusion coefficient $\widehat \sigma$ of the driftless SDE in~\eqref{eq:driftless_after_scale}. Assume that there exists a weak solution to~\eqref{eq:T-CKLS_app} which is non-negative, and let $Y := S(X)$, then $Y$ is solution to
\begin{equation}
    Y_t=\int_0^t \sigma(R(Y_t)) {|R(Y_t)|^{\gamma_r(R(Y_t))}} s\circ R(Y_t)dB_t.
    \label{Y_scale}
\end{equation}
Note that, although the introduction of $Y$ as a fundamental solution to~\eqref{Y_scale} has been done via a space transformation $S$ of $X$, nothing obliges that any (fundamental) solution to~\eqref{Y_scale} has state space contained in $[S_0,S_{+\infty})$. 
The criteria in Appendix~\ref{sec:preliminary}, are based on the sets $N_\sigma$ and $E_\sigma$ of Section \ref{sec:preliminary}.
\begin{equation}
    N_\sigma = \{ x\in \mathbb{R} \colon |R(x)|^{\gamma_r(R(x))} s \circ R(x)=0 \}
\end{equation}
and, with o.s.c.~meaning {\it open set containing}, 
\begin{equation}
    \begin{split}
    E_\sigma 
    & = \left\{ x\in \mathbb{R} \colon \int_{R(\mathcal{U}(x))} \frac1{
    |y|^{2\gamma_r(y)} s(y)} \vd y =+\infty \ \forall \mathcal{U}(x) \text{ o.s.c. } x\right\}.
    \end{split}
\end{equation} 
We now show that 
$E_{\sigma}= \{S_{+\infty}\}\cap \mathbb{R} \subseteq N_\sigma \subseteq \{S_0,S_{+\infty}\}\cap \mathbb{R}$, so in the ``worst case scenario'' we are in case~\ref{item:es:diff0}, hence there exists a unique (in law) fundamental solution to~\eqref{Y_scale}.

\paragraph{Weak existence.}

We deal with existence by providing a non-negative weak solution. 
We have just justified existence of a unique (in law) fundamental solution to~\eqref{Y_scale}, say $(Y,B)$. 
We now justify that this solution is in the domain of $R$. Then one obtains a solution to the T-CKLS $(X:=R(Y),B)$ SDE~\eqref{eq:T-CKLS_app}, by It\^o-Tanaka formula. 

We now see that in the two cases~\ref{item:case:n0pi} and \ref{item:case:n0pf} the solution to~\eqref{Y_scale} is unique and it is fundamental. In these cases, we obtain a positive weak solution to~\eqref{eq:T-CKLS_app}.  
In the two cases~\ref{item:case:nipi} and \ref{item:case:nipf} we now show that there is a unique (in law) fundamental solution and we construct such a solution $(Y,B)$. We can then prove that $Y$ takes values in the domain of $R$, thanks to the a-priori information on the solutions to~\eqref{eq:T-CKLS_app}. So $X_\cdot:=R(Y_\cdot)$ yields  a non-negative solution to~\eqref{eq:T-CKLS_app}. 

{\it Consider \ref{item:case:n0pi}}, then $N_\sigma=E_\sigma=\emptyset$, that corresponds to~\ref{item:es:0}.  
Let us consider $Y$ the unique (fundamental) weak solution to~\eqref{Y_scale}. 
Thus, existence of weak solutions to~\eqref{eq:T-CKLS_app} is ensured. Moreover since $R$ takes values in $(0,+\infty)$, $R(Y)$ is strictly positive.

{\it Consider \ref{item:case:n0pf}}. 
In this case $N_\sigma=E_\sigma=\{S_{+\infty}\}$, which corresponds to~\ref{item:es:equal}. 
Hence, weak existence and uniqueness in law hold for~\eqref{Y_scale}. 
Note that $\tau:=\inf\{t>0 \colon Y_t \geq S_{+\infty}\} = \inf\{t >0 \colon R(Y_t) = +\infty\}$. 
For all $t\geq 0$, $R(Y)$ satisfies~\eqref{eq:T-CKLS_app} on the event $\{t<\tau\}$. 
Proposition~\ref{prop:non_explosion} ensures that $\mathbb{P}(\tau<\infty)=0$. 
Hence, we just proved existence of solutions to~\eqref{eq:T-CKLS_app}. 
Moreover, $R(Y)$ is strictly positive and $S(R(Y))=Y$ (since we are in case~\ref{item:es:equal}) is (the unique) solution to~\eqref{Y_scale} and the state space of $Y$ is contained in $(-\infty, S_{+\infty})$. 

\begin{remark}
As a step in the above proof, we checked that $S_{+\infty}$ does not belong to the state space of $Y$ by using non-explosion of solutions to~\eqref{eq:T-CKLS_app}. 
Alternatively, one can show that $S_{+\infty}$ is a non-exit point for $Y$, e.g., via Feller boundary classification in order to prove that $S_{+\infty}$ is not in the state space.
\end{remark}

{\it Consider \ref{item:case:nipi}}. 
Note that 
$N_\sigma=\{S_0\}$ if $4 a_- <\sigma^2_-$, otherwise $N_\sigma=\emptyset$
and 
\begin{equation}
   E_\sigma=\{x\in \mathbb{R} \colon  \forall \varepsilon>0, \ \int_{(x-\varepsilon, x+\varepsilon)} {
     |y|^{-(1-2 a_-/\sigma_-^2)}} \vd y =+\infty\}=\emptyset.
\end{equation}

Then, $\emptyset=E_\sigma \subseteq N_\sigma$, that is either case~\ref{item:es:0} or ~\ref{item:es:diff}.

{\it Consider \ref{item:case:nipf}}. 
Note that $E_\sigma=\{S_{+\infty}\}$ and either $N_\sigma=\{S_0,S_{+\infty}\}$ if $4 a_- <\sigma^2_-$, or  $N_\sigma=\{S_{+\infty}\}$. If $4 a_- <\sigma^2_-$ we are in case~\ref{item:es:diff0}, otherwise~\ref{item:es:equal}.

\begin{remark}
    In the latter two cases, we cannot just apply $R$ to the unique fundamental solution to~\eqref{Y_scale}, $(Y,B)$, because we have no knowledge of its state space. We could only establish weak existence and uniqueness for equation~\eqref{eq:T-CKLS_app} up to the first hitting time of 0. To overcome this limitation we construct $Y$ as a transformation of a time-changed (reflected) Brownian motion. 
\end{remark}

\paragraph{Construction of the fundamental solution to~\eqref{Y_scale} in cases~\ref{item:case:nipi} and~\ref{item:case:nipf} as time change of a reflected Brownian motion.}
Let $Z$ be a standard Brownian motion and let $W:=|Z+ S(X_0)-S_0|$ a reflected Brownian motion starting at $S(X_0)-S_0$. Note that the latter quantity is positive since $S$ is increasing. By Tanaka formula 
$W$ satisfies 
$W_t = S(X_0)-S_0 + \int_0^t \sgn(Z_s+S(X_0)-S_0) \vd Z_s + L^{S_0-S(X_0)}_t(Z)$.
Note that $\hat{B}_t:=\int_0^t \sgn(Z_s+ S(X_0)-S_0) \vd Z_s$ is a standard Brownian motion. 
Hence $W_t= S(X_0)-S_0 + \hat{B}_t + L^{S_0-S(X_0)}_t(Z)$.
Let $(\mathcal{F}_t)_t$ be the completed natural filtration associated to $W$. 

Recall that in case~\ref{item:case:nipi} $S_{+\infty}=+\infty$ and in case~\ref{item:case:nipf} $S_{+\infty}<+\infty$. 
Let $\tau:=\inf\{s\geq 0 \colon W_s=S_{+\infty}-S_0\}$ and, for $t<\tau$, let  
\begin{equation}
    A_t:=\int_0^t \ind{[S_0,S_{+\infty}]}(W_s+S_0) \rho(R(W_s +S_0)) \vd s
    =\int_0^t \rho(R(W_s +S_0)) \vd s,
\end{equation}
where 
$\rho = (\sigma s)^{-2}$. 

\emph{We show that $A_t \ind{[0,\tau)}(t)<+\infty$ is well defined and $A_t$ is $\mathbb{P}$-a.s.~finite for every $t\in [0,\tau)$.}
Note that $R$ is defined on $[S_0,S_{+\infty})$, and $\tau$ ensures that for $t<\tau$, the integrand is well defined and $A_t$ is well defined on $[0,+\infty]$ because the integrand is non-negative. 
In the next lines we show that $\mathbb{P}$-a.s.~$A_t \ind{[0,\tau)}(t)<+\infty$.
By applying the occupation times formula, we obtain
\begin{align}
    A_t\mathds{1}_{[0,\tau)}(t)&\leq \ind{[0,\tau)}(t)\int_0^t \rho(R(W_s +S_0)) \,\mathrm{d}s \\
    &\leq  \ind{[0,\tau)}(t) \int_0^{S_{+\infty}-S_0} \rho(R(x+S_0))L_t^x(W)\,\mathrm{d}x\\
    &\leq \ind{[0,\tau)}(t) \int_{0}^{+\infty} \frac{L_t^{S(x)-S_0}(W)}{\sigma^2(x)s(x)}\,\mathrm{d}x
    \\
    & = \int_{0}^{\theta} \frac{L_t^{S(x)-S_0}(W)}{\sigma^2(x)s(x)}\,\mathrm{d}x +  \ind{[0,\tau)}(t) \int_{\theta}^{+\infty} \frac{L_t^{S(x)-S_0}(W)}{\sigma^2(x)s(x)}\,\mathrm{d}x.
\end{align}
 Using~\eqref{eq:scale_cir_0_theta}, together with assumptions~\ref{item:case:nipi}-\ref{item:case:nipf} we can show that the first summand on the right-hand-side of the latter equation is $\mathbb{P}$-a.s.~finite.  
 Let us focus on the second summand. Note that for every $t\in [0,\tau)$, we have $\max_{s\in [0,t]}W_s+S_0 \in [S_0,S_{+\infty})$ and so $\bar x:=R\left(\max_{s\in [0,t]}W_s+S_0\right)\in [0,+\infty)$. Therefore, $L_t^{S(x)-S_0}(W)= 0$ for all $x>\bar x$. If $\bar x \leq \theta$ we are done, otherwise, we are reduced to check if 
 $\int_{\theta}^{\bar x} L^{S(x)-S_0}(W)/(\sigma^2(x)s(x)) \vd x$ is $\mathbb{P}$-a.s.~finite which is the case because $\sigma$ is bounded on $[\theta,\bar x]$, $s$ is continuous on $[\theta,\bar x]$ and 
\begin{equation}
    \int_r^{+\infty} L_t^{S(x)-S_0}(W) \vd x \leq \int_{0}^{+\infty} L_t^{y}(W) \vd y = t <\infty
\end{equation}
(the last equality follows from the occupation times formula). 

\emph{We introduce the time-change associated to $A$.} 
Note that $(A_t)_{t\in [0,\tau)}$ is continuous and strictly increasing, hence we can define $A_{\tau}:=\lim_{s\to \tau-} A_s\in [0,+\infty]$. 
$A_\tau$ is $(\mathcal{F}_t)_{t\geq 0}$-adapted. 
Its right inverse $\tau_t:=\inf\{s \geq 0 \colon A_s >t\}$ is strictly increasing and continuous as a (random) function from $[0,A_{\tau})$ to $[0,\tau)$, hence it is a time-change, see, e.g., \cite[Definition V.1.2]{revuz2013continuous}. 
Moreover, $\tau_{A_{\tau}^-}=\inf\{s>0 \colon A_s \geq A_\tau\}=\tau$ 
    and for all $t\in [0,\tau)$ we have $\tau_{A_t}=t$ and, if $t<A_{\tau}$, then there exists $s \in [0,A_\tau)$ such that $t<A_s$, so $\tau_t <\tau_{A_s}=s < A_\tau$, i.e.~$(\tau_t)_{t\in [0,A_{\tau})}$ is $\mathbb{P}$-a.s.~finite time-change.

In conclusion $(\tau_t)_{t\in [0,A_{\tau})}$ is a strictly increasing and continuous $\mathbb{P}$-a.s.~finite time-change taking values in $[0, \tau)$.  
Note that, up to considering the time-change $\tilde\tau_t:=\tau_{t\wedge A_\tau}$, we can consider $(\tau_t)_{t\geq 0}$ taking values in $[0,\tau]$, instead of $(\tau_t)_{t\in [0,A_\tau)}$. 

\emph{We are now going to show that the process defined by $\tilde{Y}_t:= W_{\tau_t}+S_0$ for $t< A_\tau$, solves~\eqref{Y_scale} and that $A_\tau=+\infty$ while constructing a solution to~\eqref{eq:T-CKLS_app}.}

By~\cite[Exercice VI.1.17]{revuz2013continuous} it holds $\mathbb{P}$-a.s.~that $L_t^0(W)=2L_t^{S_0-S(X_0)}(Z)$ for all $t \in (0,+\infty)$. 
By~\cite[Exercice VI.1.27]{revuz2013continuous}, $ L_t^{S_0}(\tilde{Y}) =L_{\tau_t}^0(W)$ $\mathbb{P}$-a.s.~for all $t\geq 0$.

Let us show that $\mathbb{P}$-a.s., for $t\geq 0$, we have 
$\hat{B}_{\tau_t} = \int_0^t  \sigma(R(\tilde{Y}_s)) s(R(\tilde{Y}_s)) \vd \hat{B}_s$. 
To do so, note that for all $t\geq 0$ $\hat{B}_t= \int_0^t \vd \hat{B}_s$ and so 
$
    \hat{B}_{\tau_t}=\int_0^{\tau_t}  \vd \hat{B}_s
$. 
This is an $(\mathcal{F}^{\hat{B}}_{\tau_{t}})_t$-local martingale \cite[Proposition V.1.5.i)]{revuz2013continuous} 
with quadratic variation 
\begin{equation}
    \tau_{t} = \int_0^{\tau_{t}} \vd s 
    = \int_0^{t\wedge A_\tau} \tau_s' \vd s 
    = \int_0^{t\wedge A_\tau} \frac1{A'_{\tau_s}} \vd s 
    = \int_0^t \ind{[0,A_\tau)}(s) \rho^{-1}(R(W_{\tau_s}+S_0)) \vd s.
\end{equation}
The martingale representation theorem (e.g.,~\cite[Proposition V.3.8]{revuz2013continuous} ensures that there exists an $(\mathcal{F}^{\hat{B}}_{\tau_t})_{t<A_\tau}$ Brownian motion $B$ such that 
\begin{equation}
    \hat{B}_{\tau_t} 
        = \int_0^t \rho^{-1/2}(R(W_{\tau_s}+ S_0))\vd B_s
\quad \text{hence} \quad 
    \hat{B}_{\tau_t} =\int_0^t  \sigma(R(\tilde{Y}_s)) s(R(\tilde{Y}_s)) \vd B_s.
\end{equation}
Therefore, we have that $\mathbb{P}$-a.s.~for all $t<A_\tau$,  $\tilde{Y}_t=W_{\tau_t} +S_0$ and
\begin{equation}
    \tilde{Y}_t = S(X_0) + \int_0^t  \sigma(R(\tilde{Y}_s)) s(R(\tilde{Y}_s)) \vd B_s + \frac12 L^{S_0}_{t}(Y).
\end{equation} 
Let $X_t:=R(\tilde{Y}_t)=R(W_{\tau_t} +S_0)$ for $t<A_\tau$ and by It\^o formula we get that $\mathbb{P}$-a.s.~for $t<A_\tau$
\begin{align}
    X_t 
    & = X_0
    + \int_0^t  R'(\tilde{Y}_t) \sigma(R(\tilde{Y}_s)) s(R(\tilde{Y}_s)) \vd B_s 
    + R'(S_0) \frac12 L^{S_0}_{t}(\tilde{Y}) \nonumber
    \\
    & \qquad +\frac12 \int_0^t R''(\tilde{Y}_t) \sigma^2(R(\tilde{Y}_s)) s^2(R(\tilde{Y}_s)) \vd s,
    \label{eq:X_transfo}
\end{align}
Since $\lim_{x\to 0^+}  R'(S(x))= \lim_{x\to 0^+}  1/{s(x)} =0$, \eqref{eq:X_transfo} is \eqref{eq:T-CKLS_app}. It remains to prove that $A_\tau=+\infty$ $\mathbb{P}$-a.s.
Note that for all $t\in [0,A_{\tau})$ we have $X_{A_t} =R(\tilde{Y}_{A_t})=R(W_{\tau_{A_t}} +S_0)= R(W_t +S_0)$ and $X_{A_\tau} 
    = \lim_{t\to \tau-} R(W_t +S_0)
    = R_{S_{+\infty}}=+\infty$.
Then $A_\tau=+\infty$ $\mathbb{P}$-a.s.~otherwise this would contradict Proposition~\ref{prop:non_explosion}.
Existence of a weak solution to~\eqref{eq:T-CKLS_app} is then proven. 
Note that $X$ is non-negative by construction.
By Proposition~\ref{prop:zero_time}, we know that $\mathbb{P}$-a.s.~$L^0_t(X)=0$ and~\eqref{eq:no_time} holds. 

Hence $Y_t=S(X_t)= S\circ R(\tilde{Y}_t)=\tilde{Y}_t$ solves~\eqref{Y_scale}, takes values in $[S_0, S_{+\infty})$ and it is a fundamental solution.

\begin{remark}
    In our proof of the case~\ref{item:case:nipf}, since we constructed the solution, we do not need to proceed similarly to the case~\ref{item:case:n0pf} as it concerns dealing with $S_{+\infty}$. 
\end{remark}

\paragraph{Uniqueness in law.}
We now discuss uniqueness in law. 
Assume that there are two solutions to~\eqref{eq:T-CKLS_app}, $(X^{(1)},B^{(1)})$ and $(X^{(2)},B^{(2)})$ on two different probability spaces. 
By Proposition~\ref{prop:zero_time}, they $\mathbb{P}$-a.s.~spend 0 time at 0. 
Let $\tau_i$ be the first time $X^{(i)}$ exits from $[0,+\infty)$. 
On the two different probability spaces, $(S(X^{(i)}_s))_{s<\tau_i}$ are fundamental solutions to \eqref{Y_scale} and $\tau_i=\infty$ since the fundamental solution to~\eqref{Y_scale} never goes below $S_0$ 
(in cases~\ref{item:case:n0pi}-\ref{item:case:n0pf} $S_0=-\infty$). 
Therefore the two solutions to~\eqref{eq:T-CKLS_app} are such that $S(X^{(1)})$ and $S(X^{2})$ are equal in law, by uniqueness in law to~\eqref{Y_scale} taking values in the domain of definition of $R$. 
Thus, $X^{(1)} \equiv R\circ S(X^{(1)})$ and $X^{(2)}\equiv R\circ S(X^{2})$ are equal in law as well.

\begin{remark}[Strong Markov Property]
The unique fundamental solution $Y$ to~\eqref{Y_scale} enjoys the strong Markov property (see Section~\ref{sec:preliminary}, Remark~\ref{rem:smarkov_general}). 
The space transformation of $Y$, $X$ solution to~\eqref{eq:T-CKLS_app}, satisfies it as well.
By uniqueness in law, all weak solutions to \eqref{eq:T-CKLS_app} share this property. 
In the cases~\ref{item:case:nipi}-\ref{item:case:nipf}, the solution $X$ to~\eqref{eq:T-CKLS_app} has been constructed by transformation of time-change of reflected Brownian motion (and so of a Brownian motion). 
In the cases~\ref{item:case:n0pi}-\ref{item:case:n0pf}, we can do a similar construction as time-change of a Brownian motion or we can deduce it by the results recalled in Remark~\ref{rem:smarkov_general}, after having proved weak existence and uniqueness (in law). 
\label{Remark:C7}
\end{remark}

\begin{remark}[Non negativity of the solution]
In cases~\ref{item:case:nipi}-\ref{item:case:nipf} we have constructed a non-negative solution and in the cases~\ref{item:case:n0pi}-\ref{item:case:n0pf} the solution is strictly positive. 
By uniqueness in law, all weak solutions to \eqref{eq:T-CKLS_app} share this property.
\end{remark}

\paragraph{Acknowledgments.} 
The author Sara Mazzonetto would like to thank the Isaac Newton Institute for Mathematical Sciences, Cambridge, for support and hospitality during the programme \emph{Stochastic systems for anomalous diffusion} where work on this paper was undertaken. This work was supported by EPSRC grant no EP/Z000580/1.


\end{document}